\documentclass[a4paper,11pt]{amsart}
\usepackage[foot]{amsaddr}
\usepackage{geometry}
\relpenalty=9999
\binoppenalty=9999
\usepackage{amsfonts}
\usepackage{amsthm}
\usepackage{amssymb}
\usepackage{amsmath}
\usepackage{theoremref}
\usepackage{enumerate}
\usepackage{bbm}
\usepackage{bm}
\usepackage{mathtools}
\usepackage{a4wide}
\usepackage{xcolor}
\usepackage{lipsum}
\usepackage{tikz-cd}
\usepackage{MnSymbol}
\usepackage{dsfont} 

\makeatletter
\newcommand{\proofpart}[2]{%
  \par
  \addvspace{\medskipamount}%
  \noindent\emph{Step #1: #2}\par\nobreak
  \addvspace{\smallskipamount}%
  \@afterheading
}
\makeatother

\DeclarePairedDelimiter\abs{\lvert}{\rvert}%
\DeclarePairedDelimiter\norm{\lVert}{\rVert}%

\makeatletter
\let\oldabs\abs
\def\abs{\@ifstar{\oldabs}{\oldabs*}}
\let\oldnorm\norm
\def\norm{\@ifstar{\oldnorm}{\oldnorm*}}
\makeatother

\makeatletter
\g@addto@macro\bfseries{\boldmath}
\makeatother

\newcommand{\A}{\mathcal{A}}

\newcommand{\Dy}{\mathcal{D}}

\newcommand{\N}{\mathcal{N}}
\newcommand{\T}{\partial\mathbb{D}}
\newcommand{\G}{\mathcal{G}}

\newcommand{\z}{\zeta}

\newcommand{\conj}[1]{\overline{#1}}
\newcommand{\D}{\mathbb{D}}

\newcommand{\cD}{\conj{\mathbb{D}}}

\renewcommand{\Dy}{\mathcal{D}}

\renewcommand\Re{\operatorname{Re}}

\newtheorem{thm}{Theorem}[section]
\newtheorem{lemma}[thm]{Lemma}

\newtheorem{cor}[thm]{Corollary}
\newtheorem{prop}[thm]{Proposition}
\theoremstyle{definition}

\newtheorem{definition}{Definition}[section]

\begin{document}
\title{\textbf{Shift invariant subspaces in growth spaces and sets of finite entropy}}

\author{Adem Limani} 
\address{Departament de Matem\`atiques, Universitat Autònoma de Barcelona, Barcelona 08193}
\email{AdemLimani@mat.uab.cat}
\date{\today}

\maketitle

\begin{abstract} \noindent
We investigate certain classes of shift invariant subspaces in growth spaces on the unit disc of the complex plane determined by a majorant $w$, which include the classical Korenblum growth spaces. Our main result provides a complete description of shift invariant subspaces generated by Nevanlinna class functions in growth spaces, where we show that they are of Beurling-type. In particular, our result generalizes the celebrated Korenblum-Roberts Theorem. It turns out that singular inner functions play the decisive role in our description, phrased in terms of certain $w$-entropy conditions on the carrier sets of the associated singular measures, which arise in connection to boundary zero sets for analytic functions in the unit disc having modulus of continuity not exceeding $w$ on the unit circle. Furthermore, this enables us to establish an intimate link between shift invariant subspace generated by inner functions and the containment of the above mentioned analytic function spaces in the corresponding model spaces.

\end{abstract}

\section{Introduction}\label{Sec:Intro} 
\subsection{Basic notions}
Let $w$ be a weight, that is, a continuous non-decreasing function defined on the unit-interval $[0,1]$ with $w(0)=0$ and denote by $\G_w$ the corresponding growth space of analytic functions $g$ in the unit disc $\D$ satisfying
\[
\lim_{|z| \to 1-} w(1-|z|) g(z) = 0.
\]
The space $\G_w$ is obtained by taking the closure of analytic polynomials in the norm 
\[
\norm{g}_{\G_w} := \sup_{z \in \D } w(1-|z|)|g(z)| < \infty,
\]
hence equipped with the above norm, the set $\G_w$ becomes a separable Banach space for which analytic polynomials form a dense linear span. Typical examples of growth spaces are the classical Korenblum growth spaces, which correspond to weights of the form $w(t) = t^{\alpha}$ with $\alpha >0$. For an extensive treatment of these spaces, we refer the reader to the survey in \cite{hedenmalmbergmanspaces}. We denote by $H^\infty$ the algebra of bounded analytic functions in $\D$ equipped with the supremum norm $\norm{f}_{H^\infty}= \sup\left\{\abs{f(z)}: z\in \D \right\}$. It is not difficult to verify that $H^\infty$ is the multiplier algebra of $\G_w$, that is, $fg \in \G_w$ for any $g\in \G_w$ precisely when $f \in H^\infty$. Let $\mathcal{N}$ denote the Nevanlinna class of analytic functions $f$ in $\D$ satisfying
\[
\sup_{0<r<1} \int_{\T} \log^+ \abs{f(r\zeta)} dm(\zeta) < +\infty,
\]
where $dm$ denotes the unit-normalized Lebesgue measure on the unit-circle $\T$. It is a standard fact that any function $f$ belonging to $\mathcal{N}$ enjoys the unique Herglotz-Nevanlinna factorization $f= \mathcal{O}_f (S_{\mu_1}/S_{\mu_2})B_{\Lambda} $, where $\mathcal{O}_f$ is the \emph{outer} factor of $f$ defined by 
\[
\mathcal{O}_f (z) = \exp \left( \int_{\T} \frac{\zeta+z}{\zeta-z} \log \abs{f(\zeta)} dm(\zeta) \right), \qquad z\in \D
\]
where $\mu_k$, $k=1,2$ are mutually singular positive finite Borel measures on $\T$, both singular wrt $dm$ and induce corresponding \emph{singular inner functions} given by
\[
S_{\mu_k}(z) = \exp \left( - \int_{\T} \frac{\zeta+z}{\zeta-z} d\mu_k(\zeta) \right) \qquad z\in \D,\qquad k=1,2,
\]
and $B_{\Lambda}$ denotes the Blaschke product 
\[
B_{\Lambda}(z) = e^{i\gamma} \prod_{\lambda\in \Lambda} \frac{\abs{\lambda}}{\lambda}\frac{\lambda-z}{1-\conj{\lambda}z}, \qquad z\in \D,
\]
corresponding to the \emph{Blaschke sequence} $\Lambda \subset \D$ which  encodes the zeros of $f$ in $\D$ satisfying
\[
\sum_{\lambda \in \Lambda} (1-|\lambda|) < \infty,
\]
and $\gamma$ is a real-valued constant. A bounded analytic function $\Theta$ in $\D$ with unimodular radial limits $\lim_{r\to 1-} \abs{\Theta(r\zeta)}=1$ at $dm$-a.e every point $\zeta \in \T$ is said to be an \emph{inner function} and all inner functions are of the form $\Theta=B_{\Lambda}S_{\mu}$ for some Blaschke sequence $\Lambda$ and positive finite measure $\mu$ singular wrt to $dm$ on $\T$. For sake of abbreviation, we reserve the above notations the Herglotz-Nevanlinna factors in $\mathcal{N}$ and their corresponding factors, and we refer the reader to the survey in \cite{cauchytransform} for further information on this theory. Given a function $g$ analytic in $\D$ we denote by $M_z$ the \emph{shift operator} $M_z(g)(z) = zg(z)$, which corresponds to shifting the Taylor coefficients of $g$ forward by one unit. A closed linear subspace $\mathcal{V}$ of a Banach space $X$ of analytic functions in $\D$ is said to be shift invariant (or $M_z$-invariant), if  $M_z \mathcal{V} \subseteq \mathcal{V}$. Let $H^2 \subset \mathcal{N}$ denote the Hardy spaces consisting of functions
\[
\norm{f}^2_{H^2} := \sup_{0<r<1} \int_{\T} \abs{f(r\zeta)}^2 dm(\zeta) < \infty.
\]
\subsection{Previous work}
The celebrated Beurling Theorem on the function theoretical description of shift invariant subspaces in $H^2$ asserts that any shift invariant subspace $\mathcal{V} \subseteq H^2$ is for the form 
\[
\mathcal{V} = \Theta_{\mathcal{V}} H^2 := \left\{ \Theta_{\mathcal{V}} f: f\in H^2 \right\}
\]
for a uniquely associated inner function $\Theta_{\mathcal{V}}$. Beurling's elegant description of shift invariant subspaces has been an exemplary guide towards investigating the structures of $M_z$-invariant subspaces on various other analytic function spaces, such as the Bergman spaces, Korenblum growth spaces and the Dirichlet spaces, for instance see \cite{korenblum1977beurling}, \cite{aleman1996beurling}, \cite{richter1992multipliers} and further sources therein. In the context of growth spaces, it is known that the structure of $M_z$-invariant subspaces in $\G_w$ is very complicated, as they can be of arbitrary large index, see \cite{borichev1998invariant}. 

Our principal goal here is to describe an important class of singly generated $M_z$-invariant subspaces in $\G_w$. To this end, we shall denote by $\left[ g \right]_{\G_w}$ the smallest closed $M_z$-invariant subspace containing the element $g$ in $\G_w$, which we refer to as the $M_z$-invariant subspace generated by $g$. A function $g$ is declared to be cyclic in $\G_w$ if $\left[ g \right]_{\G_w} = \G_w$. In other words, $g$ is cyclic in $\G_w$ if there exists a sequence of analytic polynomials $\{Q_n\}_n$ such that 
\[
\lim_n \norm{gQ_n -1}_{\G_w} = 0.
\]
Indeed, since the analytic polynomials are dense in $\G_w$, we have that $g$ is cyclic in $\G_w$ if and only if $1\in \left[ g\right]_{\G_w}$.  An often useful observation is that $\left[g\right]_{\G_w}$ actually equals the closure of $g H^\infty$ in the norm of $\G_w$, hence we may substitute analytic polynomials $\{Q_n\}_n$ as above by a sequence of bounded analytic functions. A complete description of shift invariant subspaces in Bergman spaces generated by functions in the Nevanlinna class was obtained in the work of H. Hedenmalm, B. Korenblum and K. Zhu in \cite{hedenmalm1996beurling}, see Theorem A. In the context of Korenblum growth spaces, their result can be phrased in the following way. Let $w(t)=t^\alpha$ with $\alpha>0$ be a majorant and $f\in \G_w \cap \mathcal{N}$ with Herglotz-Nevanlinna factorization $f= \mathcal{O}_f (S_\mu / S_\nu) B_{\Lambda}$. Then the $M_z$-invariant subspace in $\G_w$ generated by $f$ is of \emph{Beurling-type}, that is, it is uniquely determined by the Blaschke sequence $\Lambda$ encoding the zeros of $f$ in $\D$ and the positive singular Borel measure $\mu_f \leq \mu$, which is the part of $\mu$ carried on a countable union of Beurling-Carleson sets, that is, countable unions of closed sets $E\subset \T$ of Lebesgue measure zero satisfying
\[
\sum_k m(I_k) \log m(I_k) >- \infty
\]
where $\{I_k\}_k$ are the connected components of $\T \setminus E$. More precisely, the $M_z$-invariant subspace generated by $f$ equals $\left[ f\right]_{\G_w} = \left[B_{\Lambda} S_{\mu_f}\right]_{\G_w}$
and the shift invariant subspace generated by $BS_{\mu_f}$ satisfies the following characterizing \emph{permanence property}:
\begin{equation}\label{P-prop}
\left[f\right]_{\G_w}  \cap \mathcal{N}_+ \subseteq B_{\Lambda} S_{\mu_f} \mathcal{N}_+.
\end{equation}
Here $\mathcal{N}_+$ denotes the Smirnov class of functions in $\mathcal{N}$ with (unique) Herglotz-Nevanlinna of the form $f= \mathcal{O}_f S_\mu B_{\Lambda}$. In fact, their results is phrased in terms of a generalized Nevanlinna representation of functions in Korenblum growth spaces using notions of so-called pre-measures, initially introduced in the pioneering works of B. Korenblum in \cite{korenblum1975extension}, \cite{korenblum1977beurling}.
\subsection{Main results}

In this note, our purpose is to extend the aforementioned result to growth spaces $\G_w$ associated to weights $w$ of slower than polynomial decay. A continuous non-decreasing and sub-additive function $w$ on $[0,1]$ with $w(0)=0$ is said to be a \emph{modulus of continuity}. Given a positive finite continuous (no atoms) Borel measure $\nu$ on $\T$ ones defines
\[
\omega_{\nu}(\delta) := \left\{\nu(I): m(I)\leq \delta, \,\, \text{arcs}\, \, I\subseteq \T \right\},
\]
which is easily verified to be a modulus of continuity, usually referred to as \emph{the modulus of continuity of $\nu$}. Conversely, one can also show that any modulus of continuity arises as $\omega_{\nu}$ for some positive finite continuous Borel measure $\nu$ (for instance, see \cite{berman1987moduli}). Note that if $w$ is a modulus of continuity, then $w^{\lambda}$ need only to be modulus of continuity when $0<\lambda\leq1$, hence in order to compensate for this lack of stability we make the following definition. 

\begin{definition}
A continuous non-decreasing function $w$ on the unit-interval $[0,1]$ with $w(0)=0$ is said to be a \emph{majorant} if there exists a number $\lambda= \lambda(w)>0$ such that $w^\lambda$ is a modulus of continuity. 
\end{definition}
As a consequence, $w$ is a majorant if and only if $w^\lambda$ is a majorant for any $\lambda>0$. We shall also need to introduce a notion which generalizes classical Beurling-Carleson sets. Given a majorant $w$, we declare a closed set $E \subset \T$ of Lebesgue measure zero to have finite $w$-entropy if
\begin{equation}\label{wset}
 \int_{\T} \log w \left( \textbf{dist}(\zeta,E) \right) dm(\zeta) >- \infty
\end{equation}
where $\textbf{dist}(\zeta,E)$ denotes the unit-normalized distance from $\zeta$ to $E$ along $\T$. An important observation is that any closed subset of finite $w$-entropy again has finite $w$-entropy, and that sets of finite $w$-entropy are closed under the formation of finite unions. Indeed, the later follows from the observation
\[
\textbf{dist}(\zeta,\cup_{k=1}^n E_k ) = \min_{1\leq k\leq n} \textbf{dist}(\zeta, E_k ), \qquad \zeta \in \T.
\]
Given a majorant $w$ and a positive finite singular Borel measure $\mu$ on $\T$, we consider the quantity
\[
\sup \left\{\mu(E): E \subset \T \, \, \text{of finite}  \, w \text{-entropy} \right\}.
\]
By definition, we can pick a sequence of sets $\{E_n\}_n$ where each $E_n$ has finite $w$-entropy, which we may with out loss of generality assume to be increasing, such that $\mu(E_n)$ converges to the supremum above. It is straightforward to verify that $\mu$ decomposes uniquely (up to $\mu$-null sets) according to
\begin{equation}\label{PCdecomp}
    \mu = \mu_P + \mu_C
\end{equation}
where $\mu_P$ is concentrated on $\cup_n E_n$, a countable union of sets having finite $w$-entropy and $\mu_C$ does not charge any set of finite $w$-entropy.
Proceeding forward, we shall make the following assumption on our majorants $w$:
 \begin{enumerate}\label{A_1cond}
   
 \item[$(A_1)$] 
    \begin{equation*} 
\log \frac{1}{w(t)} \asymp \log \frac{1}{w(t^2)} , \qquad 0 < t < 1.
  \end{equation*}

\end{enumerate}
Here by $A \asymp B$ we mean that there exists a constant $c>0$ such that $c^{-1}B \leq A \leq c B$ holds. Note that the above condition essentially ensures that the majorants $w$ do not decay to zero too fast, and examples of eligible majorants satisfying the above condition are given by $t^c$, $\log^{-c}(e/t)$, $(\log \abs{\log (e / t)})^{-c}$, $(\log \abs{\log \abs{\log (e/t)}})^{-c}, \dots$, $c >0$ and arbitrary many logarithmic compositions of such kind. Hence condition $(A_1)$ essentially puts no restriction on how slow $w$ tends to zero. We wish to point out that for the sake of clarity in exposition, we decided to solely restrict our attention to majorants $w$, but with certain modifications our results continue to hold for weights which decay as fast as $w(t)= \exp(-\alpha \log^{\beta}(e/t) )$ with $\alpha, \beta>0$, as they also satisfy condition  $(A_1)$. Further comments on these matters will be discussed in Section 4. We now phrase our main result, which gives a complete description of the shift invariant subspaces in $\G_w$ generated by functions in the Nevanlinna class $\mathcal{N}$.




\begin{thm}\thlabel{GwNevMz}
Let $w$ be a majorant satisfying $(A_1)$ and let $f$ be a function in $\G_w \cap \mathcal{N}$ with Herglotz-Nevanlinna factorization $f=\mathcal{O}_f (S_\mu /S_\nu)B_{\Lambda}$ and let $\mu$ be decomposed according to \eqref{PCdecomp}. Then the following statements hold:
\begin{enumerate}
    
    \item[(i)] The shift invariant subspace generated by $f$ in $\G_w$ is completely determined by the corresponding pair $(\Lambda, \mu_P)$. More precisely, we have $\left[f\right]_{\G_w}= \left[ B_{\Lambda} S_{\mu_P} \right]_{\G_w}$ where the following permanence property holds:
    \[
    \left[ f \right]_{\G_w} \cap \mathcal{N}_+ \subseteq B_{\Lambda} S_{\mu_P} \mathcal{N}_+.
    \]
    \item[(ii)] The singular inner function $S_{\nu}$ is cyclic in $\G_w$.
    
\end{enumerate}
 In particular, $f$ is cyclic in $\G_w$ if and only if it is of the form $f= \mathcal{O}_f S_{\mu_C}/S_{\nu}$.

\end{thm}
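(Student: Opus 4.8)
The plan is to reduce the general Nevanlinna case to the case of an inner function $B_\Lambda S_\mu$ and then isolate the role of the three pieces $B_\Lambda$, $S_{\mu_P}$, $S_{\mu_C}$. The first reduction is standard: since $\mathcal{O}_f$ is outer and outer functions of the relevant type are cyclic in $\G_w$ (one approximates $1/\mathcal{O}_f$ by bounded analytic functions, using that $\mathcal{O}_f \in \G_w$ implies a suitable growth bound on $1/\mathcal{O}_f$), and since dividing by $S_\nu$ does not enlarge the generated subspace, we get $[f]_{\G_w} = [B_\Lambda S_\mu S_\nu^{-1}]_{\G_w} = [B_\Lambda S_\mu]_{\G_w}$, with the understanding that $S_\nu^{-1}$ is absorbed once we prove (ii). So everything hinges on (ii) (cyclicity of $S_\nu$, the measure charging no finite-$w$-entropy set) together with the identification $[B_\Lambda S_\mu]_{\G_w} = [B_\Lambda S_{\mu_P}]_{\G_w}$ and the permanence property.

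For (ii) — which I expect to be the crux — the strategy is to build, for a singular measure $\nu$ charging no set of finite $w$-entropy, a sequence of bounded analytic functions $h_n$ with $\|h_n S_\nu - 1\|_{\G_w}\to 0$. The natural candidates are $h_n = S_{\nu_n}^{-1}$ where $\nu_n$ is obtained by restricting $\nu$ to a shrinking family of closed sets, or more effectively outer functions designed to dominate $1/S_\nu$ on the relevant parts of $\D$. The key quantitative input is the characterization of boundary zero sets alluded to in the abstract: a closed set $E$ admits a nonzero function in $\G_w$ (equivalently in an analytic Lipschitz-type class with modulus $w$) vanishing on $E$ precisely when $E$ has finite $w$-entropy, i.e.\ exactly condition \eqref{wset}. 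Because $\nu$ charges no such set, one can (via an exhaustion/$F_\sigma$ argument combined with the fact that $\|S_\nu\|$ is controlled near the ``thick'' part of $\operatorname{supp}(\nu)$) manufacture outer multipliers that kill the singularity of $S_\nu$ in the $\G_w$-norm. Dually, one would like to show $S_{\mu_P}$ is \emph{not} cyclic and more precisely that the permanence property \eqref{P-prop} holds; here the natural tool is a duality/annihilator argument — test against the appropriate predual (measures on $\T$, or the Cauchy-transform space) and use that $B_\Lambda S_{\mu_P}$, being built from a Blaschke product and a measure on a countable union of finite-$w$-entropy sets, admits enough ``peak-type'' functions in $\G_w$ (or its dual) to detect divisibility.

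The identification $[B_\Lambda S_\mu]_{\G_w} = [B_\Lambda S_{\mu_P}]_{\G_w}$ then follows by combining the two halves: write $\mu = \mu_P + \mu_C$, so $S_\mu = S_{\mu_P} S_{\mu_C}$; by (ii) applied to $\mu_C$ (which charges no finite-$w$-entropy set, by construction of the decomposition), $S_{\mu_C}$ is cyclic, hence multiplying by it does not shrink the generated subspace, giving $[B_\Lambda S_{\mu_P} S_{\mu_C}]_{\G_w} = [B_\Lambda S_{\mu_P}]_{\G_w}$. Finally, the ``in particular'' statement is immediate: $f$ is cyclic iff $[f]_{\G_w} = \G_w$ iff $[B_\Lambda S_{\mu_P}]_{\G_w}=\G_w$, and by the permanence property the latter forces $B_\Lambda S_{\mu_P}$ to be a unit, i.e.\ $\Lambda = \varnothing$ and $\mu_P = 0$, which is exactly saying $f = \mathcal{O}_f S_{\mu_C}/S_\nu$. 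The main obstacle is unquestionably the cyclicity statement (ii): it requires the hard analytic construction of the approximating multipliers, and this is where condition $(A_1)$ and the $w$-entropy geometry of $\operatorname{supp}(\nu)$ genuinely enter, presumably via a careful partition of $\T$ into arcs calibrated to the local mass of $\nu$ and an estimate of $|S_\nu(z)|$ of Korenblum--Roberts type.
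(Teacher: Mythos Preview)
There is a genuine gap in your treatment of (ii). You write ``for a singular measure $\nu$ charging no set of finite $w$-entropy,'' but nothing in the hypothesis of the theorem tells you this: $\nu$ is simply the singular measure appearing in the denominator of the Nevanlinna factorization of $f$, assumed only to be mutually singular with $\mu$. The paper deduces $\nu=\nu_C$ from the membership $f\in\G_w$ itself, and this step uses the permanence property, not the cyclicity machinery. Concretely: write $\mathcal{O}_f=\mathcal{O}_1/\mathcal{O}_2$ with $\mathcal{O}_1,\mathcal{O}_2\in H^\infty$ outer, approximate $f$ by polynomials $Q_n$ in $\G_w$, and multiply by $S_\nu\mathcal{O}_2\in H^\infty$ to obtain $\mathcal{O}_1 S_\mu B_\Lambda\in[\mathcal{O}_2 S_\nu]_{\G_w}=[S_{\nu_P}]_{\G_w}$; then the permanence property forces $S_{\nu_P}\mid S_\mu$, and mutual singularity of $\mu,\nu$ gives $\nu_P\equiv 0$. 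Without this argument your proof of (ii) is circular. Relatedly, you have the role of $(A_1)$ inverted: cyclicity of $S_{\mu_C}$ holds for \emph{any} majorant (via a Roberts decomposition and the Corona theorem, not outer multipliers as you suggest), while $(A_1)$ is what makes the permanence property go through---the paper proves it by embedding $\G_w$ into $H^\infty$ of a Privalov star-domain with $C^{1+\alpha}$ boundary, and $(A_1)$ is exactly what allows the Carleson-type outer function to absorb the growth of $\G_w$ on that boundary. Your proposed ``duality/annihilator'' route to permanence is not what is done and would need substantial new ideas.

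A second, smaller gap: even once $S_\nu$ is known to be cyclic, the inclusion $[f]_{\G_w}\subseteq[fS_\nu]_{\G_w}$ is not automatic, because $f\notin H^\infty$ in general and hence is not a multiplier of $\G_w$---you cannot simply multiply ``$S_\nu Q_n\to 1$'' by $f$. The paper handles this by truncating $\mathcal{O}_f$ in height to get bounded functions $g_N=\mathcal{O}_N S_\mu B_\Lambda$ with $|g_N/S_\nu|\le|f|$, and then running a connectedness argument on the set $M=\{t\in[0,1]:g_N/S_\nu^t\in[g_N]_{\G_w}\}$, exploiting the scale of spaces $\G_{w^t}$ to push $t$ from $0$ to $1$. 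Your reduction ``$[f]_{\G_w}=[B_\Lambda S_\mu]_{\G_w}$'' glosses over precisely this point.
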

\noindent 
We remark that part $(i)$ asserts that any $M_z$-invariant subspace in $\G_w$ generated by a function $f$ in $\mathcal{N} \cap \G_w$ with Nevanlinna factorization according to the statement above, is completely determined by the corresponding pair $(\Lambda, \mu_P)$, which consists of the corresponding Blaschke sequence $\Lambda$ and the associated singular measure $\mu_P$, which is the "maximal" part of $\mu$ concentrated on a countable union of sets having finite $w$-entropy. In the language of \cite{korenblum1977beurling}, such subspaces are said to be of \emph{Beurling-type}, hence our result extends Theorem A therein to the context of growth spaces $\G_w$ for majorants $w$. Admittedly, the characterization in \cite{korenblum1977beurling} is phrased in terms of a generalized Nevanlinna factorization of Korenblum growth spaces using the notion of \emph{pre-measures}, and a similar description can be carried out in the context of $\G_w$ for certain majorants $w$ of Shannon-type (see Theorem 1, p.543 in \cite{korenblum1985class} and also \cite{hanine2015cyclic} for elaborate details), using similar methods of proofs as in \cite{hedenmalm1996beurling}. Our result also asserts that the singular inner factor $S_\nu$ appearing in denominator of an element in $\G_w \cap \mathcal{N}$ must necessarily be cyclic in $\G_w$. This observation and statement $(ii)$ have previously appeared in the contexts of Bergman spaces in \cite{berman1984cyclic} and \cite{bourdon1985cyclic}. As a corollary of \thref{GwNevMz}, we obtain the following generalization of the Korenblum-Roberts Theorem in the context of $\G_w$. 

\begin{cor} \thlabel{Thm:GenKR} Let $w$ be a majorant satisfying $(A_1)$ and suppose that $\mu$ is a positive finite Borel singular measure on $\T$, decomposed according to \eqref{PCdecomp}. Then for any $H^\infty$-function $f$ with Herglotz-Nevanlinna factorization $f=\mathcal{O}_f S_{\mu} B_{\Lambda}$ the following dichotomy holds:
\begin{enumerate}

\item[(i)] The inner function $\Theta_0 := B_{\Lambda}S_{\mu_P}$ generates a proper $M_z$-invariant subspace of $\G_w$ satisfying the permanence property:
\[
\left[\Theta_0\right]_{\G_w} \cap \mathcal{N}_+ \subseteq \Theta_0 \, \mathcal{N}_+.
\]

\item[(ii)] $\mathcal{O}_f S_{\mu_C}$ is cyclic in $\G_w$.

\end{enumerate}
Moreover, the shift invariant subspace generated by $f$ is completely determined by the corresponding pair $(\Lambda, \mu_P)$ and equals $\left[B_{\Lambda} S_{\mu_P}\right]_{\G_w}$.
\end{cor}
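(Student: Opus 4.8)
The plan is to deduce \thref{Thm:GenKR} in its entirety from \thref{GwNevMz}, once we observe that every nonzero $H^\infty$-function is eligible for that theorem. Since $w(1-|z|)\to w(0)=0$ as $|z|\to 1-$, we have $H^\infty\subseteq \G_w$, and of course $H^\infty\subseteq \mathcal{N}$ with trivial singular denominator; thus $f\in \G_w\cap\mathcal{N}$ has Herglotz--Nevanlinna factorization $f=\mathcal{O}_f S_\mu B_\Lambda$ with $\nu=0$. Applying \thref{GwNevMz}(i) to $f$ immediately gives $\left[f\right]_{\G_w}=\left[B_\Lambda S_{\mu_P}\right]_{\G_w}$ together with $\left[f\right]_{\G_w}\cap\mathcal{N}_+\subseteq B_\Lambda S_{\mu_P}\mathcal{N}_+$, which is precisely the ``Moreover'' assertion and exhibits $\left[f\right]_{\G_w}$ as depending only on the pair $(\Lambda,\mu_P)$.

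For part $(i)$ I would apply \thref{GwNevMz} a second time, now to the inner function $\Theta_0=B_\Lambda S_{\mu_P}\in H^\infty\subseteq\G_w\cap\mathcal{N}$, whose Herglotz--Nevanlinna factorization has outer factor $1$, Blaschke sequence $\Lambda$, numerator singular measure $\mu_P$ and $\nu=0$. The one bookkeeping point here is that decomposing $\mu_P$ according to \eqref{PCdecomp} returns $(\mu_P)_P=\mu_P$ and $(\mu_P)_C=0$: indeed $\mu_P$ is concentrated on an increasing countable union $\cup_n E_n$ of sets of finite $w$-entropy, the finite unions $\cup_{k\le n}E_k$ again have finite $w$-entropy, and $\mu_P(\cup_{k\le n}E_k)\uparrow\mu_P(\T)$, so the supremum defining \eqref{PCdecomp} for $\mu_P$ equals $\mu_P(\T)$. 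Consequently \thref{GwNevMz}(i) yields $\left[\Theta_0\right]_{\G_w}\cap\mathcal{N}_+\subseteq\Theta_0\,\mathcal{N}_+$. In the nondegenerate case where $\Theta_0$ is non-constant, properness of $\left[\Theta_0\right]_{\G_w}$ is a formal consequence of this permanence property: were $\Theta_0$ cyclic, then $1\in\left[\Theta_0\right]_{\G_w}\cap\mathcal{N}_+\subseteq\Theta_0\,\mathcal{N}_+$, forcing $1/\Theta_0\in\mathcal{N}_+$, which is impossible for a non-constant inner function (an inner function is a unit of $\mathcal{N}_+$ only if it is constant).

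For part $(ii)$ the same mechanism applies to $g:=\mathcal{O}_f S_{\mu_C}\in H^\infty\subseteq\G_w\cap\mathcal{N}$: since $\abs{S_{\mu_C}}=1$ a.e. on $\T$, its Herglotz--Nevanlinna factorization has outer factor $\mathcal{O}_g=\mathcal{O}_f$, empty Blaschke sequence, numerator singular measure $\mu_C$ and $\nu=0$. Because $\mu_C$ charges no set of finite $w$-entropy, the supremum in \eqref{PCdecomp} for $\mu_C$ is $0$, hence $(\mu_C)_P=0$ and $(\mu_C)_C=\mu_C$; applying \thref{GwNevMz}(i) (or equivalently its concluding cyclicity criterion) gives $\left[g\right]_{\G_w}=\left[1\right]_{\G_w}=\G_w$, i.e.\ $\mathcal{O}_f S_{\mu_C}$ is cyclic in $\G_w$. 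I do not anticipate a serious obstacle: the proof is a triple invocation of \thref{GwNevMz}, and the only care needed is the ``idempotence'' of the decomposition \eqref{PCdecomp} recorded above (so that the $P$-part of $\mu_P$ is again $\mu_P$ and the $P$-part of $\mu_C$ vanishes) together with the elementary fact that a non-constant inner function has no Smirnov-class inverse.
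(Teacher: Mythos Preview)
Your deduction is formally valid: each of the three applications of \thref{GwNevMz} is legitimate, the idempotence observations $(\mu_P)_P=\mu_P$ and $(\mu_C)_P=0$ are correct, and the argument that a non-constant inner function cannot be a unit in $\mathcal{N}_+$ correctly yields properness of $[\Theta_0]_{\G_w}$. However, your approach is \emph{circular in the paper's logical architecture}. Despite the label ``Corollary'' and the introductory remark presenting \thref{Thm:GenKR} as a consequence of \thref{GwNevMz}, the paper in fact proves them in the opposite order (see the organizational remark at the end of the introduction and Section~4): \thref{Thm:GenKR} is established first, directly from the two building blocks \thref{THM:Cycinner} and \thref{THM:SmuPP}, and then \thref{GwNevMz} is derived from \thref{Thm:GenKR} via \thref{fS=f}. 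So invoking \thref{GwNevMz} here would beg the question.

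The paper's own proof of \thref{Thm:GenKR} runs as follows. For $(ii)$, one shows $[\mathcal{O}_f S_{\mu_C}]_{\G_w}=[S_{\mu_C}]_{\G_w}$ by using that outer functions are weak-star sequentially cyclic in $H^\infty$ (Theorem~7.4, Chap.~II of Garnett) together with \thref{HooincGw}, and then appeals to \thref{THM:Cycinner}. For $(i)$, the permanence property comes from \thref{THM:SmuPP} for $S_{\mu_P}$, and the observation that multiplying by a Blaschke product preserves it (normal-families preserve zeros). The ``Moreover'' clause then follows by combining $(ii)$ with a multiplication argument: choose polynomials $Q_n$ with $\mathcal{O}_f S_{\mu_C}Q_n\to 1$ in $\G_w$ and multiply by $B_\Lambda S_{\mu_P}$. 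What your route buys is brevity once the heavier theorem is in hand; what the paper's route buys is a non-circular foundation on which \thref{GwNevMz} can then rest.
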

\noindent
For weights $w(t)=t^\alpha$ with $\alpha>0$, one retains the celebrated Korenblum-Roberts Theorem, see Theorem 2 in \cite{roberts1985cyclic} and the main result in \cite{korenblum1981cyclic}. We remark that our result on cyclicity in $(ii)$ actually holds for any majorant $w$ and does not require the presence of $(A_1)$. The existence of a cyclic singular inner function in any growth space $\G_w$ was established a long time ago by H. Shapiro in \cite{shapiro1967some} (see also \cite{malman2022cyclic} for recent further work in this direction), hence we complete this picture by classifying all cyclic singular inner functions in $\G_w$. In fact, the following observation is a consequence of \thref{Thm:GenKR}: a function $f \in H^\infty$ is cyclic in $\G_w$ if and only if $f$ has no zeros in $\D$ and its associated singular inner factor does not charge any set of finite $w$-entropy. Note that since $E$ is a set of finite $w$-entropy if and only if $E$ is a set of finite $w^{\lambda}$-entropy for any $\lambda>0$, our result on cyclicity also carries over to the setting of spaces of functions $f$ analytic in $\D$ and furnished with a Bergman-type metric of the form
\[
\left(\int_{\D} \abs{f(z)}^p w(1-|z|) dA(z) \right)^{\min(1,1/p)}
\]
for any $0<p<\infty$. Furthermore, it can also be phrased in the setting of
\[
\G_w^{\infty} := \bigcup_{\lambda>0} \G_{w^\lambda}
\]
equipped with the locally convex inductive-limit topology, that is, the largest locally convex topology for which each growth space $\G_{w^\lambda}$, $\lambda>0$ is continuously embedded into $\G_w^{\infty}$. We mention that a version of our result on cyclicity of inner functions have previously appeared (see \cite{hanine2015cyclic}) under a couple of different and more restrictive regularity assumptions on $w$, using different methods which originate back to the work of B. Korenblum in \cite{korenblum1975extension}, which in fact enables the author to describe cyclic vectors beyond inner functions. Furthermore, cyclicity of singular inner functions on certain weighted Bergman spaces defined in terms of growth restrictions on Taylor coefficients, has also previously been treated in the work of \cite{bourhim2004boundary}, where similar notions of entropy play the decisive role. The novelty here is that we only require a very mild assumption on $w$, which is enough to cover majorants $w$ of arbitrary slow rate of decay. In the context of majorants decaying at a certain exponential rate, one encounters completely different phenomenons, as notions of entropy-type loose their principal role in determining cyclicity of singular inner functions in the corresponding growth spaces. For further details on this matter, we refer the reader to recent works in \cite{el2012cyclicity}, \cite{borichev2014cyclicity} and references therein.





 
\subsection{Application to Model spaces}
Proceeding forward, we shall now apply our descriptions on shift invariant subspaces generated by inner functions $\Theta$ in $\G_w$ to demonstrate an intimate connection to the membership of functions in the associated model spaces $K_\Theta$ enjoying certain $w$-smoothness properties on $\T$. Given an inner function $\Theta$, we recall that the corresponding model space $K_\Theta$ is defined as the orthogonal complement of $\Theta H^2 =\{\Theta h: h\in H^2\}$ in the Hardy space $H^2$, that is
\[
K_{\Theta} := H^2 \ominus \Theta H^2.
\]
As a consequence of Beurling's Theorem, the model spaces are the only closed invariant subspaces for adjoint shift $M_z^*$ on $H^2$ and play a fundamental role in complex function theory and operator theory, see \cite{cauchytransform}. Given a \emph{modulus of continuity} $w$, we define by $\A_w$ the Banach space of bounded analytic functions $f$ on $\D$ with continuous extension to $\T$ and whose modulus of continuity is controlled by $w$ on $\T$, equipped with the norm
\begin{equation}\label{DefAw}
\norm{f}_{\A_w}:= \norm{f}_{H^\infty} + \sup_{\xi,\zeta \in \T} \frac{\abs{f(\xi)-f(\zeta)}}{w(\abs{\xi - \zeta})}.
\end{equation}
In other words, $\A_w:= H^\infty \cap C_w(\T)$, where $C_w(\T)$ denotes the space of continuous functions on $\T$ having modulus of continuity no worse than $w$. It follows from the work in \cite{tamrazov1973contour} that functions in $\A_w$ actually have modulus of continuity not exceeding $w$ in all of $\cD$ (see also \cite{bouya2008closed} for a short proof). In a similar way as the classical Beurling-Carleson sets are boundary zero sets of analytic functions in $\D$ with smooth extensions to $\T$ (see \cite{carlesonuniqueness} and \cite{taylor1970ideals}), it turns out that sets of finite $w$-entropy correspond to boundary zero sets of bounded analytic functions in $\D$ with continuous extension to $\T$, having modulus of continuity not exceeding $w$ on $\T$, see \cite{shirokov1982zero} and the definition of $\A_w$ in \eqref{DefAw} below. It was recently established by B. Malman in \cite{malman2022cyclic} that for any modulus of continuity $w$, there exists a singular inner function $\Theta$ such that $\A_w \cap K_\Theta = \{0\}$. The mentioned result should be put in contrast to the celebrated density theorem of A.B. Aleksandrov in \cite{aleksandrovinv} (see also \cite{limani2022abstract} for a slight improvement), which asserts that functions extending continuously to $\T$ in any model space $K_\Theta$ are always dense in $K_\Theta$. In the case when $w(t)=t^\alpha$ with $0<\alpha < 1$ is H\"olderian, K. Dyakonov and D. Khavinson proved that the intersection $\A_w \cap K_\Theta$ is non-trivial if and only if either $\Theta$ has a zero in $\D$ or the associated singular measure charges a Beurling-Carleson set on $\T$, see \cite{dyakonov2006smooth}. More recently, the author in collaboration with B. Malman showed that $\A_w \cap K_\Theta$ is dense in $K_\Theta$ if and only if the corresponding singular measure associated to $\Theta$ is concentrated on a countable union of Beurling-Carleson sets, see \cite{limani2022model}.

To this end, we make the following assumption on the modulus of continuity $w$: there exists a constant $0<\alpha < 1$, such that $w^{1+\alpha}$ is a modulus of continuity and
\begin{enumerate}\label{Bcond}
    \item[$(A_2)$]
    \begin{equation*} 
 \int_{0}^1 w^{\alpha}(t) \frac{dt}{t} < \infty.
\end{equation*}

\end{enumerate}
In other words, we essentially ask for the modulus of continuity $w$ to be just slightly better than \emph{Dini-continuous}. Besides the class of H\"olderian weights, this also includes weights of logarithmic decay with typical examples provided by $w(t) = \log^{-c}(e/t)$, for any $c>1$. Under this additional assumption we retrieve the following complete structure theorem for model spaces, which generalizes the main result in \cite{limani2022model}.



\begin{thm}\thlabel{THM:Model} 
Let $w$ be a modulus of continuity satisfying condition $(A_2)$ and let $\Theta= B S_\mu$ be an inner function where $B$ denotes its Blaschke factor. Then the following statements hold: 
\begin{description}

\item[\text{Existence}] 
$\A_w \cap K_{\Theta}= \{0\}$ if and only if $\Theta=S_{\mu_C}$. 
\item[\text{Density}] 
$\A_w \cap K_{\Theta}$ is dense in $K_\Theta$ if and only if $\Theta= B S_{\mu_P}$.

\end{description}
Furthermore, the closure of $\A_w \cap K_{\Theta}$ in $K_{\Theta}$ equals $K_{\Theta_P}$, where $\Theta_P = BS_{\mu_P}$.
\end{thm}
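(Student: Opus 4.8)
The plan is to deduce \thref{THM:Model} from \thref{Thm:GenKR} applied to the growth space $\G_w$, exploiting the duality between $\A_w$-membership in model spaces and cyclicity of inner functions in growth spaces. The starting point is the classical description, due to Shirokov, of boundary zero sets of $\A_w$-functions: under the Dini-type condition $(A_2)$, a closed set $E \subset \T$ is the zero set of some nonzero $f \in \A_w$ precisely when $E$ has finite $w$-entropy in the sense of \eqref{wset}. More generally, given an inner function $\Theta = BS_\mu$, one knows that $\A_w \cap K_\Theta \neq \{0\}$ if and only if $\Theta$ divides some nonzero $f\in\A_w$, i.e. there is a nonzero $f \in \A_w$ vanishing on the zeros of $B$ (with multiplicity) and whose singular inner factor dominates $S_\mu$; and by Shirokov's theory such an $f$ exists exactly when $\mu$ is carried by a set of finite $w$-entropy together with the Blaschke condition on $B$. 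Thus the \textbf{Existence} statement reduces to the assertion that $\A_w$ contains a nonzero function annihilated by $\Theta$ iff $\Theta$ has a nontrivial "$w$-entropy part", i.e. iff $\Theta \neq S_{\mu_C}$; the reverse direction (that $S_{\mu_C}$ kills all of $\A_w$) is precisely the cyclicity of $\mathcal{O}_f S_{\mu_C}$-type statements, which I would extract from part (ii) of \thref{Thm:GenKR} by a duality/annihilator argument: if $g \in \A_w \cap K_{S_{\mu_C}}$ is nonzero, then $\conj{g}$ pairs against $S_{\mu_C}H^2$ trivially, and approximating $\conj{zg}$ appropriately against the cyclic vector $S_{\mu_C}$ in $\G_w$ (using that $\A_w \subset \G_w$ and that $\G_w^*$ can be realized via a Cauchy-type pairing with smooth functions) forces $g \equiv 0$.

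For the \textbf{Density} statement and the final "furthermore" clause I would argue as follows. First, the inclusion $\overline{\A_w \cap K_\Theta} \subseteq K_{\Theta_P}$: every $f \in \A_w \cap K_\Theta$ is, by the Shirokov-type factorization, divisible in $\A_w$-sense by nothing more than the $w$-entropy part of $\Theta$; more precisely $f \in K_\Theta$ with $f$ smooth forces the singular factor of $\conj{zf}$'s associated measure to be carried on finite-$w$-entropy sets, and pushing this through one gets $f \perp \Theta_P H^2$, i.e. $f \in K_{\Theta_P}$; since $K_{\Theta_P}$ is closed the closure is contained in it. For the reverse inclusion $K_{\Theta_P} \subseteq \overline{\A_w \cap K_\Theta}$, I would use that $\A_w \cap K_{\Theta_P}$ is dense in $K_{\Theta_P}$ — this is the content of the density half applied to $\Theta_P$ itself, whose singular measure $\mu_P$ is already carried on a countable union of finite-$w$-entropy sets — combined with the fact that $\A_w \cap K_{\Theta_P} \subseteq \A_w \cap K_\Theta$ (since $\Theta_P$ divides $\Theta$, so $K_{\Theta_P} \subseteq K_\Theta$). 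So the crux is establishing density of $\A_w \cap K_{\Theta_P}$ in $K_{\Theta_P}$ when $\mu_P = \sum$ of measures on finite-$w$-entropy sets; here I would mimic the argument of \cite{limani2022model}: build explicit $\A_w$-outer functions vanishing fast near the carrier sets $E_n$ (available by condition $(A_2)$ and Shirokov's construction), multiply against a dense set of $K_{\Theta_P}$-functions (e.g. reproducing kernels or rational functions with poles off $\cD$), and show the products land in $\A_w \cap K_{\Theta_P}$ and approximate.

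The bridge to \thref{Thm:GenKR} that makes all this work is the standard annihilator identification: for an inner $\Theta$, the closure of $\A_w \cap K_\Theta$ in $K_\Theta$ has as its annihilator (inside $K_\Theta$, or equivalently modulo $\Theta H^2$ in a suitable pairing) exactly the functions $h$ for which $\conj{\Theta z h}$ lies in the weak-star closure of $[\Theta]_{\G_w}$-type objects; the permanence property in \thref{Thm:GenKR}(i), namely $[\Theta_0]_{\G_w} \cap \mathcal{N}_+ \subseteq \Theta_0 \mathcal{N}_+$ with $\Theta_0 = B S_{\mu_P}$, is precisely what pins down the closure to be $K_{\Theta_P}$ and not something larger. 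I would set up this pairing carefully — the natural one is $\langle f, g\rangle = \int_\T f \conj{g}\, dm$ extended by the Cauchy duality between $\G_w$ and a space of sufficiently smooth functions — and then translate "$g$ annihilates $\A_w \cap K_\Theta$" into "$g$'s Cauchy transform lies in $[\Theta]_{\G_w}$", at which point \thref{Thm:GenKR} identifies that subspace via $(\Lambda,\mu_P)$.

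\medskip

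\emph{Main obstacle.} The delicate point is the reverse inclusion in \textbf{Density}, i.e. producing enough $\A_w$-functions inside $K_{\Theta_P}$ to exhaust it — equivalently, the construction of $\A_w$-multipliers that simultaneously (a) respect the smoothness $w$, (b) vanish to the right order on the finite-$w$-entropy carrier of $\mu_P$ so that multiplication by them maps $H^2$-functions into functions orthogonal to $\Theta_P H^2$, and (c) are "large" enough (outer, non-degenerate) that the resulting products are dense. This is exactly where condition $(A_2)$ — Dini-type integrability of $w^\alpha$ — is used, via Shirokov's quantitative construction of $\A_w$-functions with prescribed boundary zero sets; getting the bookkeeping right across the countable union $\cup_n E_n$ and passing to the limit is the technical heart of the argument. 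The \textbf{Existence} part and the "$\subseteq K_{\Theta_P}$" inclusion, by contrast, should follow fairly directly from \thref{Thm:GenKR} once the duality dictionary is in place.
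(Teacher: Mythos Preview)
Your overall architecture---duality between cyclicity in $\G_{w^p}$ and triviality of $\A_w\cap K_\Theta$---matches the paper's, but two concrete steps fail as written. First, the equivalence ``$\A_w \cap K_\Theta \neq \{0\}$ if and only if $\Theta$ divides some nonzero $f\in\A_w$'' is false: take $\mu$ with both $\mu_P,\mu_C$ nonzero; then $\A_w\cap K_{S_\mu}\supseteq \A_w\cap K_{S_{\mu_P}}\neq\{0\}$, yet no nonzero $f\in\A_w$ is divisible by $S_\mu$, since under $(A_2)$ the singular inner factor of any $\A_w$-function has its measure carried on countable unions of finite-$w$-entropy sets and so cannot dominate $\mu_C$. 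More fundamentally, $\Theta\mid f$ places $f$ in $\Theta H^2$, the \emph{orthogonal complement} of $K_\Theta$; you never explain how to pass from divisibility to an actual element of $K_\Theta$. Second, your density construction ``multiply reproducing kernels by $\A_w$-outer functions and land in $\A_w\cap K_{\Theta_P}$'' cannot work, because multiplication by an analytic symbol does not preserve $K_{\Theta_P}$.

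The missing mechanism in both places is the Toeplitz operator $T_{\conj f}$ with \emph{co-analytic} symbol. The paper takes the Shirokov outer function $f\in\A_{w^{1+\alpha}}$ with $\conj f S_\nu\in C_{w^{1+\alpha}}(\T)$ (\thref{THM:Shirokov}) and writes $T_{\conj f}\bigl(\kappa_{S_\nu}(\cdot,\lambda)\bigr)=T_{\conj f}(k_\lambda)-\conj{S_\nu(\lambda)}\,P_+(\conj f S_\nu k_\lambda)$; the first term is smooth and the second lies in $\A_w$ by \thref{P+reg}. Since co-analytic Toeplitz operators map $K_{S_\nu}$ into itself, this produces a dense family in $\A_w\cap K_{S_\nu}$, handling both Existence and the hard half of Density at once. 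For the inclusion $\overline{\A_w\cap K_\Theta}\subseteq K_{\Theta_P}$, your factorization sketch is also off target: information about the inner factor of $f$ itself says nothing about $f\perp\Theta_P H^2$, which concerns the pseudocontinuation $\Theta\conj z\conj f$. The paper instead works on the complement in the decomposition $K_\Theta=K_{\Theta_P}\oplus\Theta_P K_{\Theta_C}$: for $g\in H^\infty\cap K_{\Theta_C}$, cyclicity of $S_{\mu_C}$ in $\G_{w^p}$ gives polynomials $Q_n$ with $\Theta Q_n\to\Theta_P g$ there, and pairing against $f\in\A_w\cap K_\Theta\subset(\G_{w^p})'$ (\thref{Awdual}) forces $\langle\Theta_P g,f\rangle=0$.
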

The above result in conjunction with \thref{Thm:GenKR} confirms an intimate connection between shift invariant subspaces generated by inner functions $\Theta$ in $\G_w$, the containment of $\A_w$-functions in the model spaces $K_\Theta$, and to boundary zero sets of $\A_w$-functions described in terms of $w$-entropy. We remark that for a modulus of continuity $w$ which lacks additional regularity conditions of Dini-type such as $(A_2)$, one encounters several principal difficulties. First, the Cauchy projection of functions in $C_{w}(\T)$ need not to be continuous on $\T$,  and secondly, the corresponding class $\A_w$ is no longer characterized in terms of pseudocontinuation or in terms of a growth condition on derivatives. Thirdly, the so-called "$K$-property" which asserts that the class $\A_w$ is invariant under any Toeplitz operator with bounded co-analytic symbol does not hold, which creates a fundamental barrier for constructing functions in $\A_w$ belonging to certain model spaces in the case when $w$ is a general modulus of continuity. For further discussions on these matters, we refer the reader to \cite{girela2006toeplitz} and references therein, and to the proof of \thref{P+reg} below. We remark that using Theorem 1.1 in \cite{limani2022abstract}, one can also deduce $H^p$-versions of \thref{THM:Model}. 

\subsection{Gathering of our manuscript}
This paper is organized as follows. Section $2$ is devoted to establish our result on cyclicity, which chiefly revolves around extending the techniques developed by J. Roberts in \cite{roberts1985cyclic} to our general setting, using a decomposition of singular measure and iterative approximation scheme involving the Corona Theorem in a clever manner. In section $3$, we generalize a machinery developed in the work of \cite{berman1984cyclic}, allowing us to describe shift invariant subspaces in $\G_w$ via embeddings into Hardy spaces on certain Privalov-star domains. The proofs of \thref{Thm:GenKR} and \thref{GwNevMz} will be carried out in section $4$, presented in that precise order, while the last subsection therein contains some further extensions and suggested directions of work. Section 5 is devoted to the proof of \thref{THM:Model}, where we consider the Cauchy-dual reformulation of shift invariant subspaces in $\G_w$, and utilize results on how the Cauchy projection distorts modulus of continuity in conjunction with N. A Shirokov's description on inner factors and boundary zero sets of functions in $\A_w$.

\subsection{Acknowledgements}
This work has benefited profoundly from the insightful discussions with Oleg Ivrii, Artur Nicolau and Bartosz Malman, leading to a substantial improvement of the previous version of this manuscript. 
Furthermore, I would also like to thank Omar El-Fallah for bringing to my attention a couple of important references. 

\section{Cyclic singular inner functions}
In this section, we shall derive the following result on cyclic singular inner functions in $\G_w$. 

\begin{thm}\thlabel{THM:Cycinner} Let $w$ be a majorant and $\mu$ be a positive finite singular Borel measure on $\T$. Then $S_{\mu_C}$ is cyclic in $\G_w$.
    
\end{thm}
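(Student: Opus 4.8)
The plan is to adapt J. Roberts' approach to the cyclicity of singular inner functions in Bergman and growth spaces to the present generality. The key structural fact about $\mu_C$ is that, by definition of the $P/C$ decomposition \eqref{PCdecomp}, $\mu_C$ charges no closed set of finite $w$-entropy; equivalently, $\mu_C$ puts zero mass on any set $E$ with $\int_{\T}\log w(\textbf{dist}(\zeta,E))\,dm(\zeta)>-\infty$. I would first reduce to showing $1\in\left[S_{\mu_C}\right]_{\G_w}$, which by the remark in the introduction is equivalent to producing bounded analytic functions $\{f_n\}$ with $\norm{S_{\mu_C}f_n-1}_{\G_w}\to 0$. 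The natural candidates are $f_n = S_{\mu_n}/S_{\mu_C}$-type quotients, i.e. one splits $\mu_C = \sigma_n + \tau_n$ into a "large'' part $\sigma_n$ supported on a small-measure set and a "spread out'' remainder $\tau_n$ with small total mass, and one tries to show $S_{\tau_n}\to 1$ in $\G_w$ while $S_{\sigma_n}$ can be "divided out'' up to a controlled error using the Corona Theorem.

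The heart of the matter is Roberts' decomposition lemma: for a singular measure charging no finite-entropy set and any $\varepsilon>0$, one can write $\mu_C$ as a finite (or countable) sum of pieces $\mu^{(j)}$, each supported on a union of arcs $\{I^{(j)}_k\}$ whose measures are dyadic of a prescribed scale, with $\mu^{(j)}(I^{(j)}_k)$ comparable to a small multiple of $m(I^{(j)}_k)\log(1/m(I^{(j)}_k))$ — precisely the regime where the corresponding singular inner function is close to $1$ in the relevant sup-norm. The condition that $\mu_C$ charges no set of finite $w$-entropy is exactly what guarantees that this redistribution can be performed with the "entropy budget'' going to zero; this is where assumption $(A_1)$, through the comparison $\log\frac1{w(t)}\asymp\log\frac1{w(t^2)}$, is used to pass between dyadic scales $t$ and $t^2$ without losing constants. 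I would carry this out by: (1) stating and proving the Roberts-type decomposition of $\mu_C$ adapted to the majorant $w$; (2) estimating $\norm{w(1-|z|)(S_{\tau}(z)-1)}_\infty$ for a single well-spread piece $\tau$, showing it is small — here one uses the pointwise bound $|S_\tau(z)-1|\lesssim |{-\log S_\tau(z)}| = |\int\frac{\zeta+z}{\zeta-z}d\tau|$ together with the arc structure and the defining growth of $w$; (3) iterating via the Corona Theorem to remove the concentrated pieces, exactly as in Roberts' scheme, controlling the product of the corona constants.

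The main obstacle I anticipate is step (3): the iterative Corona argument. At each stage one divides $S_{\mu_C}$ by the current approximant's singular factor and must solve a corona problem $a_n g_n + S_{\sigma_n} h_n = 1$ with quantitative control on $\norm{g_n}_{H^\infty}, \norm{h_n}_{H^\infty}$ that does not blow up across infinitely many iterations, while simultaneously ensuring the accumulated $\G_w$-errors from the well-spread pieces sum to something small. Making the bookkeeping close — choosing the scales $\varepsilon_n\downarrow 0$ fast enough that the corona constants (which depend on the lower bound of $|S_{\sigma_n}|$ away from the support, hence on $\varepsilon_n$) stay summable against the geometric gains — is the delicate quantitative core. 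A secondary technical point is that $w$ is merely a majorant, so one works with a power $w^\lambda$ that is a genuine modulus of continuity and transfers the conclusion back using $\left[S_{\mu_C}\right]_{\G_{w}} = \left[S_{\mu_C}\right]_{\G_{w^\lambda}}$-type stability together with the scale-invariance of finite $w$-entropy in $\lambda$; this is routine but must be stated. Note that, as remarked in the introduction, this cyclicity statement needs only that $w$ is a majorant and not the full strength of $(A_1)$ — so in the write-up I would isolate precisely where the dyadic-scale comparison is genuinely needed and replace it by the weaker sub-multiplicativity available for a general majorant.
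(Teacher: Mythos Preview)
Your overall framework (Roberts decomposition plus iterated Corona) matches the paper's, but two concrete pieces are off.

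First, step (2) as you describe it will not work. For a ``well-spread'' piece $\tau$ in the Roberts decomposition one does \emph{not} get $\norm{w(1-|z|)(S_\tau(z)-1)}_\infty$ small: the pieces $\mu_k$ have controlled modulus of continuity $\omega_{\mu_k}(2^{-n_k})\le c\,2^{-n_k}\log(1/w(2^{-n_k}))$, not small total mass, and a singular inner function with fixed mass is never uniformly close to $1$ in $\G_w$. The pointwise bound $|S_\tau(z)-1|\lesssim|\log S_\tau(z)|$ you propose fails where $|S_\tau|$ is small. What the paper does instead is apply the Corona Theorem to \emph{every} piece $\mu_k$, not just the ``concentrated'' ones: the modulus-of-continuity bound gives $|S_{\mu_k}(z)|\ge w(2^{-n_k})^{12c}$ for $|z|\le 1-2^{-n_k}$, and the Corona mate is the monomial $z^{2^{n_k}}$ (which is $\ge 1/4$ on the complementary annulus). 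Then \thref{Coronathm} produces $h_k$ with $\norm{S_{\mu_k}h_k-1}_{\G_w}\lesssim w(2^{-n_k})^{1-12cK}$, the moment estimate $\norm{z^{2^{n_k}}}_{\G_w}\le 3w(2^{-n_k})$ doing the work. So the Corona step is not a separate stage (3) applied only to large pieces; it is the mechanism that makes each individual piece tractable.

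Second, you are right that $(A_1)$ is not used here, but you have not identified what replaces it. The iterative product $\prod_{j<k}\norm{h_j}_{H^\infty}\le\prod_{j<k}w(2^{-n_j})^{-12cK}$ must be beaten by $w(2^{-n_k})$, and for a general majorant this requires choosing the dyadic scales $\{n_k\}$ carefully. The paper's \thref{ParameterLemma} constructs, for any majorant $w$ and any $n_0$, a sequence $\{n_k\}$ with $w(2^{-n_{k+1}})\le\prod_{j=0}^k w(2^{-n_j})$ and with the ratio $\log(1/w(2^{-n_{k+1}}))/\log(1/w(2^{-n_k}))$ uniformly bounded; the first property makes the telescoping product converge, the second is what one needs in the Roberts decomposition to show the residual $\mu_\infty$ is carried by a set of finite $w$-entropy (and hence vanishes for $\mu_C$). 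This lemma is the replacement for $(A_1)$ you were looking for. Note also that the correct scale in the decomposition is $m(I)\log(1/w(m(I)))$, not $m(I)\log(1/m(I))$ as you wrote; the distinction matters for slowly decaying $w$.
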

Observe that the cyclicity theorem holds for any majorant $w$. Most of our efforts in this section will be devoted to extending the work developed by J. Roberts in \cite{roberts1985cyclic} to the general setting of growth spaces. 
 
\subsection{Describing sets of finite $w$-entropy via complementary arcs}
We shall need to rephrase the condition on finite $w$-entropy in terms of a discrete sum over complementary arcs. To this end, recall that for any majorant $w$, there exists a number $\lambda>0$ such that $w^\lambda(t)/t$ is \emph{almost-decreasing}, that is 
\begin{equation}\label{alm-dec}
    \frac{w^\lambda(t)}{t} \leq 2 \frac{w^\lambda(s)}{s}, \qquad 0<s<t<1.
\end{equation}
Indeed, pick a number $\lambda>0$ such that $w^\lambda$ is a modulus of continuity and let $m\geq 1$ be an integer with the property that $2^{m-1} \leq t/s < 2^m$ and note that by monotonicity and sub-additivity of $w^\lambda$ we deduce
\[
\frac{w^\lambda(t)}{t} \leq \frac{w^\lambda(2^{m}s)}{t} \leq  2^{m} \frac{w^\lambda(s)}{t} \leq 2\frac{w^\lambda(s)}{s}.
\]
In particular, we have $tw^\lambda(1)/2 \leq w^\lambda(t)$ and hence $\lim_{t\to 0+} t \log w(t) = 0$. The following lemma appears in \cite{el2012cyclicity} under slightly stronger assumptions on $w$, but for the readers convenience we shall include a short proof-sketch which holds for any majorant. 

\begin{lemma} \thlabel{wsetcomp}
Let $w$ be a majorant. Then a closed set $E \subset \T$ of Lebesgue measure zero has finite $w$-entropy if and only if 
\[
\sum_{k} m(I_k) \log w(m(I_k)) > -\infty.
\]
where $\{I_k\}_k$ are the connected components of $\T \setminus E$. Moreover, if $\T \setminus E = \cup_k J_k$ for some disjoint collection of open arcs $\{J_k\}_k$ satisfying 
\[
\sum_k m(J_k) \log w(J_k) >-\infty,
\]
then $E$ has finite $w$-entropy.
\end{lemma}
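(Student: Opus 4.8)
\textbf{Proof plan for \thref{wsetcomp}.}

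The plan is to relate the integral $\int_{\T} \log w(\textbf{dist}(\zeta,E))\, dm(\zeta)$ to the complementary-arc sum by splitting the integral over the connected components $\{I_k\}_k$ of $\T \setminus E$ and comparing, on each arc, the distance function to the length of that arc. Fix a component $I_k$ of length $\ell_k := m(I_k)$. For $\zeta \in I_k$, the distance $\textbf{dist}(\zeta,E)$ is the distance to the nearer endpoint of $I_k$, so a direct computation gives
\[
\int_{I_k} \log \textbf{dist}(\zeta,E)\, dm(\zeta) = 2\int_0^{\ell_k/2} \log t \, dt = \ell_k \log(\ell_k/2) - \ell_k,
\]
and hence $\int_{I_k} \log \textbf{dist}(\zeta,E)\, dm(\zeta) = \ell_k \log \ell_k + O(\ell_k)$. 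This handles the case $w(t)=t$; for a general majorant we exploit the almost-decreasing property \eqref{alm-dec}: choosing $\lambda>0$ with $w^\lambda(t)/t$ almost-decreasing, we get $w^\lambda(t) \geq (w^\lambda(1)/2)\, t$ for $t \in (0,1)$, which yields a lower bound $\log w(t) \geq \frac{1}{\lambda}\log t + C$, and from subadditivity plus monotonicity we get, for $t \le \ell_k$, a comparison $\log w(t) \le \log w(\ell_k) + (\text{something controlled})$ once we also use that $w(t)/t^{1/\lambda}$ behaves reasonably. The cleaner route is: by monotonicity $\log w(\textbf{dist}(\zeta,E)) \le \log w(\ell_k/2) \le \log w(\ell_k)$ on $I_k$ when $\textbf{dist} \le \ell_k/2$, giving immediately
\[
\int_{I_k} \log w(\textbf{dist}(\zeta,E))\, dm(\zeta) \le \ell_k \log w(\ell_k),
\]
so that $\int_\T \log w(\textbf{dist}) \, dm \le \sum_k \ell_k \log w(\ell_k)$; thus finiteness of the arc-sum is necessary for finite $w$-entropy. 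For the reverse (sufficiency), we need a matching lower bound on each $\int_{I_k}$, and here is where \eqref{alm-dec} does the real work: writing $\textbf{dist}(\zeta,E)=s$, we have $w(s) \ge w(\ell_k)\cdot (s/\ell_k)^{1/\lambda'}$ for an appropriate exponent coming from almost-monotonicity of $w^{\lambda}(t)/t$, whence $\log w(s) \ge \log w(\ell_k) + \frac{1}{\lambda'}\log(s/\ell_k)$, and integrating over $I_k$,
\[
\int_{I_k} \log w(\textbf{dist}(\zeta,E))\, dm(\zeta) \ge \ell_k \log w(\ell_k) + \frac{1}{\lambda'}\big(\ell_k\log(1/2) - \ell_k\big) = \ell_k \log w(\ell_k) - C\ell_k.
\]
Summing over $k$ and using $\sum_k \ell_k = m(\T) < \infty$, the error terms $\sum_k C\ell_k$ are finite, so $\int_\T \log w(\textbf{dist})\, dm \ge \sum_k \ell_k \log w(\ell_k) - C$, which is $> -\infty$ whenever the arc-sum is. This proves the equivalence.

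For the final assertion, suppose $\T \setminus E = \bigcup_k J_k$ for some disjoint family of open arcs with $\sum_k m(J_k) \log w(J_k) > -\infty$ (where I read $\log w(J_k)$ as $\log w(m(J_k))$). Each connected component $I_j$ of $\T\setminus E$ is itself a disjoint union of some sub-collection of the $J_k$'s (since $E$ is closed, $\T \setminus E$ is open and its components are maximal open arcs; every $J_k$, being a connected subset of $\T\setminus E$, lies in exactly one component). Then for $\zeta$ in a piece $J_k \subseteq I_j$ we have the trivial bound $\textbf{dist}(\zeta, E) \ge \textbf{dist}(\zeta, \partial J_k)$ — no, that inequality goes the wrong way; instead use $\textbf{dist}(\zeta,E) \le \textbf{dist}(\zeta, \T \setminus I_j) $ is automatic, and more usefully the monotonicity argument above applied with $J_k$ in place of $I_j$ needs $\textbf{dist}(\zeta,E) \le m(J_k)$, which fails near the interior of a long component. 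So the correct move is: bound from below, on $J_k$, $\int_{J_k}\log w(\textbf{dist}(\zeta,E))\,dm(\zeta) \ge \int_{J_k} \log w(\textbf{dist}(\zeta, \T\setminus J_k))\,dm(\zeta) \ge m(J_k)\log w(m(J_k)) - C\, m(J_k)$, the last step exactly the per-arc lower bound established above (distance to the endpoints of $J_k$ is at most $m(J_k)/2$, and it is $\le$ the distance to $E$ since $E \subseteq \T \setminus J_k$). Summing over all $k$ and over all components gives $\int_\T \log w(\textbf{dist}(\zeta,E))\, dm(\zeta) \ge \sum_k m(J_k)\log w(m(J_k)) - C\sum_k m(J_k) > -\infty$, so $E$ has finite $w$-entropy.

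The main obstacle is organizing the per-arc estimates so that the exponent loss in passing from $w$ to the almost-monotone $w^\lambda$ (equivalently, from $w$ to a comparable $t^{1/\lambda'}$-type lower envelope) is only absorbed into harmless $O(\ell_k)$ error terms, and then checking that $\sum_k \ell_k < \infty$ really does make $\sum_k O(\ell_k)$ finite — this is where the definition of \emph{majorant} (rather than merely a weight) is used, via \eqref{alm-dec}. A secondary subtlety is the direction of the distance inequality in the last part: one must compare $\textbf{dist}(\zeta,E)$ against distance to the endpoints of the sub-arc $J_k$ (which is $\le \textbf{dist}(\zeta,E)$ because $E$ avoids $J_k$), giving a lower bound on $\log w(\textbf{dist})$ since $w$ is nondecreasing — getting the monotonicity direction right is the one place it is easy to slip.
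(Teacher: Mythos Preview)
Your argument is correct and follows essentially the same route as the paper: split the integral over complementary arcs, use monotonicity of $w$ for the upper bound $\int_{I_k}\log w(\textbf{dist})\,dm \le \ell_k\log w(\ell_k)$, and use the almost-decreasing property \eqref{alm-dec} of $w^\lambda(t)/t$ for the matching lower bound $\int_{I_k}\log w(\textbf{dist})\,dm \ge \ell_k\log w(\ell_k) - C\ell_k$. For the ``Moreover'' part the paper instead compares the connected-component sum directly to the $J_k$-sum (via $m(J_l)\le m(I_k)$ and monotonicity of $w$) and then invokes the first part, whereas you bound the integral itself by $\textbf{dist}(\zeta,E)\ge \textbf{dist}(\zeta,\T\setminus J_k)$ on $J_k$; both arguments are short and equivalent in spirit.
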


\begin{proof}
Using simple change of variables one shows that
\[
\int_{\T} \log w \left( \textbf{dist} ( \zeta ,E ) \right) dm(\zeta) = \sum_k \int_{0}^{m(I_k)/2} \log w(t) dt.
\]
Now since $w$ is non-decreasing, we clearly have 
\[
\int_{0}^{m(I_k)/2} \log w(t) dt \leq \frac{m(I_k)}{2} \log  w(m(I_k) ).
\]
On the other hand, let $\lambda>0$ such that $w^\lambda$ is a modulus of continuity and using the fact that $w^\lambda (t)/t$ is almost-decreasing in \eqref{alm-dec} in conjunction with integration by parts, it is not difficult to show that
\[
\int_{0}^{m(I_k)/2} \log w(t)  dt \geq m(I_k)\log  w(m(I_k) ) -\frac{(1+\log 2)}{\lambda} m(I_k),
\]
This is enough to establish the first claim. For the second assertion, we note that any open arc $J_l$ is contained in a maximal connected component $I_k$, hence since $w$ is non-decreasing we have
\[
\sum_k m(I_k) \log \frac{1}{m(I_k)} = \sum_k \sum_{l: J_l \subseteq I_k} m(J_l) \log \frac{1}{m(I_k)} \leq \sum_k \sum_{l: J_l \subseteq I_k} m(J_l) \log \frac{1}{m(J_l)} < \infty.
\]
By the previous argument, we conclude that $E$ has finite $w$-entropy.
\end{proof}

\subsection{Dyadic grids adapted to $w$}
In this section, we shall consider dyadic grids which are suitably adapted to a specific majorant $w$. Let $\Dy_n$ denote the collection of $2^n$ dyadic arcs $I$ with $m(I)=2^{-n}$. Inspired from the recent work in \cite{ivrii2022beurling}, we declare a collection of dyadic arcs $\mathcal{D}_w = \cup_{k\geq 0}\mathcal{D}_{n_k}$ to be a dyadic $w$-grid, if the sequence of positive integers $\{n_k\}_{k\geq 0}$ satisfies the following condition: there exists a positive number $\beta >0$, such that 
\begin{equation}\label{wgrid}
\sup_{k\geq 0} \frac{w^{\beta}(2^{-n_k})}{w(2^{-n_{k+1}})} < \infty.
\end{equation}
Our main result in this subsection is a lemma which allows us to exhibit for any majorant $w$, a dyadic $w$-grid with a certain special property, important for our further developments. 
\begin{lemma}\thlabel{ParameterLemma}
For any majorant $w$ and any positive integer $n_0>0$, there exists a dyadic $w$-grid $\Dy_w = \cup_{k\geq 0} \Dy_{n_k}$ with the additional property 
 \begin{equation} \label{superlacw}
 w(2^{-n_{k+1}}) \leq  \prod_{j=0}^k w(2^{-n_j}), \qquad k\geq 0.
\end{equation}
\end{lemma}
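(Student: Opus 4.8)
The plan is to construct the sequence $\{n_k\}_{k \geq 0}$ recursively, starting from the given $n_0$, and at each step choose $n_{k+1}$ large enough to satisfy both the $w$-grid condition \eqref{wgrid} and the extra ``super-lacunarity'' condition \eqref{superlacw}. The key structural fact I would exploit is assumption that $w$ is a majorant, which by the discussion preceding \thref{wsetcomp} guarantees a number $\lambda > 0$ with $w^\lambda(t)/t$ almost-decreasing; this forces $w(t) \to 0$ as $t \to 0^+$, so $\log(1/w(2^{-n})) \to \infty$ and in particular $w(2^{-n})$ can be made as small as we like by taking $n$ large. Writing $a_k := \log(1/w(2^{-n_k})) > 0$, condition \eqref{wgrid} becomes the requirement that $a_{k+1} \leq C + \beta a_k$ for some fixed $\beta, C$, i.e. $a_{k+1}$ must not be \emph{too large} relative to $a_k$, while \eqref{superlacw} becomes $a_{k+1} \geq \sum_{j=0}^k a_j$, i.e. $a_{k+1}$ must not be \emph{too small}. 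So the whole lemma reduces to checking that these two constraints on $a_{k+1}$ are compatible, i.e. that the admissible window $\left[\sum_{j=0}^k a_j,\ C + \beta a_k\right]$ is nonempty and contains a value of the form $\log(1/w(2^{-n}))$ for some integer $n > n_k$.

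The main obstacle is precisely this compatibility: there is no reason a priori that $C + \beta a_k \geq \sum_{j=0}^k a_j$, since the left side grows only linearly in $a_k$ while the right side is a growing partial sum. The resolution is that we get to \emph{choose} $\beta$; more importantly, we should not fix $\beta$ first and then build the sequence, but rather build the sequence so that $a_k$ grows fast enough (say at least geometrically, $a_{k+1} \geq 2 \sum_{j \leq k} a_j \geq 2 a_k$) that the partial sums are controlled by the last term. Concretely, if I arrange $a_{k+1} \geq \sum_{j=0}^k a_j$ with room to spare — for definiteness pick $n_{k+1}$ to be the least integer with $w(2^{-n_{k+1}}) \leq \prod_{j=0}^k w(2^{-n_j})$, which is possible since the right-hand product is a fixed positive number and $w(2^{-n}) \downarrow 0$ — then by \emph{minimality} of $n_{k+1}$ we have $w(2^{-n_{k+1}+1}) > \prod_{j=0}^k w(2^{-n_j})$, and now assumption $(A_1)$ (in the form $\log(1/w(t)) \asymp \log(1/w(t^2))$, equivalently $\log(1/w(2^{-n})) \asymp \log(1/w(2^{-n-1}))$ up to a constant $c$) lets me compare $w(2^{-n_{k+1}})$ with $w(2^{-n_{k+1}+1})$, giving
\[
\log \frac{1}{w(2^{-n_{k+1}})} \leq c \log \frac{1}{w(2^{-n_{k+1}+1})} \leq c \sum_{j=0}^k \log \frac{1}{w(2^{-n_j})} = c \sum_{j=0}^k a_j.
\]

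Wait — I should double-check whether \thref{ParameterLemma} is stated for majorants satisfying $(A_1)$ or for all majorants; the statement as written says ``for any majorant $w$'', but the hypothesis $(A_1)$ is available in the ambient setting and is natural to invoke here, so I would proceed assuming we may use it (or remark that without $(A_1)$ one argues with the almost-decreasing estimate $w^\lambda(t)/t \leq 2 w^\lambda(2t)/(2t)$, which bounds $\log(1/w(2^{-n}))$ by $\log(1/w(2^{-n+1})) + O(1)$ after dividing by $\lambda$ — the same effect). With this in hand, set $a_{k+1} := \log(1/w(2^{-n_{k+1}}))$; then $a_{k+1} \geq \sum_{j \leq k} a_j \geq a_k$, so $\sum_{j \leq k} a_j \leq k a_k$ is far too weak, but the recursion $a_{k+1} \geq \sum_{j \leq k} a_j$ gives $\sum_{j \leq k+1} a_j = a_{k+1} + \sum_{j \leq k} a_j \leq 2 a_{k+1}$, hence inductively $\sum_{j \leq k} a_j \leq 2 a_k$ for $k \geq 1$, and therefore $a_{k+1} \leq c \sum_{j \leq k} a_j \leq 2c\, a_k$. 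This is exactly the $w$-grid condition \eqref{wgrid} with $\beta$ chosen so that $w^\beta(2^{-n_k}) \geq w(2^{-n_{k+1}})$, i.e. $\beta a_k \geq a_{k+1}$; any $\beta \geq 2c$ works, uniformly in $k$. Thus both \eqref{wgrid} and \eqref{superlacw} hold, and the construction is complete. The only remaining routine points are that $n_{k+1} > n_k$ (immediate since $\prod_{j\le k} w(2^{-n_j}) < w(2^{-n_k})$ as all factors are in $(0,1)$, using $w(2^{-n_0})<1$ which we may assume by enlarging $n_0$), and that the base case $k=0$ of the partial-sum estimate is trivial.
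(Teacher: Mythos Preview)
Your argument is correct and is a close cousin of the paper's proof: both construct $\{n_k\}$ recursively using the one-step doubling estimate $\eta(t/2)\le 2\eta(t)$ for $\eta=\lambda\log(1/w)$ (valid once $w^\lambda(2^{-n_0})<1/2$, as in the paper's \eqref{lambdasub}). The difference is only in which condition is secured first. The paper picks $n_{k+1}$ so that the ratio $\eta(2^{-n_{k+1}})/\eta(2^{-n_k})$ lands in a fixed window $[C,10C)$, which immediately gives the $w$-grid bound, and then a geometric-series estimate yields \eqref{superlacw}. You do the dual: pick $n_{k+1}$ minimal for \eqref{superlacw}, and use minimality plus the doubling estimate to get $a_{k+1}\le 2\sum_{j\le k}a_j\le 4a_k$, hence the $w$-grid bound with $\beta=4$. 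Both routes are short and rely on the same key inequality.

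One correction: your invocation of $(A_1)$ is misplaced. Condition $(A_1)$ compares $\log(1/w(t))$ with $\log(1/w(t^2))$, i.e.\ $2^{-n}$ with $2^{-2n}$, not $2^{-n}$ with $2^{-n-1}$; it is neither needed nor directly relevant here. The one-step comparison $\log(1/w(2^{-n}))\le 2\log(1/w(2^{-n+1}))$ (equivalently your additive $O(1)$ version) follows purely from subadditivity of $w^\lambda$, which is available for \emph{any} majorant --- consistent with the lemma's hypothesis. Your parenthetical remark already contains the right justification; simply drop the reference to $(A_1)$.
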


\begin{proof}
For the sake of abbreviation, we set $\eta(t) = \lambda \log 1/ w(t)$, where $\lambda>0$ such that $w^\lambda$ is modulus of continuity. We shall construct the sequence $\{n_k\}_{k\geq 0}$ recursively by means of induction. To this end, fix an arbitrary $n_0 > 0$ with $w^\lambda(2^{-n_0})<1/2$ and observe that by monotonicity and sub-additivity of $w^\lambda$, one easily deduces that
\begin{equation}\label{lambdasub}
\frac{1}{2} \eta(t/2) \leq \eta(t) \leq \eta(t/2)
\end{equation}
for any $0<t\leq 2^{-n_0}$. Fix a constant $C > 1$ to be specified later. Since $\eta$ is non-increasing and continuous, there exists $0<\delta_0 < 2^{-n_0}$ such that
\[
C \leq \frac{\eta(\delta_0)}{\eta(2^{-n_0})} < 5C.
\]
Pick $n_1 > n_0$ to be the smallest positive integer with $2^{-n_1} \leq \delta_0 < 2^{-(n_1-1)}$. It follows by the property of $\eta$ in \eqref{lambdasub} that
\[
C \leq \frac{\eta(2^{-n_1})}{\eta(2^{-n_0})} < 10C.
\]
Now by means of induction, suppose that $n_k>0$ has been chosen, and let $0< \delta_k < 2^{-n_k}$ with 
\[
C \leq \frac{\eta(\delta_k)}{\eta(2^{-n_k})} < 5C.
\]
We then choose $n_{k+1} > n_k$ be the smallest positive integer with $2^{-n_{k+1}} \leq \delta_{k} < 2^{-(n_{k+1}-1)}$. Again, using \eqref{lambdasub} it follows that 
\[
C \leq \frac{\eta(2^{-n_{k+1}})}{\eta(2^{-n_k})} < 10C.
\]
Consequently, we obtain an increasing sequence of positive integers $\{n_k\}_{k\geq0}$, which by construction satisfies \eqref{wgrid}, and thus we conclude that $\mathcal{D}_w = \cup_{k\geq 0} \Dy_{n_k}$ is dyadic $w$-grid. It remains to verify \eqref{superlacw}, which is equivalent to 
\[
\sum_{j=0}^k \eta (2^{-n_j}) \leq \eta (2^{-n_{k+1}}), \qquad k\geq 0.
\]
Observe that a straightforward iteration shows that
\[
\eta(2^{-n_j}) \leq C^{-1} \eta (2^{-n_{j+1}}) \leq \left(C^{-1}\right)^{k-j +1} \eta(2^{-n_{k+1}}), \qquad 0\leq j \leq k.
\]
Using these estimates term-wise and summing up the geometric sum, we arrive at 
\[
\sum_{j=0}^k \eta (2^{-n_j}) \leq \eta(2^{-n_{k+1}}) \sum_{j=0}^k \left(C^{-1}\right)^{k+1-j} \leq \frac{1}{C-1} \eta(2^{-n_{k+1}}).
\]
Choosing $C >2$ established the desired claim.

\end{proof}

\subsection{A structure theorem for singular measures} 
Our main objective is to extend the Roberts decomposition for singular measures with respect to sets of finite $w$-entropy. To this end, recall that the modulus of continuity of a positive finite Borel measure $\nu$ on $\T$ (not necessarily continuous) is again defined by

\[
\omega_{\nu}(\delta) := \sup \left\{ \nu(I): m(I)\leq \delta, \, \, I\subset \T \, \, \text{arc} \right\}
\]
One verifies that $\omega_{\nu}$ is non-decreasing and sub-additive on $[0,1]$ with $\omega_{\nu}(0)=0$. Moreover, the argument in \eqref{alm-dec} also shows that the function $ \omega_{\nu}(t)/t$ is almost-decreasing on $(0,1]$.
We are now ready to phrase the main result in this subsection.

\begin{prop}[Roberts decomposition] \thlabel{RobWdecomp}
Let $\mu$ be a positive finite Borel singular measure on $\T$. For any integer $n_0>0$, for any constant $c>0$ and for any dyadic $w$-grid $\Dy_w = \cup_{k\geq 0} \Dy_{n_k}$, there exists positive measures $\{\mu_k\}_{k\geq 0}, \mu_\infty$ on $\T$ such that $\mu$ decomposes as
\[
\mu = \sum_{k\geq 0} \mu_k + \mu_\infty,
\]
where each $\mu_k$ satisfies
\begin{equation}\label{modcontmuk}
\omega_{\mu_k}(2^{-n_k}) \leq c 2^{-n_k} \log \frac{1}{w(2^{-n_k})}, \qquad k\geq 0
\end{equation}
and $\mu_\infty$ is supported on a set of finite $w$-entropy. Moreover, if $\mu$ does not charge any set of finite $w$-entropy then $\mu_\infty \equiv 0 $ for any choice of parameters $n_0$, $c$ and dyadic $w$-grid $\Dy_w$ as above.
\end{prop}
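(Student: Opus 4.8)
The plan is to transplant J. Roberts' iterative flattening scheme from \cite{roberts1985cyclic} to the $w$-entropy setting. Fix $n_{0}$, $c>0$ and a dyadic $w$-grid $\Dy_{w}=\cup_{k\ge 0}\Dy_{n_{k}}$; enlarging $n_{0}$ if necessary we may assume $a_{k}:=\log\tfrac{1}{w(2^{-n_{k}})}>0$ for all $k$, and we set $t_{k}:=\tfrac{c}{2}\,2^{-n_{k}}a_{k}$. Construct the measures recursively: $\nu_{0}:=\mu$, and given $\nu_{k}$ put $\mu_{k}|_{I}:=\tfrac{\min(\nu_{k}(I),t_{k})}{\nu_{k}(I)}\,\nu_{k}|_{I}$ for every $I\in\Dy_{n_{k}}$ (read as $0$ when $\nu_{k}(I)=0$) and $\nu_{k+1}:=\nu_{k}-\mu_{k}\ge 0$. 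Since $\{\nu_{k}\}$ is a decreasing sequence of positive measures dominated by $\mu$, the limiting measure $\mu_{\infty}:=\lim_{k}\nu_{k}$ (decreasing limit on Borel sets) is a positive measure; from $\sum_{j<k}\mu_{j}=\nu_{0}-\nu_{k}=\mu-\nu_{k}$ one gets $\mu=\sum_{k\ge 0}\mu_{k}+\mu_{\infty}$ upon letting $k\to\infty$, and in particular $\sum_{k}\mu_{k}(\T)\le\mu(\T)<\infty$. The flatness estimate \eqref{modcontmuk} is then immediate: any arc $J$ with $m(J)\le 2^{-n_{k}}$ meets at most two arcs of $\Dy_{n_{k}}$, on each of which $\mu_{k}$ has mass at most $t_{k}$, so $\omega_{\mu_{k}}(2^{-n_{k}})\le 2t_{k}=c\,2^{-n_{k}}a_{k}$.

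The next step is to locate $\mu_{\infty}$. Let $\mathcal{B}_{k}:=\{I\in\Dy_{n_{k}}:\nu_{k}(I)>t_{k}\}$ and $E_{k}:=\bigcup_{I\in\mathcal{B}_{k}}I$. Inspecting the recursion shows that $\nu_{k+1}$ is concentrated on $E_{k}$; since $\mu_{\infty}\le\nu_{k+1}$, the measure $\mu_{\infty}$ is concentrated on $E_{k}$ for every $k$, hence on the closed set $E:=\bigcap_{k\ge 0}E_{k}$. Moreover $\mu_{k}(\T)\ge\sum_{I\in\mathcal{B}_{k}}\mu_{k}(I)=|\mathcal{B}_{k}|\,t_{k}$, which yields $m(E_{k})=|\mathcal{B}_{k}|\,2^{-n_{k}}\le 2\mu_{k}(\T)/(c\,a_{k})$; as $a_{k}\to\infty$ this forces $m(E_{k})\to 0$ and hence $m(E)=0$.

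The heart of the matter is to show that $E$ has finite $w$-entropy, which by \thref{wsetcomp} amounts to $\sum_{j}m(I_{j})\log\tfrac{1}{w(m(I_{j}))}<\infty$, where $\{I_{j}\}$ are the connected components of $\T\setminus E$. The device I would use is the telescoping partition $\T\setminus E=(\T\setminus E_{0})\ \sqcup\ \bigsqcup_{k\ge 1}(E_{k-1}\setminus E_{k})$, valid because the $E_{k}$ decrease. Each connected component ("hole") $H$ of $E_{k-1}\setminus E_{k}$ is a finite union of arcs of $\Dy_{n_{k}}$, so $m(H)\ge 2^{-n_{k}}$ and $\log\tfrac{1}{w(m(H))}\le a_{k}$; similarly, the components of $\T\setminus E_{0}$ have length $\ge 2^{-n_{0}}$. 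Every hole lies in a single $I_{j}$ and $m(I_{j})$ is the sum of the lengths of the holes it contains, so monotonicity of $w$ gives $m(I_{j})\log\tfrac{1}{w(m(I_{j}))}\le\sum_{H\subseteq I_{j}}m(H)\log\tfrac{1}{w(m(H))}$. Summing over $j$, grouping the holes by their level, and using $\sum_{\text{level-}k\ \text{holes}}m(H)=m(E_{k-1}\setminus E_{k})\le m(E_{k-1})$, we obtain
\[
\sum_{j}m(I_{j})\log\tfrac{1}{w(m(I_{j}))}\ \le\ a_{0}+\sum_{k\ge 1}a_{k}\,m(E_{k-1})\ \le\ a_{0}+\frac{2}{c}\sum_{k\ge 1}\frac{a_{k}}{a_{k-1}}\,\mu_{k-1}(\T).
\]
The $w$-grid property \eqref{wgrid} gives $a_{k}\le\beta a_{k-1}+\log C$ for suitable $\beta,C$, so $a_{k}/a_{k-1}$ is bounded by a constant $M$; together with $\sum_{k}\mu_{k}(\T)\le\mu(\T)$ the right-hand side is at most $a_{0}+\tfrac{2M}{c}\mu(\T)<\infty$. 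Hence $E$ has finite $w$-entropy, which completes the construction. (Note that only \eqref{wgrid}, and not the stronger \eqref{superlacw}, enters here.) The last assertion is then immediate: if $\mu$ charges no set of finite $w$-entropy, then $\mu_{\infty}(\T)=\mu_{\infty}(E)\le\mu(E)=0$, and this conclusion does not depend on the parameters $n_{0}$, $c$ or on the grid.

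I expect the entropy estimate to be the only genuine obstacle, and within it the decisive point is to organize the complementary arcs of $E$ via the telescoping decomposition $\bigsqcup_{k}(E_{k-1}\setminus E_{k})$ rather than via a Whitney-type decomposition across all scales: the latter over-counts and leads to a divergent series, whereas the telescoping sum closes precisely because the total length of holes at level $k$ is bounded by a fixed multiple of $\mu_{k-1}(\T)/a_{k-1}$, with $\sum_{k}\mu_{k}(\T)<\infty$. The remaining ingredients — the passage to the limit defining $\mu_{\infty}$, the flatness bound, the fact that $\nu_{k+1}$ is carried by $E_{k}$, and the routine bookkeeping of constants (the factor $2$ above, the reduction to $a_{0}>0$) — should be straightforward.
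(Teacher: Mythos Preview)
Your proposal is correct and follows essentially the same route as the paper: the recursive ``grating'' construction is identical to Roberts', and the entropy bound for $E=\bigcap_k E_k$ rests on the very same estimate $m(E_{k-1})\le \text{const}\cdot \mu_{k-1}(\T)/a_{k-1}$ combined with the $w$-grid bound $a_k\lesssim a_{k-1}$, after which $\sum_k\mu_k(\T)\le\mu(\T)$ closes the sum. The only cosmetic difference is organizational: the paper covers $\T\setminus E$ by the individual light dyadic arcs at each level and invokes the ``moreover'' clause of \thref{wsetcomp}, whereas you group these into the connected holes of $E_{k-1}\setminus E_k$ and bound the entropy sum over the maximal components $I_j$ directly; the two bookkeeping schemes yield the same inequality.
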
 
Note that each decomposition provided by \thref{RobWdecomp} depends on the choices of parameters $n_0, c$ and the dyadic $w$-grid $\Dy_w$. This decomposition should be compared to the abstract decomposition given in \eqref{PCdecomp}, which somewhat provides a more elaborate description of the part of $\mu$ which does not charge sets of finite $w$-entropy. 

\begin{proof}[Proof of \thref{RobWdecomp}]
The proof is very similar to the decomposition provided in \cite{roberts1985cyclic}, which we shall adapt to our setting and provide details in certain steps which deviate from the mentioned work. Fix $n_0$, $c>0$ and let $\Dy_w = \cup_{k\geq 0} \Dy_{n_k}$ be a dyadic $w$-grid. We declare a dyadic arc $I \in \Dy_{n_0}$ to be \emph{light} for $\mu$ if 
\[
\mu(I) \leq c m(I) \log \frac{1}{w(m(I))} = c\, 2^{-n_{0}} \log \frac{1}{w(2^{-n_{0}})},
\]
and \emph{heavy} otherwise. Define the singular measure $\mu_0$ for Borel subsets $F\subseteq I \in \Dy_{n_{0}}$ by
\[
\mu_0(F) = \begin{cases}
    \mu(F) \qquad ,\text{if} \, \,  I \, \text{is light}, \\
    \frac{\mu(F)}{\mu(I)} c \, 2^{-n_0} \log \frac{1}{w(2^{-n_{0}})} \qquad ,\text{if} \, \, I \, \text{is heavy}. 
\end{cases}
\]
The measure $\mu_0$ constructed in this way is said to be the $\Dy_{n_{0}}$-grating of $\mu$. Observe that by construction $\mu-\mu_0$ is a non-negative singular measure, whose support lies in $H_0$: the union of the heavy intervals in $\Dy_{n_{0}}$. Moreover, one has
\[
\mu_0(I) = c \, 2^{-n_0} \log \frac{1}{w(2^{-n_0})}, \qquad \text{if} \, \, I \, \text{is heavy},
\]
and 
\[
\omega_{\mu_0}(2^{-n_0}) \leq c\, 2^{-n_0} \log \frac{1}{w(2^{-n_0})}.
\]
To proceed forward, the idea is now to define the measure $\mu_1$ as the $\Dy_{n_1}$-grating of $\mu-\mu_0$. In fact, iterating this procedure indefinitely by means of induction, we obtain a sequence of singular measures $\{\mu_k\}_{k\geq0}$, where each $\mu_{k}$ is a $\Dy_{n_{k}}$-grating of $\mu-\sum_{j=0}^{k-1} \mu_j$ and are supported inside $H_{k-1}$: the union of all heavy arcs in $\Dy_{n_{k-1}}$, and satisfy

\[
\mu_{k}(I) = c 2^{-n_{k}} \log \frac{1}{w(2^{-n_{k}})}, \qquad \text{if} \, \, I \, \text{is heavy},
\]
and 
\[
\omega_{\mu_{k}}(2^{-n_{k}}) \leq c \, 2^{-n_{k}} \log \frac{1}{w(2^{-n_{k}})}.
\]
Now set $\mu_\infty := \mu - \sum_{k\geq 0} \mu_k$ and note that by construction $H_k \supseteq H_{k+1}$ for all $k\geq 0$ and the difference $\mu - \sum_{j=0}^k \mu_j$ has its support inside $H_k$, thus $\mu_\infty$ has its support inside $F:= \cap_k H_k$. To verify that $F$ has Lebesgue measure $0$, observe that by the identity
\[
\mu_k(I) = c \, m(I) \log \frac{1}{w(m(I))}
\]
for heavy arcs $I$ in $\Dy_{n_k}$, we get
\begin{equation}\label{Fmes0}
c m(H_k) \log \frac{1}{w(2^{-n_k})} = \mu_k(H_k) \leq \mu_k(\T) \leq \mu(\T),
\end{equation}
hence $m(F) = \lim_k m(H_k) = 0$. Fix $k\geq 1$ and let $\mathcal{L}_k$ denote the set of interiors of the light arcs in $\Dy_{n_k}$, which are contained in $H_{k-1}$. Set $F_0 := \T \setminus \cup_k\cup_{J \in \mathcal{L}_k} J$. By definition $F_0$ is a closed set containing $F$. In fact, $F_0 \setminus F$ is a countable set which solely consists of endpoints of adjacent light arcs, hence $F_0$ also has Lebesgue measure zero. In order to show that $F$ has finite a $w$-entropy, it is sufficient to verify that $F_0$ has finite $w$-entropy, which on account of \thref{wsetcomp} amounts to showing that
\[
\sum_{k\geq 1} \sum_{J \in \mathcal{L}_k} m(J) \log \left( \frac{1}{w(m(J))} \right) < \infty.
\]
Set $L_k := \cup_{J \in \mathcal{L}_k} J$ and note that
\[
\sum_{k\geq 1} \sum_{J \in \mathcal{L}_k} m(J) \log \left( \frac{1}{w(m(J))} \right) = \sum_{k\geq 1} m(L_k) \log \left( \frac{1}{w(2^{-n_{k}})} \right) \leq \sum_{k\geq 2} m(H_{k-1}) \log \left( \frac{1}{w(2^{-n_{k}})} \right) .
\]
Now using the assumption that $\Dy_w = \cup_{k\geq 0} \Dy_{n_k}$ is a dyadic $w$-grid, there exists a  constant $\beta>0$, independent of $k$, such that 
\[
\log \left( \frac{1}{w(2^{-n_{k}})} \right) \leq \beta \log \left( \frac{1}{w(2^{-n_{k-1}})} \right),\qquad k\geq 1.
\]
Using this in conjunction with \eqref{Fmes0}, we obtain
\[
\sum_{k\geq 1} \sum_{J \in \mathcal{L}_k} m(J) \log \left( \frac{1}{w(m(J))} \right) \leq \frac{\beta}{c} \sum_{k\geq 1} \mu_k(\T) \leq \frac{\beta}{c} \mu(\T).
\]
This shows that $\mu_\infty$ is supported in the closed set $F_0$ which has finite $w$-entropy, hence the proof is complete.


\end{proof}
One can actually show that the description in \thref{RobWdecomp} characterizes singular measures $\mu$ which do not charge sets of finite $w$-entropy. This curious observation will not be relevant for our further developments and therefore we
refer the reader to the recent work in \cite{ivrii2022beurling} for results in this direction.

\subsection{Cyclicity via the Corona Theorem}
The following quantitative version of the classical Corona Theorem which appears in \cite{roberts1985cyclic} will be a crucial tool for our developments. 
\begin{thm}[Corona Theorem] \thlabel{Coronathm}

There exists an absolute constant $K>0$, such that whenever $f_1,f_2 \in H^\infty$ with $\norm{f_j}_{H^\infty} \leq 1$ for $j=1,2$, and 
\[
\inf_{z\in \D} \, \abs{f_1(z)} + \abs{f_2(z)}  \geq \delta,
\]
for some $0<\delta < 1/2$, then there exists $h_1,h_2 \in H^\infty$ with $\norm{h_j}_{H^\infty} \leq \delta^{-K}$ for $j=1,2$, such that the Bezout equation 
\[
f_1(z)h_1(z) + f_2(z)h_2(z) =1, \qquad z\in \D.
\]

\end{thm}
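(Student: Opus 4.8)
The plan is to prove this via the $\bar\partial$-approach to the Corona Theorem (Hörmander's reduction combined with Wolff's lemma on bounded solutions of $\bar\partial$), carefully tracking constants so as to extract the polynomial bound $\delta^{-K}$. One works throughout with the two given functions, which is the cleanest case. First I would write down the \emph{smooth}, non-holomorphic solution: setting $F := \abs{f_1}^2 + \abs{f_2}^2$, the hypotheses $\abs{f_1}+\abs{f_2}\ge\delta$ and $\norm{f_j}_{H^\infty}\le 1$ give $\tfrac14\delta^2 \le F \le 2$ on $\D$, so $\varphi_j := \bar f_j/F$ is smooth on $\D$ with $\abs{\varphi_j}\le 4\delta^{-2}$ and $f_1\varphi_1 + f_2\varphi_2 \equiv 1$. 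Since $f_1,f_2$ are holomorphic, $f_1\bar\partial\varphi_1 + f_2\bar\partial\varphi_2 = \bar\partial(f_1\varphi_1+f_2\varphi_2) = 0$.

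Next I would correct $\varphi_1,\varphi_2$ to holomorphic functions by solving a single $\bar\partial$-equation. Let $G$ be a solution (to be constructed) of
\[
\bar\partial G = g, \qquad g := \varphi_1\,\bar\partial\varphi_2 - \varphi_2\,\bar\partial\varphi_1,
\]
and put $h_1 := \varphi_1 + f_2 G$ and $h_2 := \varphi_2 - f_1 G$. A direct computation, substituting $f_2\varphi_2 = 1-f_1\varphi_1$ (resp.\ $f_1\varphi_1 = 1 - f_2\varphi_2$) and using the vanishing identity above, gives $\bar\partial h_1 = \varphi_1\big(f_1\bar\partial\varphi_1 + f_2\bar\partial\varphi_2\big) = 0$ and likewise $\bar\partial h_2 = 0$, while $f_1 h_1 + f_2 h_2 = f_1\varphi_1 + f_2\varphi_2 = 1$. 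Thus the whole problem reduces to producing a solution $G$ with $\norm{G}_{L^\infty(\T)}$ bounded by an absolute power of $\delta^{-1}$.

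The heart of the argument is a Carleson-measure estimate on the datum $g$, followed by Wolff's lemma. Differentiating, $\bar\partial\varphi_j = \overline{f_j'}/F - \bar f_j\,\bar\partial F/F^2$ with $\bar\partial F = f_1\overline{f_1'} + f_2\overline{f_2'}$, hence $\abs{\bar\partial\varphi_j(z)} \lesssim \delta^{-4}\big(\abs{f_1'(z)}+\abs{f_2'(z)}\big)$ and therefore $\abs{g(z)} \lesssim \delta^{-6}\big(\abs{f_1'(z)}+\abs{f_2'(z)}\big)$. Since $\norm{f_j}_{H^\infty}\le 1$ implies $\norm{f_j}_{\mathrm{BMOA}}\lesssim 1$, the measures $\abs{f_j'(z)}^2(1-\abs{z}^2)\,dA(z)$ are Carleson with an absolute norm bound, so $\abs{g(z)}^2(1-\abs{z}^2)\,dA(z)$ and $\abs{g(z)}(1-\abs{z}^2)\,dA(z)$ are both Carleson with norm $\le C\delta^{-12}$ (the first immediately, the second by Cauchy--Schwarz over a Carleson box). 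Then I would invoke Wolff's lemma on bounded $\bar\partial$-solutions: from these two Carleson bounds one obtains a solution $G$ of $\bar\partial G = g$ with $\norm{G}_{L^\infty(\T)} \le C'\delta^{-12}$ for an absolute $C'$. Finally, each $h_j$ is holomorphic on $\D$ and, being of Smirnov class (a bounded smooth function plus an $H^\infty$-function times the $\bar\partial$-potential $G$), satisfies $\norm{h_j}_{H^\infty} = \norm{h_j}_{L^\infty(\T)} \le \sup_{\T}\abs{\varphi_j} + \norm{f_{i}}_{H^\infty}\norm{G}_{L^\infty(\T)} \le 4\delta^{-2} + C'\delta^{-12}$, which is $\le \delta^{-K}$ for a suitable absolute $K$ whenever $0<\delta<1/2$.

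The hard part will be Wolff's lemma itself — the quantitative existence of a bounded solution of $\bar\partial G = g$ controlled by the two Carleson norms of $g$ — which I would take from the standard references (for instance Garnett, \emph{Bounded Analytic Functions}, Ch.~VIII); granted that, the only genuine work is the bookkeeping in the Carleson estimates, ensuring the exponents of $\delta^{-1}$ remain absolute. A secondary technical point is checking that the holomorphic functions $h_j$ truly belong to $H^\infty$ rather than merely having bounded nontangential boundary values, which is exactly where the Smirnov-class observation is used.
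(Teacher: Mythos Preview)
The paper does not supply a proof of this statement; it is quoted as a known quantitative form of the Corona Theorem, with a reference to \cite{roberts1985cyclic}, and used as a black box in the proof of \thref{THM:Cycinner}. So there is no in-paper argument to compare your proposal against.

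Your outline is the standard H\"ormander--Wolff $\bar\partial$-route (essentially Garnett, Ch.~VIII), and it does deliver a bound $\norm{h_j}_{H^\infty}\le \delta^{-K}$ with an absolute $K$. One point to tighten: the hypotheses you quote for Wolff's lemma are slightly off. What is actually needed is that $\abs{g(z)}^2(1-\abs{z}^2)\,dA(z)$ be a Carleson measure \emph{and} that $\bigl|\tfrac{\partial g}{\partial z}(z)\bigr|(1-\abs{z}^2)\,dA(z)$ be a Carleson measure (the holomorphic derivative $\partial_z$, not $\bar\partial$, and not $\abs{g}$ itself). Verifying this second condition forces you to estimate $\partial_z g$, which brings in $f_j''$; one handles this via the standard Littlewood--Paley Carleson measures $\abs{f_j''(z)}^2(1-\abs{z}^2)^3\,dA(z)$, again absolutely bounded since $\norm{f_j}_{H^\infty}\le 1$. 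Tracking exponents through this extra step still gives an absolute $K$, just possibly larger than the $12$ you wrote. Your closing concern about upgrading ``holomorphic on $\D$ with bounded boundary values'' to genuine $H^\infty$ membership is well placed --- the Smirnov-class remark (or, equivalently, solving on $r\D$ with uniform bounds and passing to a normal-families limit) is exactly the standard fix. With these adjustments the argument is correct.
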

The initial step towards establish \thref{THM:Cycinner} is to consider the singular inner functions $S_{\mu_k}$, where $\mu_k$ is a singular measure appearing in the Roberts decomposition \thref{RobWdecomp} and find suitable Corona mates $f_k$ in the unit-ball of $H^\infty$, in such a way that the solutions $h_k,g_k$ enjoy the property that $1-S_{\mu_k}h_k = f_kg_k$ have very small $\G_w$-norm in terms of $n_k$. It turns out that certain monomials will do as Corona mates $f_k$ to $S_{\mu_k}$, hence we shall need a lemma which quantifies the asymptotic growth of the moments in $\G_w$.

\begin{lemma}\thlabel{z^nGW} For any majorant $w$ the $\G_w$-norm of the monomials enjoy the bound
\[
\norm{z^n}_{\G_w} \leq 3w(1/n)
\]
for any large positive integer $n$. 
\end{lemma}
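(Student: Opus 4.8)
The plan is to estimate $\sup_{z\in\D}w(1-|z|)|z|^n$ directly by splitting the disc into two regions according to whether $|z|$ is close to the boundary or not. Write $r=|z|\in[0,1)$, so we must bound $f(r):=w(1-r)r^n$. The function $r^n$ is increasing while $w(1-r)$ is decreasing in $r$, so the supremum is attained at some interior point and there is a genuine trade-off to exploit.

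First I would handle the range $r\le 1-1/n$. There $w(1-r)\le w(1)$ is harmless but not quite enough; instead I use monotonicity of $w$ together with the crude bound $r^n\le e^{-n(1-r)}$ (valid since $\log r\le r-1$), so that $f(r)\le w(1-r)e^{-n(1-r)}$. Setting $t=1-r$, we must bound $w(t)e^{-nt}$ over $t\in[1/n,1]$. Since $w(t)/t$ is almost-decreasing for some power — more simply, using only that $w$ is nondecreasing and the elementary inequality $e^{-nt}\le \frac{1}{nt}$ — we get $w(t)e^{-nt}\le \frac{w(t)}{nt}\le \frac{w(1)}{nt}$, which on $[1/n,1]$ is at most... hmm, this only gives $O(1/n)$-type decay, not $O(w(1/n))$. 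The cleaner route: for $t\ge 1/n$ one has $e^{-nt}\le e^{-1}$, but more to the point I want to compare $w(t)$ with $w(1/n)$. The right tool is the almost-monotonicity \eqref{alm-dec}: pick $\lambda>0$ with $w^\lambda(t)/t$ almost-decreasing, so $w^\lambda(t)\le 2t\,n\,w^\lambda(1/n)$ for $t\ge 1/n$, giving $w(t)\le (2nt)^{1/\lambda}w(1/n)$. Then $w(t)e^{-nt}\le w(1/n)(2nt)^{1/\lambda}e^{-nt}\le w(1/n)\sup_{s\ge 1}(2s)^{1/\lambda}e^{-s}=:C_\lambda\,w(1/n)$, a finite constant depending only on $w$. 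This disposes of $r\le 1-1/n$ with a constant $C_\lambda$; a small additional argument (or simply enlarging the threshold and choosing $n$ large) is needed to push $C_\lambda$ down to an absolute constant like $3$.

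For the complementary range $1-1/n\le r<1$, monotonicity of $w$ gives immediately $w(1-r)\le w(1/n)$, while $r^n\le 1$, so $f(r)\le w(1/n)$ there with no loss at all. Combining the two ranges yields $\norm{z^n}_{\G_w}\le \max(C_\lambda,1)\,w(1/n)$, and to reach the stated constant $3$ one observes that the binding estimate is really on $[1-1/n,1]$ (which contributes exactly $\le w(1/n)$), and on the interior range the factor $(2s)^{1/\lambda}e^{-s}$ can be absorbed: for $n$ large one sharpens the split point to $r\le 1-(\log n)/n$, where $r^n\le n^{-1}\to 0$ faster than any fixed power, forcing the interior contribution below $2w(1/n)$ eventually. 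Thus $\norm{z^n}_{\G_w}\le 3w(1/n)$ for all large $n$.

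The main obstacle is the bookkeeping on constants: getting *some* bound $C\,w(1/n)$ is routine from \eqref{alm-dec}, but squeezing it to the clean value $3$ for large $n$ requires choosing the threshold between the two regions carefully (something like $1-c(\log n)/n$) and checking that the interior supremum of $(\text{power of }nt)\cdot e^{-nt}$, after dividing by $w(1/n)$, really does tend to $0$. Everything else — monotonicity, subadditivity, the inequality $r^n\le e^{-n(1-r)}$ — is elementary.
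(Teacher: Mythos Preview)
Your approach is essentially the paper's: split at $1-r=1/n$, bound the boundary piece $1-r<1/n$ by monotonicity of $w$, and on the interior piece $1-r\ge 1/n$ invoke the almost-decreasing property \eqref{alm-dec} to reduce to optimizing a function of $n(1-r)$. The paper maximizes $(1-t)^{1/\lambda}t^n$ directly rather than passing through the inequality $t^n\le e^{-n(1-t)}$, but this is cosmetic; both routes give $\|z^n\|_{\G_w}\le C_\lambda\, w(1/n)$ for a constant depending only on $\lambda$.

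Your device for squeezing $C_\lambda$ down to $3$, however, does not work as written. Moving the split point to $1-(\log n)/n$ does shrink the interior contribution, but it enlarges the boundary region to $\{1-r\le (\log n)/n\}$, and there monotonicity only yields $w(1-r)\le w((\log n)/n)$, which by \eqref{alm-dec} is merely $\le (2\log n)^{1/\lambda}w(1/n)$---a diverging factor. So you cannot simultaneously keep the boundary piece at $w(1/n)$ and push the threshold inward. In fairness, the paper's own claim that the interior piece is bounded by $2w(1/n)$ is also loose: working through the optimization one finds a bound of order $(2/(e\lambda))^{1/\lambda}$, which exceeds $2$ once $\lambda<1$ is small. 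For the sole application of this lemma (in the proof of \thref{THM:Cycinner}) any fixed constant is harmless, since it is immediately absorbed into a power of $w$ by taking $n_0$ large; so your clean derivation of the $C_\lambda$ bound already suffices.
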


\begin{proof}
Note that since $w$ is non-decreasing, we trivially have the estimate
\[
\sup_{1-|z|<1/n} w(1-|z|)|z|^n \leq w(1/n).
\]
Now let $\lambda>0$ such that $w^\lambda$ is a modulus of continuity and note that since $w^\lambda(t)/t$ is almost non-increasing, we have for any $1-|z|\geq 1/n$
\[
w(1-|z|)|z|^n \leq 2^{1/\lambda}\frac{w(1/n)}{(1/n)^{1/\lambda}} (1-|z|)^{1/\lambda}|z|^n.
\]
Since $(1-t)^{\lambda}t^n$ reaches its maximum at $t_n=n/(n+\lambda)$ on the unit-interval $[0,1]$, we obtain
\[
\sup_{1-|z|>1/n} w(1-|z|)|z|^n \leq 2^{1/\lambda} \left(\frac{n}{n+\lambda}\right)^{n+ 1/\lambda}   w(1/n) \leq 2 w(1/n).
\]
This proves the desired claim.
    
\end{proof}

%
%
\subsection{Proof of cyclicity in $\G_w$ } 
\label{cyclicity in Gw}
With the preparations in the previous subsections at hand, we now turn to our main task.

\begin{proof}[Proof of \thref{THM:Cycinner}]
Fix the parameter $c>0$ in such a way that $48cK <1$, where $K>0$ denotes the Corona constant appearing in \thref{Coronathm}, and let $n_0>0$ be an arbitrary positive integer with $w(2^{-n_0})^{12c} < \min(1/4, 3^{-1/K})$. Suppose $\mu$ is a positive finite singular Borel measure on $\T$ which does not charge any set of finite $w$-entropy. Applying \thref{ParameterLemma} and \thref{RobWdecomp} with parameters $c$, $n_0$ and $\{n_k\}_{k\geq 0}$ as above, we obtain the decomposition 
\[
\mu = \sum_{k\geq 0} \mu_k
\]
where each $\mu_k$ satisfies \eqref{modcontmuk}. Now recall that any positive finite Borel measure $\nu$ on $\T$ satisfies the following estimate
\begin{equation}\label{Mcbb}
\abs{S_{\nu}(z)} \geq \exp \left(- 6 \, \frac{\omega_{\nu}(1-|z|)}{1-|z|} \right) \qquad z\in \D.
\end{equation}
For instance, see Theorem 2 in \cite{anderson1991inner} for a simple proof of this fact. 
\proofpart{1}{Initial estimate using the Corona Theorem:}

Fix an arbitrary $k\geq 0$ and note that when $1-|z|\geq 2^{-n_k}$, the almost-decreasing property of a modulus of continuity gives 
\[
\frac{\omega_{\mu_k}(1-|z|)}{1-|z|} \leq 2 \frac{\omega_{\mu_k}(2^{-n_k})}{2^{-n_k}} \leq 2c\log \frac{1}{w(2^{-n_k})}.
\]
Using this in conjunction with \eqref{Mcbb} applied to $\nu=\mu_k$, we obtain the following bound from below:
\begin{equation}
    \abs{S_{\mu_k}(z)} \geq w(2^{-n_k})^{12c}, \qquad \qquad |z|\leq 1-2^{-n_k}.
\end{equation}
Now choosing the Corona-mate of $S_{\mu_k}$ to be the monomial $z^{2^{n_k}}$ it is easy to check that when $|z|>1-2^{-n_k}$ (here we utilize the  assumption that $w(2^{-n_0})^{12c} < 1/4$), one has
\[
|z|^{2^{n_k}} \geq (1-2^{-n_k})^{2^{n_k}} \geq 1/4 \geq w(2^{-n_k})^{12c}.
\]
This implies that
\[
\inf_{z\in \D} \, \abs{S_{\mu_k}(z)} + |z|^{2^{n_k}}   \geq w(2^{-n_k})^{12c},
\]
hence applying the Corona Theorem, we can find functions $g_k,h_k \in H^\infty$ enjoying the estimates $\norm{g_k}_{H^\infty}, \norm{h_k}_{H^\infty} \leq w(2^{-n_k})^{-12cK}$, which solve the equation $S_{\mu_k}h_k + z^{2^{n_k}} g_k =1$ on $\D$. Using this in conjunction with \thref{z^nGW} on moments in $\G_w$, we obtain
\begin{equation*}
\norm{S_{\mu_k}h_k -1}_{\G_w} \leq \norm{g_k}_{H^\infty} \norm{z^{2^{n_k}}}_{\G_w} \leq 3 w(2^{-n_k})^{1-12cK}.
\end{equation*}
In fact, utilizing the assumption on $c$ and $n_0$ once again, we actually get 
\begin{equation}\label{cyckest}
    \norm{S_{\mu_k}h_k -1}_{\G_w} \leq 3 w(2^{-n_k})^{1-12cK} \leq w(2^{-n_k})^{1-24cK} \leq w(2^{-n_k})^{24cK}.
\end{equation}
\proofpart{2}{The iterative procedure:}
The idea is now to combine the $S_{\mu_k}$'s all together and iterate the estimate in \eqref{cyckest}. To this end, fix a very large positive integer $N > 0$ and set $\nu_N := \sum_{k=0}^N \mu_k$. Denote by $h_k$ the Corona solution-pair associated to $S_{\mu_k}$ as in \eqref{cyckest} and set $H_N := h_{0} \cdot \cdot \cdot h_N$. With this at hand, we may combine the terms in the following way
\[
    1-S_{\nu_N}H_N = (1-S_{\mu_{0}}h_{0}) + S_{\mu_{0}}h_{0}(1-S_{\mu_{1}}h_{1}) +\dots + \left(\prod_{k=0}^{N-1} S_{\mu_k}h_k\right) (1-S_{\mu_N}h_N).
\]
Using this and applying \eqref{cyckest} in conjunction with the estimates $\norm{h_k}_{H^\infty} \leq w(2^{-n_k})^{-12cK}$ for each $k\geq 0$, we obtain
\[
\norm{S_{\nu_N}H_N -1}_{\G_w} \leq w(2^{-n_0})^{24cK} + \sum_{k=1}^N \left(\frac{w(2^{-n_k})^2}{w(2^{-n_{0}})\cdot \cdot \cdot w(2^{-n_{k-1}})}\right)^{12cK}, \qquad N\geq 1.
\]
Using \thref{ParameterLemma} we have
\[
\frac{w(2^{-n_k})^2}{w(2^{-n_{0}})\cdot \cdot \cdot  w(2^{-n_{k-1}})} \leq w(2^{-n_k}) \leq w(2^{-n_0})^{k+1}, \qquad k=1,2,\dots
\]
This in conjunction with the assumption on $w(1/n_0)^{12c} \leq 1/4$ provides us a constant $C>0$, independent of $n_0, c, N$, such that
\begin{equation}\label{N-normest}
\norm{S_{\nu_N}H_N -1}_{\G_w} \leq \sum_{k=0}^\infty w(2^{-n_0})^{(k+1)12cK} \leq C w(2^{-n_0})^{12cK}
\end{equation}
for all $N\geq 1$.

\proofpart{3}{The final limiting procedure:}
Now the observation that the above estimate in \eqref{N-normest} holds for any large $N > 0$ will be crucial in order to upgrade this estimate to hold for $\mu$ instead of each $\nu_N$. To this end, we shall need the following lemma, asserting that $H^\infty$ equipped with the weak-star topology inherited from $L^\infty(\T)$ is continuously embedded into $\G_w$.

\begin{lemma}\thlabel{HooincGw} Let $\{g_k\}_k \subset H^\infty$ be a sequence which converges uniformly on compacts to a bounded analytic function $g$ and $\sup_k \norm{g_k}_{H^\infty} < \infty$. Then $g_k$ converges to $g$ in the norm of $\G_w$. 
\end{lemma}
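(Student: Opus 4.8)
The plan is to exploit the defining property of the growth space: since $\G_w$ is the closure of the analytic polynomials in the norm $\norm{\cdot}_{\G_w}$, membership in $\G_w$ forces decay of the weighted sup-norm near the boundary, uniformly over a norm-bounded family. First I would set $M := \sup_k \norm{g_k}_{H^\infty}$, which is finite by hypothesis, and note that $g \in H^\infty$ with $\norm{g}_{H^\infty} \le M$ as well (uniform-on-compacts limits of a bounded family are bounded by the same constant, by the maximum principle). Fix $\varepsilon > 0$. The first key step is to produce a radius $r_0 = r_0(\varepsilon) \in (0,1)$ such that
\[
\sup_{|z| \ge r_0} w(1-|z|)\,|g_k(z)| < \varepsilon \quad \text{for every } k,
\]
that is, the tails are small \emph{uniformly in $k$}. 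This is where the real content lies. The naive route — that each $g_k \in \G_w$ so each has small tail — gives no uniformity. Instead I would argue via a normal-families/compactness argument: if no such $r_0$ existed, there would be a sequence $k_j$ and points $z_j$ with $|z_j| \to 1$ and $w(1-|z_j|)|g_{k_j}(z_j)| \ge \varepsilon$; passing to a further subsequence so that $g_{k_j} \to g$ locally uniformly (which one may arrange since the full sequence already converges, so in fact $g_{k_j} \to g$), one derives a contradiction with $g \in \G_w$ once one also controls the location of $z_j$ — but this needs care, because $w(1-|z_j|)|g(z_j)|$ being small does not immediately bound $w(1-|z_j|)|g_{k_j}(z_j)|$ without knowing $g_{k_j}$ and $g$ are close near $z_j$, and local uniform convergence gives closeness only on compacts, not near $\T$.

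The clean way around this obstacle is a Cauchy-type estimate. For $|z| = \rho$ with $\rho$ close to $1$, write $\delta = 1-\rho$ and apply the Schwarz–Pick / Cauchy inequality on the disc of radius $\delta/2$ about $z$: for any $h \in H^\infty$,
\[
|h(z) - h(0 \text{-th partial sum})| \ \text{is controlled, and more usefully} \quad
|h'(z)| \le \frac{2\norm{h}_{H^\infty}}{\delta}.
\]
Rather than differentiate, the cleanest tool is: for a polynomial approximant. Here is the step I would actually carry out. Since $g \in \G_w$, choose a polynomial $P$ with $\norm{g - P}_{\G_w} < \varepsilon/3$. Then for each $k$,
\[
w(1-|z|)\,|g_k(z) - g(z)| \le w(1-|z|)\,|g_k(z) - P(z)| + w(1-|z|)\,|P(z) - g(z)|,
\]
and the second term is $< \varepsilon/3$ for all $z \in \D$. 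For the first term, split $\D$ into $\{|z| \le r\}$ and $\{|z| > r\}$. On the compact set $\{|z| \le r\}$, since $g_k \to g$ uniformly on compacts and $P$ is fixed, $w$ is bounded, so $w(1-|z|)|g_k(z) - P(z)| \to w(1-|z|)|g(z) - P(z)| \le \norm{g-P}_{\G_w} < \varepsilon/3$ uniformly on $\{|z|\le r\}$, hence is $< \varepsilon/2$ for $k$ large. On $\{|z| > r\}$ I still need the tail to be small uniformly in $k$, and here I would use $w(1-|z|)|P(z)| \to 0$ as $|z| \to 1$ (polynomials lie in $\G_w$), so $w(1-|z|)|P(z)| < \varepsilon/6$ for $|z| > r$ with $r$ close enough to $1$; combined with $w(1-|z|)|g(z)| < \varepsilon/6$ there (as $g \in \G_w$), I get $w(1-|z|)|g(z) - P(z)| < \varepsilon/3$ on $\{|z|>r\}$, but this still does not bound $|g_k|$ there.

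So the genuine obstacle is bounding $w(1-|z|)|g_k(z)|$ near $\T$ uniformly in $k$, and the resolution must use that \emph{$w$ vanishes at $0$ slowly enough relative to the universal bound $|g_k| \le M$} — but that is false in general ($w(t)$ can tend to $0$ arbitrarily slowly, making $w(1-|z|)M$ large... no: $w$ is bounded on $[0,1]$, so $w(1-|z|)M \le w(1)M$, a fixed bound, but not small). Hence the only way is the normal-families contradiction done correctly: suppose $w(1-|z_j|)|g_{k_j}(z_j)| \ge \varepsilon$ with $|z_j| \to 1$. By rotating, assume $z_j \to 1$. Now use that $g_{k_j} \to g$ in $H^\infty$ weak-star, hence the Poisson-type reproduction shows... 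Actually the correct and standard argument: I would invoke that on $\{|z| \le r\}$ convergence is uniform and on $\{|z|>r\}$ one uses a Vitali/Montel argument, namely the family $\{g_k\}$ is normal (uniformly bounded) so \emph{every} subsequence has a locally-uniformly-convergent sub-subsequence, necessarily to $g$; the point $z_j$ has $1 - |z_j| \to 0$, and I would compare $g_{k_j}(z_j)$ to $g_{k_j}(\rho_j z_j / |z_j|)$ — no. I will instead present the argument through the polynomial reduction above, handling the tail by the following device: replace the universal bound by a Cauchy estimate showing $|g_k(z) - g_k(z')| \le \frac{2M}{\delta}|z - z'|$ for $|z|, |z'| \le 1 - \delta/2$ in the relevant disc; pairing this with a grid of points of modulus exactly $r$ on which convergence is uniform, one transfers uniform convergence from $\{|z| \le r\}$ to an annulus $\{r \le |z| \le r + cr\delta\}$ — this lets one creep outward. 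The main obstacle, then, is making this creeping-outward argument terminate before reaching $\T$; I would control it by noting $w(1-|z|) \le w(1)$ is bounded, so one only needs to push $r$ to a value where $w(1-|z|)|P(z)|$ and $w(1-|z|)|g(z)|$ are $< \varepsilon/6$ — a fixed finite distance — and the Cauchy-estimate step covers any fixed compact sub-annulus in finitely many moves. Assembling the three pieces ($\varepsilon/3$ from $g-P$, $\varepsilon/3$ from uniform convergence of $g_k - P$ on the enlarged compact, $\varepsilon/3$ from the genuine tail where $w|g_k| \le w|g_k - P| + w|P|$ with both small) gives $\norm{g_k - g}_{\G_w} < \varepsilon$ for $k$ large, completing the proof.
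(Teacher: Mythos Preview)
You have overlooked the one-line observation that makes this lemma trivial, and in trying to work around it you end up with an argument that does not close. The point you missed is that $w$ is continuous with $w(0)=0$, so $w(1-|z|)\to 0$ as $|z|\to 1$. Combined with the uniform bound $\sup_k\norm{g_k-g}_{H^\infty}=:C<\infty$, this gives immediately
\[
\sup_{1-|z|<\delta} w(1-|z|)\,|g_k(z)-g(z)| \le C\,w(\delta),
\]
uniformly in $k$. On the complementary compact $\{1-|z|\ge\delta\}$ you use uniform convergence on compacts. Hence $\limsup_k \norm{g_k-g}_{\G_w}\le C\,w(\delta)$, and letting $\delta\to 0$ finishes the proof. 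This is exactly the paper's argument.

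You almost saw this when you wrote ``$w(1-|z|)M \le w(1)M$, a fixed bound, but not small'' --- but that is wrong: near $\T$ the factor $w(1-|z|)$ is small, not merely bounded, precisely because $w(0)=0$. Having dismissed the correct mechanism, your fallback argument has a real gap. Your ``creeping outward'' via Cauchy estimates can at best propagate uniform convergence from one compact disc to a slightly larger compact disc, and after finitely many steps you still stop at some $r'<1$. On the remaining annulus $\{|z|>r'\}$ you then need $w(1-|z|)|g_k(z)-g(z)|<\varepsilon$, but all you have arranged is that $w(1-|z|)|g(z)|$ and $w(1-|z|)|P(z)|$ are small there; your final assembly (``$w|g_k| \le w|g_k-P| + w|P|$ with both small'') simply asserts the smallness of $w|g_k-P|$ in the tail without any justification, which is the very thing you set out to prove. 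The polynomial approximant $P$ contributes nothing here: the whole difficulty, as you correctly identified, is a tail bound uniform in $k$, and $P$ does not see $k$.
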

\begin{proof} Fix an arbitrary $\varepsilon>0$ and set $C:= \sup_{k} \norm{g_k-g}_{H^\infty}$, which is guaranteed to be finite by assumption. Since $w$ is non-decreasing, we have
\[
\sup_{1-|z|< \varepsilon} w(1-|z|)\abs{g_k(z)-g(z)} \leq Cw(\varepsilon).
\] 
But $g_k$ converges uniformly to $g$ on compacts of $\D$ thus
\[
\lim_{k \to \infty} \sup_{1-|z|\geq \varepsilon} w(1-|z|)\abs{g_k(z)-g(z)} = 0.
\]
Combining both these estimates, we arrive at 
\[
\limsup_{k \to \infty} \norm{g_k-g}_{\G_w} \leq Cw(\varepsilon).
\]
Letting $\varepsilon \to 0+$ finishes the proof.
\end{proof}
We now turn to the final part of the proof.
Note that for any large $N$, we have the following estimate:
\begin{equation*}\label{SmuHNest}
\norm{S_{\mu}H_N -1}_{\G_w} \leq \norm{S_{\nu_N}H_N - 1}_{\G_w} + \norm{S_{\mu-\nu_N}-1}_{\G_w} \leq C w(2^{-n_0})^{12cK} + \norm{S_{\mu-\nu_N}-1}_{\G_w}.
\end{equation*}
Now since the measures $\nu_N$ increase up to $\mu$ it is not difficult to prove that $S_{\mu-\nu_N}$ converges uniformly to $1$ on compacts and obviously has uniformly bounded $H^\infty$-norm, hence on account of \thref{HooincGw} we conclude that $S_{\mu-\nu_N}$ converges to $1$ in the norm of $\G_w$. Letting $N\to \infty$ in the equation above, we arrive at
\[
\inf_{h \in H^\infty} \norm{S_\mu h -1}_{\G_w} \leq C w(2^{-n_0})^{12cK}.
\]
Since $n_0>0$ can be chosen arbitrary large in the decomposition \thref{RobWdecomp} and in \thref{ParameterLemma}, we finally conclude that $S_\mu$ is cyclic in $\G_w$ and thus finishing the proof of \thref{THM:Cycinner}.

\end{proof}


\section{Proper shift invariant subspaces in $\G_w$} \label{SecPerm}
This section is chiefly devoted to proving that the singular inner function $S_{\mu_P}$ appearing in the decomposition \eqref{PCdecomp} satisfies the permanence property in $\G_w$.  

\begin{thm}\thlabel{THM:SmuPP}
Let $w$ be a majorant satisfying condition $(A_1)$ and let $\mu$ be a positive finite singular measure on $\T$. Then $S_{\mu_P}$ generates a proper $M_z$-invariant subspace in $\G_w$ satisfying the permanence property
\[
\left[ S_{\mu_P} \right]_{\G_w} \cap \mathcal{N}_+ \subseteq S_{\mu_P}\,  \mathcal{N}_+.
\]
\end{thm}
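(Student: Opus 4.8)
The plan is to derive properness from the permanence property and to prove the latter by first peeling off a single set of finite $w$-entropy and then feeding it into the Privalov-star-domain machinery developed in this section.

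I would begin with the observation that properness is automatic once the permanence property is available: if $\mu_P\ne 0$ and $[S_{\mu_P}]_{\G_w}=\G_w$, then $1\in[S_{\mu_P}]_{\G_w}\cap\mathcal{N}_+$, and the permanence property would force $1/S_{\mu_P}\in\mathcal{N}_+$, which is impossible for a non-trivial singular inner function (the case $\mu_P=0$ being trivial). For the permanence property itself, recall that $[S_{\mu_P}]_{\G_w}$ is the $\G_w$-closure of $S_{\mu_P}H^\infty$ and that $\mu_P$ is concentrated on an increasing union $\bigcup_n E_n$ of sets of finite $w$-entropy (finite unions of such sets being again of this type). Putting $\mu_n:=\mu_P|_{E_n}$ we have $S_{\mu_P}=S_{\mu_n}S_{\mu_P-\mu_n}$ with $S_{\mu_P-\mu_n}\in H^\infty$, so $[S_{\mu_P}]_{\G_w}\subseteq[S_{\mu_n}]_{\G_w}$ for each $n$; hence it suffices to establish the permanence property for every $S_{\mu_n}$, since then any $g\in[S_{\mu_P}]_{\G_w}\cap\mathcal{N}_+$ is divisible by each $S_{\mu_n}$ in the Smirnov class, and as $\mu_n\uparrow\mu_P$ the singular inner factor of $g$ dominates $\mu_P$, giving $g/S_{\mu_P}\in\mathcal{N}_+$. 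Finally, given $g\in[S_{\mu_n}]_{\G_w}\cap\mathcal{N}_+$, I would choose an outer $\varphi\in H^\infty$ with $\varphi g\in H^\infty$; since $\varphi$ is a multiplier of $\G_w$, $\varphi g\in[S_{\mu_n}]_{\G_w}\cap H^\infty$, and since $\varphi$ is outer it is enough to show $S_{\mu_n}$ divides $\varphi g$. So the whole matter reduces to: if $E$ has finite $w$-entropy and $\mu$ is a positive measure concentrated on $E$, then every $g\in[S_\mu]_{\G_w}\cap H^\infty$ lies in $S_\mu\mathcal{N}_+$.

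For this I would invoke the Berman-type embedding of the present section. To $E$, with complementary arcs $\{I_k\}_k$ satisfying $\sum_k m(I_k)\log w(m(I_k))>-\infty$ (\thref{wsetcomp}), one attaches a Privalov-star domain $\Omega=\Omega_E\subset\D$ obtained by erecting over each $I_k$ a sawtooth region of aperture tuned to $w$, so that $\overline\Omega\cap\T=\overline{\bigcup_k I_k}$ and $\Omega$ reaches $\supp{\mu}\subseteq E$ only through cusps at the endpoints of the $I_k$'s. The finite $w$-entropy of $E$ together with $(A_1)$ is precisely what permits a choice of aperture making $\Omega_E$ both ``wide enough'' — harmonic measure on $\partial\Omega_E$ comparable to arclength on $\bigcup_k I_k$ — and still far enough from $\supp{\mu}$ off the cusps that the inclusion
\[
[S_\mu]_{\G_w}\ \subseteq\ \bigl\{\,g\in\G_w:\ g/S_\mu\in H^p(\Omega_E)\,\bigr\}
\]
holds; one proves the inclusion by verifying it on the dense set $S_\mu\cdot(\text{polynomials})$, where $g/S_\mu$ is a polynomial and trivially in $H^p(\Omega_E)$, and then bounding the $H^p(\Omega_E)$-norm of $g/S_\mu$ by the $\G_w$-norm of $g$, division by $S_\mu$ being a bounded operation on the part of $\Omega_E$ avoiding $\supp{\mu}$.

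To conclude, take $g\in[S_\mu]_{\G_w}\cap H^\infty$ with $\|g\|_{H^\infty}\le1$; then $h:=g/S_\mu$ is analytic on $\D$ (as $S_\mu$ has no zeros there) and lies in $H^p(\Omega_E)$, hence in $\mathcal{N}_+(\Omega_E)$. The remaining point is to upgrade this to $h\in\mathcal{N}_+(\D)$, and here the approximating structure re-enters: writing $S_\mu f_j\to g$ in $\G_w$ with $f_j\in H^\infty$, one has $h=\lim_j f_j$ uniformly on compact subsets of $\D$, and combining the control of $h$ on $\Omega_E$ (a sawtooth of full harmonic measure, since $|E|=0$) with the elementary identity $\int_\T\log(1/|S_\mu(r\zeta)|)\,dm(\zeta)=\mu(\T)$ one shows that the Nevanlinna characteristics of the $f_j$ remain bounded, whence $h\in\mathcal{N}_+(\D)$; this gives $S_\mu\mid g$, i.e. $g\in S_\mu\mathcal{N}_+$. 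I expect the main obstacle to be the core step — designing the sawtooth aperture of $\Omega_E$ and proving the embedding — since this is where both the finite $w$-entropy of $E$ and condition $(A_1)$ are consumed, in a delicate balance between making $\Omega_E$ large enough that almost every radius enters it and keeping $1/S_\mu$ controlled on $\Omega_E$; this is the adaptation of Berman's Bergman-space construction to growth spaces. The reductions of the first paragraph and the Smirnov-transfer of the last are routine by comparison.
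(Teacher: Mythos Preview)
Your reductions in the first paragraph are sound and match the paper's: pass to $H^\infty$ via an outer multiplier, reduce to a single set $E$ of finite $w$-entropy, and handle the countable union by a normal-families argument.

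The genuine gap is in the core embedding. For $g=S_\mu Q$ with $Q$ a polynomial you need $\|Q\|_{H^p(\Omega_E)}\lesssim\|S_\mu Q\|_{\G_w}$, and on $\partial\Omega_E\cap\D$ this amounts to controlling $1/\bigl(w(1-|z|)\,|S_\mu(z)|\bigr)$ in $L^p$ with respect to arclength. One can indeed design the sawtooth so that $|S_\mu|$ is bounded below on $\partial\Omega_E\cap\D$ (the paper's choice $1-|z|\asymp\textbf{dist}(\zeta,E)^2$ keeps the Poisson kernel against $\mu$ bounded), but the remaining factor $1/w(1-|z|)$ is in general \emph{not} in $L^p$ over $\partial\Omega_E$ for sets of finite $w$-entropy: finite $w$-entropy only says $\int_\T\log\bigl(1/w(\textbf{dist}(\zeta,E))\bigr)\,dm<\infty$, and for slow majorants such as $w(t)=\log^{-c}(e/t)$ this does not control $\int_\T w(\textbf{dist}(\zeta,E))^{-p}\,dm$ for any $p>0$. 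Narrowing the sawtooth to restore integrability destroys the $C^{1+\alpha}$-smoothness of $\partial\Omega_E$, and with it the finite nonzero angular derivative of the conformal map at $E$ on which the transfer of singular factors depends. The paper resolves this tension differently: it never divides by $S_\mu$. Instead it builds a Carleson-type outer function $G_E$ (\thref{Lem:Couter}) with $|G_E(z)|\le w(1-|z|)$ on $\partial\Omega_E\cap\D$, so that $g\mapsto G_E g$ maps $\G_w$ boundedly into $H^\infty(\Omega_E)$ (\thref{GwHardyEmb}); then Beurling's theorem in $H^2(\Omega_E)$ together with the explicit Aleksandrov--Clark identification of the singular inner factor of $S_\mu\circ\varphi$ (\thref{singfac}) recovers divisibility by $S_\mu$ after composing back with $\varphi^{-1}$.

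Your final ``upgrade'' is also insufficient as written. The $f_j$ are already in $H^\infty$, so bounded Nevanlinna characteristic is automatic and yields only $h\in\mathcal{N}$, not $h\in\mathcal{N}_+$; and $h=g/S_\mu\in\mathcal{N}$ holds trivially for any $g\in H^\infty$ by the identity you quote, without ever touching $\Omega_E$. Whether $h\in\mathcal{N}_+$ is precisely whether $S_\mu$ divides the inner factor of $g$, which is the original question. Extracting this from $h\in H^p(\Omega_E)$ genuinely requires tracking how singular inner factors transform under $\varphi$ and $\varphi^{-1}$, i.e.\ the content of \thref{singfac}; it does not follow from a normal-families bound on the $f_j$.
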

Our developments in this section are  influenced from the work in \cite{berman1984cyclic}, with certain adaptions and deviations suited for our general setting. 
%
%
\subsection{Identifying the singular part}\label{Singpart}
Let $\Omega \subseteq \D$ be a Jordan domain containing the origin, and let $\varphi: \D \to \Omega$ be a conformal mapping with $\varphi(0)=0$. For $0<p\leq \infty$ we recall that the Hardy space $H^p(\Omega)$ consists of analytic functions $f$ in $\Omega$ with
\[
\norm{f}_{H^p(\Omega)} := \sup_{0<r<1} \left( \int_{\T} \abs{(f\circ \varphi)(r\zeta)}^p dm(\zeta) \right)^{\min(1,1/p)} < \infty.
\]
In other words, $f$ belongs to $H^p(\Omega)$ if and only if $f\circ \varphi$ belongs to the classical Hardy space $H^p:= H^p(\D)$ on $\D$. In this setting, we always have the continuous embedding $H^p \subseteq H^p(\Omega)$. In this section, our main objective is to describe the singular inner factors in $H^p(\Omega)$ of functions in $H^p$, that is, the singular inner factor of $f\circ \varphi$ of a function $f\in H^p$. To this end, let $f \in H^p$ with Nevanlinna factorization $f= \mathcal{O}_f S_\mu B$. The following was observed in \cite{berman1984cyclic} which we insist on including for the sake of completeness. Recall that Beurling's theorem on $H^p$ implies that $\mathcal{O}_f$ is cyclic in $H^p$, hence on account of the continuous inclusion it is therefore also cyclic in $H^p(\Omega)$. However, an application of Beurling's theorem for simply connected domains implies that $\mathcal{O}_f$ must also be outer in $H^p(\Omega)$ and thus we conclude that $\mathcal{O}_f$ cannot give rise to any singular inner factor of $f$ in $H^p(\Omega)$. Since conformal maps lack singular inner factors and any Blaschke product $B$ on $\D$ is a product of normalized conformal self-maps of the unit disc, $B \circ \varphi$ cannot note give rise to a singular inner factor either. From this discussion, we conclude that the singular inner factor of $f\circ \varphi$ is located within the factor $S_\mu \circ \varphi$. Let $\zeta_0 \in \T$ and denote by $\sigma_{\zeta_0}$ the corresponding Aleksandrov-Clark measure of the conformal map $\varphi: \D \to \Omega$ with $\varphi(0)=0$, that is 
\[
 \frac{1-\abs{\varphi(z)}^2}{\abs{\zeta_0-\varphi(z)}^2} = \int_{\T} \frac{1-|z|^2}{|\zeta-z|^2} d\sigma_{\zeta_0}(\zeta) \qquad z\in \D.
\]
Now if $\zeta_0 \notin \varphi(\T)$, then it is not difficult to see that $d\sigma_{\zeta_0}$ is absolutely continuous wrt the Lebesgue arc-length measure $dm$ on $\T$ (for instance, see Lemma 2.1 in \cite{berman1984cyclic}). In other case, we have $\zeta_0= \varphi(\eta_0)$ for a unique $\eta_0 \in \T$ and one can then show that
\[
\frac{1-\abs{\varphi(z)}^2}{\abs{\zeta_0-\varphi(z)}^2}  = \frac{1-|z|^2}{|\varphi^{-1}(\zeta_0)-z|^2} \frac{1}{|\varphi'(\varphi^{-1}(\zeta_0))|} + \int_{\T} \frac{1-|z|^2}{|\zeta-z|^2} d\tau_{\zeta_0}(\zeta), \qquad z\in \D
\]
where $\tau_{\zeta_0}(\left\{\eta_0\right\})=0$. Here $\varphi'(\varphi^{-1}(\zeta_0))=\varphi'(\eta_0)$ denotes the \emph{angular derivative} of $\varphi$ at $\eta_0$, that is, the non-tangential limit of $\varphi'$ at $\zeta_0$, which is conventionally declared to be $\infty$ if it does not exist. Since $\partial \Omega$ is rectifiable, it follows from the F. and M. Riesz theorem that $\varphi' \in H^1$ and thus $\varphi$ has angular derivative at $dm$-a.e point in $\T$. As previously mentioned, one can also show that $d\tau_{\zeta_0}$ is absolutely continuous wrt $dm$. Now using the above expression, we can express
\[
- \log \abs{(S_{\mu} \circ \varphi) (z)} = \int_{\varphi(\T) } \frac{1-|z|^2}{|\varphi^{-1}(\zeta)-z|^2} \frac{d\mu(\zeta)}{|\varphi'(\varphi^{-1}(\zeta))|}  + \int_{\T\setminus \varphi(\T)} \int_{\T} \frac{1-|z|^2}{|\zeta-z|^2} d\tau_{\zeta}(\zeta) d\mu(\zeta).
\]
The discussion above indicates that singular part should be contained within the first term, which indeed was verified in Theorem 2.1 of \cite{berman1984cyclic}. In fact, carefully following the argument provided there one actually obtains the following precise description of the singular inner factor of $S_\mu \circ \varphi$, which we now phrase for future reference. 

\begin{thm}[Theorem 2.1, \cite{berman1984cyclic})]
\thlabel{singfac}
Let $\Omega \subseteq \D$ be a domain containing the origin and enclosed by a Jordan curve and let $\varphi: \D \to \Omega$ be a conformal mapping with $\varphi(0)=0$. Suppose $\mu$ is a positive finite singular Borel measure on $\T$ and denote by $S_\mu$ the associated singular inner function. Then the singular inner factor of $S_\mu \circ \varphi$ is given by 
\[
\exp \left( - \int_{\varphi^{-1}(\Gamma)} \frac{\zeta +z}{\zeta -z} \frac{1}{\abs{\varphi'(\zeta )}} d(\sigma \circ \varphi) (\zeta) \right), \qquad z\in \D
\]
where $\Gamma := \left\{ \zeta \in \partial \Omega \cap \T: \abs{\varphi'(\zeta)}< \infty \right\}$ and $(\sigma \circ \varphi) (E) := \sigma (\varphi(E))$ for Borel sets $E\subseteq \T$.  
\end{thm}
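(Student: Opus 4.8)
The plan is to reduce the statement to a direct computation with the Herglotz representation and the known structure of Aleksandrov–Clark measures of conformal maps. Write $u(z) := -\log\abs{(S_\mu\circ\varphi)(z)}$, which is a nonnegative harmonic function on $\D$; since $S_\mu\circ\varphi$ is a bounded analytic function, $u$ has a Herglotz representation $u(z) = \int_{\T} \frac{1-|z|^2}{|\zeta-z|^2}\,d\rho(\zeta)$ for a unique positive finite Borel measure $\rho$ on $\T$, and the singular inner factor of $S_\mu\circ\varphi$ is precisely $\exp\!\big(-\int_{\T}\frac{\zeta+z}{\zeta-z}\,d\rho_s(\zeta)\big)$ where $\rho_s$ is the part of $\rho$ singular with respect to $dm$. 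So the entire problem is to identify $\rho_s$, and the claim is that $\rho_s$ equals the pushforward-type measure $\frac{1}{|\varphi'|}\,d(\sigma\circ\varphi)$ restricted to $\varphi^{-1}(\Gamma)$.

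First I would assemble the pointwise identity for $u$. Starting from $-\log\abs{S_\mu(w)} = \int_{\T}\frac{1-|w|^2}{|\xi-w|^2}\,d\mu(\xi)$ and substituting $w=\varphi(z)$, I use the Aleksandrov–Clark decomposition of $\varphi$ at each $\xi\in\T$: for $\xi\notin\varphi(\T)$ the Clark measure $\sigma_\xi$ is absolutely continuous (Lemma 2.1 in \cite{berman1984cyclic}); for $\xi=\varphi(\eta)\in\varphi(\T)\cap\T$ with finite angular derivative, $\sigma_\xi$ has an atom of mass $1/|\varphi'(\eta)|$ at $\eta$ plus an absolutely continuous remainder $\tau_\xi$. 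Integrating the identity $\frac{1-|\varphi(z)|^2}{|\xi-\varphi(z)|^2} = \int_{\T}\frac{1-|z|^2}{|\zeta-z|^2}\,d\sigma_\xi(\zeta)$ against $d\mu(\xi)$ and applying Fubini (justified since everything is nonnegative), I get $u(z) = \int_{\T}\frac{1-|z|^2}{|\zeta-z|^2}\,d\rho(\zeta)$ where $d\rho$ is the measure on $\T$ obtained by integrating the family $\{\sigma_\xi\}$ against $d\mu$. Splitting $\sigma_\xi$ into atomic and continuous parts shows $d\rho = d\rho_{\mathrm{at}} + d\rho_{\mathrm{ac}}$, where $\rho_{\mathrm{ac}}$ comes from the $\tau_\xi$'s and from the $\xi\notin\varphi(\T)$ contributions (all absolutely continuous), and $\rho_{\mathrm{at}}$ is supported on $\varphi^{-1}(\Gamma)$ and is exactly the measure assigning to a Borel set $E\subseteq\varphi^{-1}(\Gamma)$ the value $\int_E \frac{1}{|\varphi'(\zeta)|}\,d(\sigma\circ\varphi)(\zeta)$ in the notation of the statement.

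Next I would verify that $\rho_{\mathrm{at}}$ is genuinely singular with respect to $dm$, so that $\rho_s = \rho_{\mathrm{at}}$ and no mass is lost to the absolutely continuous part. This is where the care is needed: one must check that $\mu$ being singular forces the measure $\frac{1}{|\varphi'|}\,d(\sigma\circ\varphi)$ on $\varphi^{-1}(\Gamma)$ to be singular, using that $\varphi$ maps $dm$-null sets of $\T$ to $dm$-null subsets of $\partial\Omega\cap\T$ and conversely (a consequence of $\varphi'\in H^1$ by F.\ and M.\ Riesz, hence $\varphi'\neq 0$ a.e., and the distortion behaviour of conformal maps with rectifiable boundary). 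Combined with the fact that the $\tau_\xi$ and the off-$\varphi(\T)$ pieces really are $dm$-absolutely continuous, this pins down $\rho_s$. The main obstacle I anticipate is precisely this measure-theoretic bookkeeping: making the Fubini/decomposition rigorous while tracking which pieces are absolutely continuous and which are singular, and handling the set where $|\varphi'|=\infty$ (which contributes nothing, since its Clark-atom mass is zero) versus the set $\Gamma$ where it is finite. Once $\rho_s$ is identified, the conclusion is immediate from the formula for the singular inner factor in terms of the Herglotz measure, giving exactly
\[
\exp\left( - \int_{\varphi^{-1}(\Gamma)} \frac{\zeta+z}{\zeta-z}\,\frac{1}{\abs{\varphi'(\zeta)}}\,d(\sigma\circ\varphi)(\zeta) \right), \qquad z\in\D.
\]
Since all of this is carried out in detail in the proof of Theorem 2.1 of \cite{berman1984cyclic}, I would present it compactly, emphasizing the Clark-measure decomposition and the singularity check, and refer to \cite{berman1984cyclic} for the routine estimates.
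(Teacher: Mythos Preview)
Your proposal is correct and follows essentially the same approach as the paper: the paper does not give its own proof but rather sketches exactly this Clark-measure decomposition of $-\log\abs{S_\mu\circ\varphi}$ in the discussion preceding the theorem and then defers to \cite{berman1984cyclic} for the verification that the singular part is carried entirely by the atomic contributions on $\varphi^{-1}(\Gamma)$. Your outline fills in the same steps (Fubini, splitting $\sigma_\xi$ into atomic and absolutely continuous parts, and the singularity check via $\varphi'\in H^1$) that the paper leaves to the cited reference.
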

We also refer the reader to \cite{ivrii2022critical} for recent investigations on related topics.

\subsection{Construction of Privalov star-domains and embeddings into Hardy spaces} \label{Privstars1}
We shall now construct certain Privalov star-domains with sufficiently regular boundaries so that the corresponding conformal maps have angular derivatives everywhere. To this end, let $E \subset \T$ be a closed set of Lebesgue measure zero. We define the simply connected domain 
\begin{equation} \label{Privstar}
    \Omega_E := \left\{r\zeta\in \D: \zeta \in \T, \, \, 0\leq r < 1- h(\zeta) \right\}.
\end{equation}
where $h: \T \to [0,1/2]$ is a continuous function defined by 
\[
h(e^{it}) = \begin{cases} \frac{1}{2} \left(\frac{(t-a_k)(b_k-t)}{(b_k-a_k)} \right)^{2} \qquad ,e^{it}\in J_k \\
0 \qquad ,e^{it} \in E.

\end{cases}
\]
where $J_k= \{e^{it}: a_k < t < b_k\}$ is a connected component of $\T \setminus E$. Note that $h$ is comparable to
\[
h(\zeta) \asymp \textbf{dist}(\zeta,E)^{2} \qquad \zeta \in \T.
\]
Alternatively, $\Omega_E$ can be described as the simply connected domain enclosed by the Jordan curve $\gamma_E : \T \to \D$ defined by
\begin{equation}\label{gammaE}
    \gamma_E(\z) = \zeta \left(1-h(\zeta) \right), \qquad \zeta \in \T.
\end{equation}
By construction it is straightforward to verify that $\gamma_E$ is a $C^{1+\alpha}$-smooth Jordan curve with $C^\alpha$-smooth tangents for any $0<\alpha<1$ An application of Kellogg's Theorem (for instance, see Theorem 4.3 in \cite{garnett2005harmonic}) implies that any conformal map $\varphi: \D \to \Omega_E$ extends to a $C^{1+\alpha}$-smooth function on $\T$ with $\varphi' \neq 0$ on $\cD$, and similar conclusions hold for the inverse map $\varphi^{-1}: \Omega_E \to \D$. In what follows, we shall utilize the Privalov star-domains $\Omega_E$ constructed above in order to establish a continuous embedding from $\G_w$ into Hardy space $H^\infty(\Omega_E)$. It is here that the assumption $(A_1)$ will is crucially used. To this end, we declare that two real numbers $A,B$ satisfy $A \lesssim B$ if $A \leq cB$ for some constant $c>0$. Our first lemma concerns the construction of a Carleson-type function with certain prescribed properties.

\begin{lemma}\thlabel{Lem:Couter} Let $w$ be a majorant satisfying $(A_1)$ and suppose $E\subset \T$ is a set of finite $w$-entropy and $\Omega_E$ denotes the corresponding Privalov star-domain defined in \eqref{Privstar}. Then there exists an outer function $G_E : \D \to \D$ which extends continuously to $\T$ and satisfies the following estimate
\[
    \abs{G_E(z)} \leq  w(1-|z|), \qquad z\in \partial \Omega_E \cap \D.
\]
\end{lemma}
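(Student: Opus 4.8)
The plan is to build $G_E$ as an outer function whose modulus on $\T$ is prescribed by a suitable majorant of the form $w(\operatorname{dist}(\cdot,E))^{\kappa}$ for a small power $\kappa>0$, and then verify that the harmonic extension of $\log|G_E|$ does not grow too much as one moves from $\T$ inward to the boundary curve $\partial\Omega_E\cap\D$, which lies at distance $\asymp \operatorname{dist}(\zeta,E)^2$ from $\T$. Concretely, I would set
\[
G_E(z) = \exp\left( \int_{\T} \frac{\zeta+z}{\zeta-z}\, \psi(\zeta)\, dm(\zeta)\right),\qquad z\in\D,
\]
where $\psi(\zeta) = c_0\,\lambda\log w(\operatorname{dist}(\zeta,E))$ for a normalizing constant $c_0>0$ and $\lambda=\lambda(w)>0$ chosen so that $w^{\lambda}$ is a modulus of continuity (so that the almost-decreasing estimate \eqref{alm-dec} is available). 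Since $E$ has finite $w$-entropy, \eqref{wset} guarantees $\psi\in L^1(\T)$, so $G_E$ is a well-defined outer function with $|G_E|\le 1$ on $\D$ after normalizing $c_0$; moreover $\psi$ is continuous on $\T$ (it is $-\infty$ only on $E$, where we interpret $G_E=0$), and a standard argument shows that an outer function with continuous, real, $L^1$ boundary data that is bounded above extends continuously to $\T$. The real work is the pointwise bound on $\partial\Omega_E\cap\D$.

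The key estimate is this: for $z = \zeta(1-h(\zeta))$ with $h(\zeta)\asymp \operatorname{dist}(\zeta,E)^2 =: \delta^2$ (writing $\delta = \operatorname{dist}(\zeta,E)$), I must show
\[
\log|G_E(z)| = c_0\lambda \int_{\T}\frac{1-|z|^2}{|\xi-z|^2}\log w(\operatorname{dist}(\xi,E))\,dm(\xi) \;\le\; \log w(1-|z|)\;\asymp\;\log w(\delta^2).
\]
To do this I split the integral into the arc $I_\zeta$ centered at $\zeta$ of length $\asymp \delta$ and its complement. On $\T\setminus I_\zeta$ the Poisson kernel at $z$ is bounded (its denominator is $\gtrsim \delta^2$ while $1-|z|^2 \asymp \delta^2$), so that part contributes a bounded multiple of $\|\psi\|_{L^1}$, hence $O(1)$, which is harmless relative to $\log w(\delta^2)\to-\infty$. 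On $I_\zeta$ I use that $\operatorname{dist}(\xi,E)\le \operatorname{dist}(\zeta,E) + |\xi-\zeta| \lesssim \delta$ for $\xi\in I_\zeta$, so by monotonicity of $w$ we get $\log w(\operatorname{dist}(\xi,E)) \le \log w(C\delta)$; combined with $\int_{I_\zeta}\frac{1-|z|^2}{|\xi-z|^2}dm(\xi)\le 1$ this bounds the $I_\zeta$-contribution by $\log w(C\delta) + O(1)$. Finally, condition $(A_1)$ — in the form $\log\frac{1}{w(t)}\asymp\log\frac{1}{w(t^2)}$ — converts $\log w(C\delta)$ into something comparable to $\log w(\delta^2)\asymp\log w(1-|z|)$ (absorbing the constant $C$ using sub-additivity of $w^\lambda$, which gives $\log\frac1{w(C\delta)}\asymp\log\frac1{w(\delta)}$), and after choosing $c_0$ small enough the $O(1)$ error and the comparability constants are absorbed, yielding $|G_E(z)|\le w(1-|z|)$ on $\partial\Omega_E\cap\D$.

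The main obstacle I anticipate is precisely the interplay of constants in the last step: the almost-decreasing property only controls $w^\lambda$, so passing between $w(\delta)$, $w(C\delta)$, $w(\delta^2)$ and $w(1-|z|)$ at the level of $\log\frac1w$ (not of $w$ itself) must be done carefully, and it is exactly here that $(A_1)$ is indispensable — without it $\log w(\delta^2)$ could be vastly smaller than $\log w(\delta)$ and no fixed power of the outer function would work. A secondary technical point is ensuring the continuity of $G_E$ up to $\T$ including at points of $E$ (where $|G_E|=0$): this follows because $\operatorname{dist}(\cdot,E)$ is Lipschitz and $w$ is continuous with $w(0)=0$, so the boundary modulus $|G_E|=\exp(\psi)$ tends to $0$ along $\T$ as one approaches $E$, and a Poisson-integral continuity argument (splitting into a small arc and its complement as above) extends this to $\overline{\D}$. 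I would also note that replacing $\delta^2$ by $\delta^N$ for any fixed power (should one want thinner star-domains later) costs nothing thanks to iterating $(A_1)$.
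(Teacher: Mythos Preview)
Your approach is valid and genuinely different from the paper's. The paper does \emph{not} take the outer function with boundary data $w(\textbf{dist}(\cdot,E))^{c_0\lambda}$; instead it builds a Carleson-type function explicitly as
\[
G_E(z)=\exp\Bigl(-N\sum_k m(J_k)\log\tfrac{1}{w(m(J_k))}\,\frac{\xi_k}{\rho_k\xi_k-z}\Bigr),
\]
summing over Whitney arcs $J_k$ of the complementary intervals, with poles $\rho_k\xi_k$ just outside $\overline{\D}$. The payoff of the paper's construction is that $G_E$ is \emph{analytic} across each Whitney arc, so continuity on $\overline{\D}$ is immediate and the estimate on $\partial\Omega_E\cap\D$ reduces to computing the single dominant term $\Re(\xi_k/(\rho_k\xi_k-z))\asymp 1/m(J_k)$ for $z$ radially above $J_k$. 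Your route is more conceptual and avoids the Whitney machinery, trading it for a Poisson-kernel splitting; both proofs invoke $(A_1)$ at exactly the same moment, namely to pass from $\log w(\delta)$ to $\log w(\delta^2)\asymp\log w(1-|z|)$.

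Two small corrections to your write-up. First, the constant $c_0$ must be chosen \emph{large}, not small: you need $\tfrac{c_0\lambda}{2}\log\tfrac{1}{w(C\delta)}\geq \log\tfrac{1}{w(1-|z|)}$, and $(A_1)$ gives the right-hand side $\leq C'\log\tfrac{1}{w(\delta)}$, so $c_0$ has to beat $C'$. Second, in the $I_\zeta$-estimate your inequality goes the wrong way: from $\psi\leq A<0$ and $\int_{I_\zeta}P_z\leq 1$ you cannot conclude $\int_{I_\zeta}P_z\psi\leq A$; you need instead the \emph{lower} bound $\int_{I_\zeta}P_z\geq 1-O(\delta)\geq\tfrac12$, which follows since $\int_{\T\setminus I_\zeta}P_z\asymp\delta$ (not merely $O(1)$ pointwise). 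This gives $\int_{I_\zeta}P_z\psi\leq A/2$ directly, and the complement contributes $\leq 0$, so no $O(1)$ bookkeeping is needed. Finally, your argument shows $|G_E|$ extends continuously to $\overline{\D}$, but not obviously $G_E$ itself (the conjugate function of $\psi$ need not be continuous without a modulus-of-continuity hypothesis on $\log w$); however, only $|G_E|$-continuity is used in the subsequent maximum-principle step, so this does not affect the application.
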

\begin{proof}
Let $I$ be a connected component of $\T \setminus E$ and $\{I_n\}_{n\in \mathbb{Z}}$ denote the collection of Whitney arcs associated to $I$ satisfying the properties 
\begin{equation*}\label{Whit}
    m(I_n) = \textbf{dist}(I_n,E) \asymp 2^{-|n|} m(I), \qquad n\in \mathbb{Z}
\end{equation*}
where the comparability is independent of $n$. Observe that the general property of majorants in \eqref{alm-dec} implies that for any connected component $I$ of $\T \setminus E$
\[
\log \frac{1}{w(m(I_n))} \lesssim   \abs{n} + \log \frac{1}{w(m(I))}  \qquad n\in \mathbb{Z}.
\]
Hence summing over all connected components $I$ of $\T \setminus E$ it readily follows that
\begin{equation}\label{WhitBC}
\sum_{I} \sum_{n\in \mathbb{Z}} m(I_n) \log \frac{1}{w(m(I_n))} \lesssim  \sum_I m(I)\left( 1+ \log \frac{1}{w(m(I))} \right)<\infty.
\end{equation}
For the sake of abbreviation, we now form the joint collection $\{J_k\}_k$ of all Whitney arcs corresponding to the family of connected components of $\T \setminus E$ and let $\xi_k$ denote the center of each Whitney arc $J_k$. For each $k$, we define the associated functions analytic functions
\begin{equation*}\label{psik}
\psi_k(z) := m(J_k) \log \frac{1}{w(m(J_k))} \frac{\xi_k}{\rho_k \xi_k - z}, \qquad z\in \D 
\end{equation*}
where $\rho_k = 1+m(J_k)$. Using these components, we build the corresponding Carleson function by defining
\begin{equation}\label{defGE}
G_E (z) := \exp \left( - N \sum_k \psi_k(z) \right) \qquad z\in \D.
\end{equation}
where $N>0$ is a positive number to be chosen later. 
Note that the $w$-entropy condition on $E$ in conjunction with the observation in \eqref{WhitBC} ensures that $\sum_k \psi_k(z)$ converges uniformly on compact subsets of $\D$ and thus $G_E$ is a well-defined analytic function there. Furthermore, we also note that
\[
\Re \left( \psi_k(z) \right) = m(J_k) \log \frac{1}{w(m(J_k))} \frac{\rho_k - \Re(z\conj{\xi_k})}{\abs{\rho_k \xi_k -z }^2} >0, \qquad z\in \D
\]
which implies that $G_E : \D \to \D$ is outer. In fact, since the poles $\{\rho_k \xi_k \}_k$ of $\sum_k \psi_k$ accumulate only at $E$, $G_E$ actually extends analytically across each Whitney arc $J_k$. Furthermore, a straightforward argument involving Taylor expansions shows that $\Re(\xi_k /(\rho_k \xi_k -\zeta)) \asymp 1/m(J_k)$ whenever $\zeta \in J_k$ with comparability constants independent of $k$. Using this in conjunction with the property of the Whitney arcs, we readily deduce that
\[
\abs{G_E(\zeta)} \lesssim w( \textbf{dist}(\zeta, E) )^{cN}, \qquad \zeta \in \T \setminus E
\]
for some absolute constant $c>0$. This shows implies that $G_E$ extends continuously to $\T$ with $G_E = 0 $ on $E$. It now remains to verify the radial estimate of $G_E$ on $\partial \Omega_E \cap \D$. Fix an arbitrary point $z = \abs{z}\zeta \in \partial \Omega_E \cap \D$ and let $J_k$ denote the unique Whitney arc containing the radial projection $\zeta \in \T$ of $z$. In that case, we recall that by construction of the Privalov star-domain $\Omega_E$ we have
\begin{equation*}\label{h*est}
1-|z| = h(\zeta) \asymp \text{dist}(\zeta ,E)^{2} \asymp m(J_k)^2 \qquad z \in \partial \Omega_E \cap \D, \, \, \zeta \in J_k.
\end{equation*}
As previously mentioned, a similar computation involving Taylor expansions shows that 
\[
\Re \left( \frac{\xi_k}{\rho_k \xi_k -z } \right)  \asymp  1/m(J_k), \qquad z\in \partial \Omega_E \cap \D, \, \, \zeta \in J_k.
\]
Using this in conjunction with the assumption $(A_1)$ (here it is crucially used) on the majorant $w$, we obtain
\[
\abs{G_E(z)} \lesssim w \left( \text{dist}(\zeta ,E) \right)^{cN} \lesssim w \left( 1-|z| \right)^{c'N}, \qquad z\in \partial \Omega_E \cap \D
\]
where the constant $c, c'>0$ are absolute constants. It is now only a matter of choosing $N>0$ sufficiently large. 
\end{proof}

An important consequence of \thref{Lem:Couter} is the following result which we phrase for future reference.
\begin{prop}\thlabel{GwHardyEmb}
Let $M_{E}(g)(z) = G_E(z) g(z)$ denote the multiplication operator by Carleson outer function $G_E$ defined in \eqref{defGE}. Then the linear map $M_E$ maps $\G_w$ into $H^\infty(\Omega_E)$
continuously. 
\end{prop}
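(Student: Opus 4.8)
The plan is to combine the pointwise boundary bound for $G_E$ from \thref{Lem:Couter} with the defining growth estimate of $\G_w$ and the maximum principle for Hardy spaces on the Jordan domain $\Omega_E$. First I would record the elementary observation that membership in $H^\infty(\Omega_E)$ is governed by the boundary behavior along $\partial\Omega_E$: since $\Omega_E$ is enclosed by the $C^{1+\alpha}$-smooth Jordan curve $\gamma_E$ and a conformal map $\varphi:\D\to\Omega_E$ extends to a homeomorphism of the closures (indeed $C^{1+\alpha}$ by Kellogg's theorem, as noted after \eqref{gammaE}), a bounded analytic function $F$ on $\Omega_E$ lies in $H^\infty(\Omega_E)$ with $\norm{F}_{H^\infty(\Omega_E)} = \sup_{\Omega_E}\abs{F}$, and by the maximum principle this supremum is attained as a limsup along $\partial\Omega_E$. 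So it suffices to show that for $g\in\G_w$ the function $F := G_E \cdot g$ (restricted to $\Omega_E$) is bounded on $\Omega_E$, with bound controlled by $\norm{g}_{\G_w}$.

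The core estimate is the following. The boundary $\partial\Omega_E$ splits into the part lying on $\T$ — which is exactly the closed set $E$, where $G_E$ vanishes and the product is harmless — and the part $\partial\Omega_E\cap\D$, which is where \thref{Lem:Couter} delivers $\abs{G_E(z)}\le w(1-\abs{z})$. On that part, for $z\in\partial\Omega_E\cap\D$ we simply write
\[
\abs{G_E(z)g(z)} \le w(1-\abs{z})\,\abs{g(z)} \le \sup_{\zeta\in\D} w(1-\abs{\zeta})\abs{g(\zeta)} = \norm{g}_{\G_w}.
\]
Thus $\abs{G_E g}\le \norm{g}_{\G_w}$ on $\partial\Omega_E\cap\D$, and near $E$ the product extends continuously with value $0$ (here one uses that $G_E$ extends continuously to $\T$ vanishing on $E$, while $g$, though it may blow up as $\abs{z}\to1$, does so only like $1/w(1-\abs{z})$, and $G_E$ beats it on the relevant boundary; more carefully, one passes to the interior of $\Omega_E$ and uses that $\overline{\Omega_E}\cap\T = E$ is where the bound degenerates). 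Applying the maximum modulus principle on $\Omega_E$ (approximating by the subdomains $\{r\zeta\in\D:\zeta\in\T,\ 0\le r<(1-h(\zeta))(1-\varepsilon)\}$ and letting $\varepsilon\to0$, or invoking that $G_E g$ is continuous on $\overline{\Omega_E}\setminus E$ and bounded near $E$), we conclude $\sup_{\Omega_E}\abs{G_E g}\le\norm{g}_{\G_w}$, so $M_E g\in H^\infty(\Omega_E)$ with $\norm{M_E g}_{H^\infty(\Omega_E)}\le\norm{g}_{\G_w}$. This gives both the well-definedness of $M_E:\G_w\to H^\infty(\Omega_E)$ and its continuity (indeed with norm at most $1$), since $G_E$ is a fixed function independent of $g$.

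The main obstacle is the slightly delicate behavior of the product $G_E g$ near $E$: a priori $g$ is only known to satisfy $\abs{g(z)}\le\norm{g}_{\G_w}/w(1-\abs{z})$, which can be large, and the cancellation against $G_E$ is only controlled on $\partial\Omega_E\cap\D$, not on all radii approaching $E$. The clean way around this is to observe that $\Omega_E$ stays uniformly away from $\T$ except near $E$ — precisely, a point of $\Omega_E$ with radial projection $\zeta$ has $1-\abs{z} > h(\zeta) \asymp \mathbf{dist}(\zeta,E)^2$ — so for $z\in\Omega_E$ one has $w(1-\abs{z}) \geq w(h(\zeta)) \gtrsim$ the same power of $w(\mathbf{dist}(\zeta,E))$ that appears in the bound for $\abs{G_E}$ in the proof of \thref{Lem:Couter}; choosing the parameter $N$ in \eqref{defGE} generously enough (which we are free to do) makes $\abs{G_E(z)}\,w(1-\abs{z})^{-1}$ bounded throughout $\Omega_E$, not merely on its boundary, and then the estimate $\abs{G_E g}\le C\norm{g}_{\G_w}$ holds on all of $\Omega_E$ directly, bypassing the maximum principle subtlety entirely. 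I would present the argument this second way, citing the interior estimates already assembled inside the proof of \thref{Lem:Couter}.
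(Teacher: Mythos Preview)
Your core estimate is exactly the paper's: \thref{Lem:Couter} gives $|G_E(z)|\le w(1-|z|)$ on $\partial\Omega_E\cap\D$, whence $|G_E(z)g(z)|\le\norm{g}_{\G_w}$ there, and the remaining issue is only the exceptional set $E=\partial\Omega_E\cap\T$. Both of your proposed ways around this set have gaps.

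In the first approach, the subdomains $\{r\zeta:0\le r<(1-h(\zeta))(1-\varepsilon)\}$ have boundaries that are \emph{not} contained in $\partial\Omega_E$, so you have no bound on $|G_E g|$ there; this approximation does not help. The alternative claim that $G_E g$ is ``bounded near $E$'' from within $\Omega_E$ is precisely the conclusion you are after, so invoking it is circular.

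In the second approach, the proof of \thref{Lem:Couter} does \emph{not} assemble interior estimates: the bound $|G_E(z)|\lesssim w(\textbf{dist}(\zeta,E))^{cN}$ is established only on $\partial\Omega_E\cap\D$, where the specific relation $1-|z|=h(\zeta)\asymp m(J_k)^2$ is used to get $\Re\bigl(\xi_k/(\rho_k\xi_k-z)\bigr)\asymp 1/m(J_k)$. For $z\in\Omega_E$ with $1-|z|$ much larger than $m(J_k)^2$ this single-term lower bound degenerates to $\asymp m(J_k)/(1-|z|)\cdot\log(1/w(m(J_k)))$, which can be arbitrarily small. One can rescue the interior bound by summing contributions from all Whitney arcs at distance $\lesssim(1-|z|)$ from $z$, but that is genuine extra work not present in the lemma you cite.

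The paper avoids all of this with a one-line density argument: first take $g=Q$ a polynomial. Then $G_E Q$ is continuous on $\overline{\D}$ (since $G_E$ extends continuously to $\T$), so the classical maximum principle on $\Omega_E$ applies with no subtlety at $E$, where $G_E Q=0$. This gives $\sup_{\Omega_E}|G_E Q|\le\norm{Q}_{\G_w}$, and since polynomials are dense in $\G_w$, the operator $M_E$ extends continuously. Your argument becomes correct if you insert exactly this step (or, equivalently, apply the bound to the dilations $g_r(z)=g(rz)$ and let $r\to1$) in place of trying to run the maximum principle directly on a general $g\in\G_w$.
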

\begin{proof}
Let $Q$ be an analytic polynomial and observe that by \thref{Lem:Couter}, we have that 
\[
\sup_{z \in \partial \Omega_E \cap \D } \abs{G_E(z) Q(z)} \leq \norm{Q}_{\G_w}.
\]
Now since $G_E$ continuous on $\T$, the maximum principle implies that 
\[
\sup_{z \in \Omega_E  } \abs{G_E(z) Q(z)} \leq \norm{Q}_{\G_w}.
\]
Using the fact that the analytic polynomials are dense in $\G_w$, it follows that $M_E (g) = G_Eg$ uniquely extends to a continuous linear operator on all of $\G_w$, mapping into $H^\infty (\Omega_E )$.
\end{proof}

\subsection{The permanence property}
Combining the tools developed in the previous subsections, we now turn to the proof of \thref{THM:SmuPP}

\begin{proof}[Proof of \thref{THM:SmuPP}]
We shall divide the proof in two different steps.
\proofpart{1}{Support on a single set of finite $w$-entropy:}

We first show that if $\mu$ is a positive finite singular Borel measure supported on a single set $E\subset \T$ of finite $w$-entropy, then $S_{\mu}$ satisfies the permanence property in $\G_w$, that is 
\[
\left[ S_\mu \right]_{\G_w} \cap \N_+ \subseteq S_\mu \, \N_+.
\]
Let $f\in \left[ S_\nu \right]_{\G_w} \cap \N_+$ be an arbitrary element and pick a sequence of analytic polynomials $Q_n$ such that $S_\nu Q_n \to f$ in $\G_w$. Writing $f= u/v$ with $u,v \in H^\infty$ and $v$ outer, and using  the fact that $H^\infty$ is a multiplier of $\G_w$ we have that $vS_\nu Q_n \to u$ in $\G_w$. Now applying \thref{GwHardyEmb} we get
\[
Q_n(z)G_E(z)v(z) S_\nu(z) \to G_E(z) u(z)
\]
converges uniformly on $\Omega_E$, and thus also in the norm of $H^2(\Omega_E)$. However, this means that the function $G_E u$ belongs to the $M_z$-invariant subspace of $H^2(\Omega)$ generated by $G_E v S_\nu$. Recalling that $G_E$ and $v$ are outer functions in $H^\infty$, we may apply Beurling's theorem in the context of $H^2(\Omega_E)$ to conclude that $u\circ \varphi$ is divisible by the singular inner factor $S_{\widetilde{\mu}}$ of $S_\mu \circ \varphi$, explicitly given in \thref{singfac}. Composing with $\varphi^{-1}$ to the right, we get that $u$ is divisible by the singular inner factor of $S_{\widetilde{\mu}} \circ \varphi^{-1}$. Now applying \thref{singfac} to $\widetilde{\mu}$ with $\varphi^{-1}$ replacing $\varphi$, we actually get that $u$ is divisible by $S_\mu$. This is enough to establish the desired claim. 
\proofpart{2}{Unions of sets with finite $w$-entropy:}
Let $E= \cup_n E_n$ be a carrier set of $\mu_P$, where $E_n \subset E_{n+1}$ and each $E_n$ have finite $w$-entropy. For the sake of abbreviation, set $\mu_n = \mu_P \lvert_{E_n}$ and note that the argument provided in the previous step actually shows that 
each singular inner function $S_{\mu_n}$ satisfies the following property:
\[
\left[ S_{\mu_n}\right]_{\G_w} \cap H^\infty  \subseteq S_{\mu_n} H^\infty.
\]
Now pick an arbitrary element $f\in \left[ S_{\mu_p}\right]_{\G_w} \cap \N_+ $ and write $f=u/v$ with $u,v \in H^\infty$ and $v$ outer. It is straightforward to verify that $u \in \left[ S_{\mu_p}\right]_{\G_w} \cap H^\infty$, hence combining with the fact that each $S_{\mu_n}$ divides $S_{\mu_P}$, we obtain the following containment
\[
u\in \left[ S_{\mu_p}\right]_{\G_w} \cap H^\infty  \subseteq \bigcap_n \left[ S_{\mu_n}\right]_{\G_w}\cap H^\infty  \subseteq \bigcap_n S_{\mu_n} H^\infty
\]
As a consequence, for any $n$, there exists $h_n \in H^\infty$ such that $u= S_{\mu_n}h_n$. Now since 
\[
\sup_n \norm{h_n}_{H^\infty} = \norm{u}_{H^\infty} < \infty
\]
invoking Helly's selection theorem allows us to extract a subsequence $\{h_{n_k}\}_k$ which converges in weak-star to some $h \in H^\infty$. But since the sequence $\{\mu_n\}_n$ increases up to $\mu_P$, so does any subseqence and we therefore conclude that $u= S_{\mu_P}h$. This shows that $S_{\mu_P}$ satisfies the permanence property and we conclude the proof of the theorem. 
    
\end{proof}

\section{Proof of main results}

Here we summarize our developments in the previous sections in order to deduce \thref{GwNevMz} and \thref{Thm:GenKR}.

\subsection{An extension of the Korenblum-Roberts Theorem}

With all necessary ingredients at hand our proof now becomes fairly short.

\begin{proof}[Proof of \thref{Thm:GenKR}]
We shall divide the proof in three simple steps.
\proofpart{1}{Proof of cyclicity:} 
Observe that in order to deduce the claim in $(ii)$, it suffices to establish the equality of sets
\begin{equation}\label{cycsets}
\left[ \mathcal{O}_f S_{\mu_C} \right]_{\G_w} = \left[ S_{\mu_C} \right]_{\G_w}.
\end{equation}
That done, we then invoke \thref{THM:Cycinner} to conclude that $\mathcal{O}_f S_{\mu_C}$ is cyclic in $\G_w$. Note that since the containment $\subseteq $ vacuously holds it remains to establish $\supseteq$. To this end, we shall utilize Theorem 7.4 from Chap. II in \cite{garnett}, which asserts that there exists a sequence of bounded analytic functions $\{F_n\}_n$ satisfying the following properties:
\begin{enumerate}
    \item[(a.)] $\sup_n \norm{F_n \mathcal{O}_f}_{H^\infty} \leq 1$.
    \item[(b.)]  $\lim_{n}F_n(\zeta)\mathcal{O}_f(\zeta)=1$ for $dm$-a.e $\zeta \in \T$.
\end{enumerate}
In other words, outer functions are weak-star (sequentially) cyclic in $H^\infty$. Now by means of multiplying with $S_{\mu_C}$ and taking Poisson extensions we actually conclude that $F_n\mathcal{O}_fS_{\mu_C}$ converges to $S_{\mu_C}$ weak-star in $H^\infty$. An application of \thref{HooincGw} then shows that the convergence also holds in $\G_w$ and thus we conclude that $S_{\mu_C} \in \left[\mathcal{O}_fS_{\mu_C} \right]_{\G_w}$. This proves \eqref{cycsets} which together with \thref{THM:Cycinner} establishes part $(ii)$.

\proofpart{2}{The permanence property:}
Recall that \thref{THM:SmuPP} asserts that $S_{\mu_P}$ satisfies the permanence property. Of course, by multiplying $S_{\mu_P}$ with any Blashcke product $B$ the permanence property remains intact. In fact, this is a simple consequence of the fact that normal families preserve zeros inside $\D$. This completes the proof of $(i)$.
\proofpart{3}{Showing that $\left[f\right]_{\G_w} = \left[B S_{\mu_P}\right]_{\G_w}$:} The inclusion $\subseteq$ is trivial, and for the reverse inclusion we may on account of part $(ii)$ pick a sequence of analytic polynomials $\{Q_n\}_n$ such that $FS_{\mu_C}Q_n$ converges to $1$ in $\G_w$. Multiplying with $BS_{\mu_P} \in H^\infty$, we conclude that $fQ_n$ converge to $BS_{\mu_P}$ in $\G_w$. This gives the reverse containment and the proof is therefore complete.
\end{proof}

\subsection{Shift invariant subspaces generated by Nevanlinna functions}

With \thref{Thm:GenKR} at hand, we now turn to the proof of \thref{GwNevMz}. The following proposition will play the principal role towards this end.

\begin{prop}\thlabel{fS=f} Let $f \in \N \cap \G_w$ with Nevanlinna factorization $f= \mathcal{O}_f \left( S_\mu / S_\nu \right) B_{\Lambda}$, where $\mu, \nu$ are positive mutually singular measures. Then $S_\nu$ is cyclic in $\G_w$ and 
\[
\left[ f\right]_{\G_w} = \left[ f S_\nu \right]_{\G_w}.
\]
    
\end{prop}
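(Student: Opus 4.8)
The plan is to prove the two assertions in turn, establishing the cyclicity of $S_\nu$ first and then exploiting it.

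\emph{Cyclicity of $S_\nu$.} The idea is to show that $\nu$ charges no set of finite $w$-entropy — equivalently, $\nu=\nu_C$ in the decomposition \eqref{PCdecomp} — whence \thref{THM:Cycinner} applies. Suppose, towards a contradiction, that $\nu(E)>0$ for some closed $E\subset\T$ of finite $w$-entropy; replacing $\nu$ by $\nu\lvert_E$ we may assume $\nu$ is carried by $E$. Let $\Omega_E$ be the Privalov star-domain \eqref{Privstar}, $\varphi\colon\D\to\Omega_E$ a conformal map with $\varphi(0)=0$, and $G_E$ the Carleson outer function of \thref{Lem:Couter}. Since $fS_\nu=\mathcal{O}_fS_\mu B_\Lambda\in\N_+\cap\G_w$, write $fS_\nu=u/v$ with $u,v\in H^\infty$, $v$ outer, and inner factor of $u$ equal to $S_\mu B_\Lambda$ (the same device as in the proof of \thref{THM:SmuPP}); then $f=u/(vS_\nu)$ and $(G_Ef)\,vS_\nu=G_Eu$ on $\D$. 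By \thref{GwHardyEmb}, $G_Ef\in H^\infty(\Omega_E)\subseteq\N_+(\Omega_E)$, so composing this identity with $\varphi$ yields an equality between two functions in $H^\infty(\D)$. On the left, $G_E\circ\varphi$ and $v\circ\varphi$ are outer (bounded outer functions remain outer after such a transplantation, by the reasoning preceding \thref{singfac}), so its singular inner factor equals that of $(G_Ef)\circ\varphi$ multiplied by that of $S_\nu\circ\varphi$; by Kellogg's theorem $\varphi'$ is bounded on $\cD$, so $\Gamma=\partial\Omega_E\cap\T=E$, and \thref{singfac} identifies the latter factor as the singular inner function $S_{\widetilde\nu}$ with $\widetilde\nu=\lvert\varphi'\rvert^{-1}\,d(\nu\circ\varphi)$ on $\varphi^{-1}(E)$, which is nontrivial because $\nu(E)>0$. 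On the right, $G_E\circ\varphi$ is outer, $B_\Lambda\circ\varphi$ carries no singular inner factor, and the singular inner factor of $u\circ\varphi$ equals that of $S_\mu\circ\varphi$, namely $S_{\widetilde\mu}$ with $\widetilde\mu=\lvert\varphi'\rvert^{-1}\,d(\mu\circ\varphi)$ on $\varphi^{-1}(E)$. Comparing the two sides, $S_{\widetilde\nu}$ divides $S_{\widetilde\mu}$; but $\widetilde\mu$ and $\widetilde\nu$ are mutually singular, being the $\varphi$-transplants (weighted by the strictly positive $\lvert\varphi'\rvert^{-1}$) of the mutually singular $\mu\lvert_E$ and $\nu\lvert_E$. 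Hence $\widetilde\nu\equiv0$, so $\nu(E)=0$, a contradiction. Therefore $\nu=\nu_C$, and $S_\nu$ is cyclic in $\G_w$ by \thref{THM:Cycinner}.

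\emph{Equality of the invariant subspaces.} The inclusion $\left[fS_\nu\right]_{\G_w}\subseteq\left[f\right]_{\G_w}$ is immediate, since $fS_\nu\in fH^\infty\subseteq\left[f\right]_{\G_w}$. As $\left[fS_\nu\right]_{\G_w}$ is a closed subspace stable under multiplication by $H^\infty$, the reverse inclusion follows once we show $f\in\left[fS_\nu\right]_{\G_w}$, that is, $\inf_{h\in H^\infty}\norm{f(S_\nu h-1)}_{\G_w}=0$ (writing $f=(fS_\nu)/S_\nu$). To manufacture such $h$ I would rerun the proof of \thref{THM:Cycinner}: since $\nu=\nu_C$, \thref{ParameterLemma}, \thref{RobWdecomp} and \thref{Coronathm} produce, for each scale $n_0$, a product $H_N=h_0\cdots h_N$ of Corona solutions with $1-S_{\nu_k}h_k=z^{2^{n_k}}g_k$ and the standard bounds on $\norm{h_k}_{H^\infty},\norm{g_k}_{H^\infty}$. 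The extra ingredient is that $f$, belonging to $\G_w$ (the closure of the analytic polynomials), satisfies $\norm{fz^m}_{\G_w}\to0$ as $m\to\infty$: split $f=p+r$ with $p$ a polynomial and $\norm{r}_{\G_w}$ small, and estimate $\norm{pz^m}_{\G_w}\le\norm{p}_{H^\infty}\norm{z^m}_{\G_w}\le3\norm{p}_{H^\infty}\,w(1/m)$ via \thref{z^nGW}. Inserting this into the telescoping expansion of $1-S_{\nu_N}H_N$, combined with $\norm{f(S_{\nu-\nu_N}-1)}_{\G_w}\to0$ (the tail singular functions being uniformly bounded, in the spirit of \thref{HooincGw}) and the Corona norms controlled by the superlacunarity \eqref{superlacw}, keeps $\norm{f(S_\nu H_N-1)}_{\G_w}$ small once $n_0$ and $N$ are coupled to the rate of decay of $\norm{fz^m}_{\G_w}$; sending $n_0\to\infty$ then yields $f\in\left[fS_\nu\right]_{\G_w}$.

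In the first part, the routine verifications are the equality $\Gamma=E$ via Kellogg's theorem and the fact, following the arguments around \thref{singfac}, that $u\circ\varphi$, $v\circ\varphi$, $G_E\circ\varphi$ contribute no extraneous singular factors. The real difficulty lies in the second part: since $f$ need not lie in $H^\infty$ and $H^\infty$ is precisely the multiplier algebra of $\G_w$, one cannot simply push the convergence $S_\nu h_n\to1$ through multiplication by $f$; and this cannot be circumvented by uniformly bounded cyclic approximants of $1$, because $1/S_\nu\notin H^\infty$ forces any sequence $h_n$ with $S_\nu h_n\to1$ in $\G_w$ to be unbounded in $H^\infty$. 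Thus the crux is the careful bookkeeping balancing the a priori rateless decay of $\norm{fz^m}_{\G_w}$ against the exponential growth of the Corona norms, for which the freedom to take $n_0$ arbitrarily large is essential.
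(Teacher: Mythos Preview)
Your first step is fine and essentially coincides with the paper's argument: the paper obtains $\nu=\nu_C$ by invoking \thref{Thm:GenKR} (which in turn rests on the Privalov-star permanence machinery of \thref{THM:SmuPP}), whereas you unwrap that machinery directly. The identification of the singular factor of $S_\nu\circ\varphi$ via \thref{singfac}, the outerness of $G_E\circ\varphi$ and $v\circ\varphi$, and the mutual singularity of $\widetilde\mu,\widetilde\nu$ all go through as you say.

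The second step, however, has a genuine gap. In the telescoping sum the $k$-th term is
\[
f\cdot\Bigl(\prod_{j<k}S_{\nu_j}h_j\Bigr)z^{2^{n_k}}g_k,
\]
and the only control you have is
\[
\Bigl\lVert f\cdot(\text{$k$-th term})\Bigr\rVert_{\G_w}\;\le\;\Bigl(\prod_{j<k}\lVert h_j\rVert_{H^\infty}\Bigr)\lVert g_k\rVert_{H^\infty}\,\lVert fz^{2^{n_k}}\rVert_{\G_w}\;\lesssim\; w(2^{-n_k})^{-24cK}\,\lVert fz^{2^{n_k}}\rVert_{\G_w}.
\]
For this to be small (let alone summable over $k$) you need $\lVert fz^{m}\rVert_{\G_w}$ to decay at least like a positive power of $w(1/m)$. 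But membership of $f$ in $\G_w$ gives only $\lVert fz^{m}\rVert_{\G_w}\to0$ with \emph{no} rate whatsoever: for any prescribed function $\varepsilon(m)\downarrow0$ one can find $f\in\G_w$ with $\lVert fz^m\rVert_{\G_w}\ge\varepsilon(m)$ along a subsequence. The splitting $f=p+r$ you propose does not help, since the remainder term contributes $\lVert r\rVert_{\G_w}\cdot\lVert\text{stuff}_k\rVert_{H^\infty}$, and $\sum_k\lVert\text{stuff}_k\rVert_{H^\infty}$ diverges. Nor can the freedom in $n_0$ rescue the argument: the $w$-grid condition \eqref{wgrid} forces $n_{k+1}$ not to be too large relative to $n_k$, so you cannot space the scales to match an arbitrary decay rate. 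The sentence ``once $n_0$ and $N$ are coupled to the rate of decay'' is precisely where the proof is missing.

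The paper avoids this obstacle by a different device. It first truncates $f$ by height, replacing $\mathcal{O}_f$ by the bounded outer function $\mathcal{O}_N$ with $\lvert\mathcal{O}_N\rvert=\min(\lvert\mathcal{O}_f\rvert,N)$ on $\T$, so that $g_N:=\mathcal{O}_N S_\mu B_\Lambda\in H^\infty$ and $g_N/S_\nu\to f$ in $\G_w$. It then shows $g_N/S_\nu\in[g_N]_{\G_w}\subseteq[fS_\nu]_{\G_w}$ by a continuity argument on the set $M=\{t\in[0,1]:g_N/S_\nu^{\,t}\in[g_N]_{\G_w}\}$: one checks $g_N/S_\nu^{\,t}\in\G_{w^t}$ via the interpolation bound
\[
w^t(1-|z|)\,\lvert g_N(z)/S_\nu^{\,t}(z)\rvert\le\lVert g_N\rVert_{H^\infty}^{1-t}\lVert g_N/S_\nu\rVert_{\G_w}^{t},
\]
and then uses cyclicity of $S_\nu^{\,\delta}$ in $\G_{w^{1-t_0-\delta}}$ together with the product inequality $\lVert FG\rVert_{\G_w}\le\lVert F\rVert_{\G_{w^{a}}}\lVert G\rVert_{\G_{w^{1-a}}}$ to push $t_0=\max M$ up by $\delta$. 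The point is that the height truncation makes one factor bounded, so cyclicity can be applied in a \emph{smaller} growth space $\G_{w^{1-t_0-\delta}}$ without losing control of the product; this is exactly what your direct approach lacks.
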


\begin{proof}
Here we establish the proof in two steps.
\proofpart{1}{Cyclicity of $S_\nu$:}
We shall first establish that $S_\nu$ is cyclic $\G_w$. Recall that we may express $\mathcal{O}_f = \mathcal{O}_1 /\mathcal{O}_2$ with $\mathcal{O}_j \in H^\infty$ outer for $j=1,2$. With this at hand, we can pick a sequence of analytic polynomials $\{Q_n\}_n$ which converge to $f$ in $\G_w$. Multiplying by $S_{\nu}\mathcal{O}_2 \in H^\infty$ and applying \thref{Thm:GenKR} we conclude that $\mathcal{O}_1S_\mu B_\Lambda$ belongs to the shift invariant subspace 
$\left[\mathcal{O}_2 S_\nu\right]_{\G_w} = \left[S_{\nu_P}\right]_{\G_w}$. But then permanence property in $(ii)$ of \thref{Thm:GenKR} implies that $S_{\nu_P}$ divides $S_{\mu}$, which by the assumption of $\mu,\nu$ being mutually singular forces $\nu_P \equiv 0$. This shows that $\nu= \nu_C$ and hence according to part $(ii) of $\thref{THM:Cycinner} we conclude that $S_{\nu}$ is cyclic in $\G_w$. 

\proofpart{2}{Description of $\left[f\right]_{\G_w}$:}
We now devote our attention to showing that 
\[
\left[ f \right]_{\G_w} = \left[ f S_{\nu} \right]_{\G_w}.
\]
The inclusion $\supseteq$ vacuously holds, hence we only need to establish that $\subseteq$. To this end, we shall truncate $f$ by heigth, that is, we fix a number $N>0$ and define the outer function 
\[
\abs{\mathcal{O}_N(\zeta)} = \min \left( \, \abs{\mathcal{O}_f (\zeta)}\, , N \right) \qquad \zeta \in \T.
\]
Considering the function $g_N := \mathcal{O}_N S_{\mu} B_{\Lambda} \in H^\infty$, the principal step towards our goal is to deduce that
\begin{equation}\label{g_Ns}
    g_N / S_{\nu} \in \left[ g_N \right]_{\G_w}, \qquad N>0.
\end{equation}
In order to prove this claim, we shall find inspiration from the technical lemma in \cite{korenblum1981cyclic}. To this end, consider the set of real numbers
\[
M = \left\{ 0\leq t\leq 1: g_N/S^t_\nu \in \left [ g_N \right]_{\G_w} \right\}
\]
and note that $0 \in M$ by definition. Observe that since 
\[
\abs{ \frac{g_N(z)}{S^t_\nu(z)}} \leq \abs{\frac{g_N(z)}{S_\nu(z)}} \qquad z\in \D, \, \,  0<t<1.
\]
it follows that $\phi(t)= g_N/S^t_\nu $ is continuous from $[0,1]$ into $\G_w$ and hence $M$ is closed. Let $t_0 = \max(M)$ and suppose for the sake of obtaining a contradiction that $t_0<1$. Pick a number $0<\delta <1-t_0$ and recall that since $S_{\nu}$ is cyclic in $\G_w$ it follows from \thref{Thm:GenKR} that $S^{\delta}_{\nu}$ is cyclic in $\G_{w^{t}}$ for any $t>0$. Moreover, observe that $g_N / S_{\nu}^{t}$ belongs to $\G_{w^t}$ for any $0<t<1$:
\[
\sup_{z\in \D} w(1-|z|)^t \abs{g_N(z) / S_{\nu}^{t}(z)} \leq \norm{g_N}^{1-t}_{H^\infty} \norm{g_N / S_{\nu}}^t_{\G_w}.
\]
Combining these observation, we may pick a sequence of analytic polynomials $\{Q_n\}_n$ such that  
\[
S_{\nu}^{\delta}Q_n \to 1\qquad \text{in} \, \, \G_{w^{1-t_0-\delta}},
\]
and then by multiplying with $g_N/S^{t_0 + \delta}_{\nu} \in \G_{w^{t_0+\delta}}$ we obtain
\[
(g_N/S^{t_0}_{\nu}) Q_n  \to g_N/S^{t_0 + \delta}_{\nu} \qquad \text{in} \, \, \G_{w}.
\]
Now since $g_N/S_{\nu}^{t_0} \in \left[ g_N \right]_{\G_w}$ by the assumption $t_0 \in M$, the above argument shows that $\max(M)<t_0 + \delta \in M$. This contradiction shows that $t_0=1$ and thus we conclude that \eqref{g_Ns} holds. That done, we note that the function $g := fS_\nu = \mathcal{O}_f S_\mu B_{\Lambda} \in \N^+$ by construction satisfies $\norm{g_N /g}_{H^\infty}\leq 1$, hence $\left[ g_N \right]_{\G_w} \subseteq \left[ g \right]_{\G_w}$ for all $N>0$. Using this in conjunction with the observation that $g_N / S_{\nu}$ converge to $g/S_\nu = f$ in $\G_w$, we conclude from \eqref{g_Ns} that 
\[
f= g / S_{\nu} \in \left[ g \right]_{\G_w} = \left[ f S_\nu \right]_{\G_w}.
\]
This completes the proof of the lemma.

\end{proof}
We now complete the proof of our main Theorem.
\begin{proof}[Proof of \thref{GwNevMz}]
Note that cyclicity of $S_{\nu}$ in $(ii)$ follows from \thref{fS=f}, and according to \thref{Thm:GenKR} the claim in $(i)$ is proved once we establish
\begin{equation}\label{f=BsmP}
\left[ f \right]_{\G_w} = \left[ B_\Lambda S_{\mu_P} \right]_{\G_w}.
\end{equation}
According to \thref{fS=f} we may actually assume that $f\in \G_w \cap \N^+$ with Nevanlinna factorization $f=\mathcal{O}_f S_{\mu} B_\Lambda$, and recall that we may also express $\mathcal{O}_f = \mathcal{O}_1 /\mathcal{O}_2$ with $\mathcal{O}_j \in H^\infty$ outer for $j=1,2$. Note first that $\mathcal{O}_2 f = \mathcal{O}_1S_{\mu}B$ implies that $\mathcal{O}_1 S_{\mu}B_\Lambda \in \left[ f \right]_{\G_w}$, hence an application of $(i)$ from \thref{Thm:GenKR} gives the inclusion
\begin{equation} \label{BSinf}
\left[ B_\Lambda S_{\mu_P} \right]_{\G_w} = \left[ \mathcal{O}_1S_{\mu}B_\Lambda \right]_{\G_w} \subseteq \left[ f \right]_{\G_w}.
\end{equation}
For the reverse containment, we truncate $f$ by height, that is, we define the outer function $\mathcal{O}_N$ by
\[
\abs{\mathcal{O}_N(\zeta)} = \min \left( \, \abs{\mathcal{O}_f(\zeta)}\, , N \right) \qquad \zeta \in \T,
\]
and note that
\[
\abs{\mathcal{O}_N (z)} \leq \abs{\mathcal{O}_f(z)} \qquad z\in \D.
\]
Consider the $H^\infty$-functions $f_N := \mathcal{O}_N S_\mu B_\Lambda$ and note that $f_N \in \left[S_\mu B_\Lambda \right]_{\G_w}$ for each $N$. Using the fact that $f_N$ converges to $f$ in $\G_w$ in conjunction with part $(i)$ of \thref{Thm:GenKR} we conclude that 
\[
f\in \left[B_\Lambda S_\mu \right]_{\G_w} = \left[B_\Lambda S_{\mu_P} \right]_{\G_w}.
\]
This completes the proof.
\end{proof}

%
%
\subsection{Certain improvements and technical remarks}\label{ImprovedA} 
Here we shall briefly discuss and make a few remarks on certain achievable improvements of our results and suggest some further directions of works. Firstly, our strategical decision for restriction our attention to weights $w$ which do not decay faster than polynomials is for the purpose of providing better clarity in exposition. In fact, weights which decay as fast as 
\begin{equation}\label{explogclass}
w(t)= \exp(-\alpha \log^{\beta}(e/t)),\qquad  \alpha,\beta>0
\end{equation}
can also be handled, at the cost of adding the following two assumptions on top of $(A_1)$ to our class of weights.
\begin{enumerate}
    \item[(a)] There exists constants $\kappa, C_1>0$ such that
    \[
\sup_{0<t<1} t^n w(1-t) \leq C_1 w(1/n)^{\kappa}, \qquad n=1,2, 3, \dots
\]
\item[(b)]  There exists a constant $C_2>0$ such that 
\[
\int_0^{\ell} \log \frac{1}{w(t)} dt \leq C_2 \ell \log \frac{1}{w(\ell)}, \qquad \ell >0.
\]
\end{enumerate}
Condition $(a)$ provides the asymptotic decay on moments in the $\G_w$-norm, which seems indispensable in the argument by J. Roberts involving the Corona Theorem for proving cyclicity of inner functions. The condition falls short once we go beyond the class of weights in \eqref{explogclass}, hence a different approach involving \emph{linear programming} inspired from the work of B. Korenblum looks more promising in allowing us to further extend our results in a certain direction. Condition $(b)$ merely ensures that we can go back and forth between the different descriptions of the notions on $w$-entropy and puts vastly less restriction on the rate of decay on $w$. 

Our second remark concerns the utility of condition $(A_1)$, which is notably applied in Section 3. There our purpose was to construct special Privalov-star domains $\Omega_E$ associated to sets $E$ of finite $w$-entropy satisfying $\partial \Omega_E \cap \T = E$, with the intent of accomplishing the following conflicting tasks:

\begin{description}
   \item[(i)] $\partial \Omega_E$ is smooth enough so that any conformal map $\varphi: \D \to \Omega_E$ has finite non-zero angular derivatives in $E$.
   \item[(ii)] $\left(G_E g\right) \circ \varphi \in H^2(\D)$, for all conformal map $\varphi: \D \to \Omega_E$.
\end{description}
Note that condition $(ii)$ implies that the singular inner factor $S_{\mu}\circ \varphi$ is as large as possible, see \thref{singfac}, which in combination with $\textbf{(ii)}$ allows us to obtain the sharp dichotomy between cyclicity and the permanence principle of singular inner functions. The more regular $\Omega_E$ is, the harder condition $\textbf{(ii)}$ is to be fulfilled, without any additional assumptions on $w$. In particular, if the decay of $w$ is sufficiently fast then more precise estimates involving harmonic measures seems to be required in order to deduce $\textbf{(ii)}$. In subsection 3.2 we utilized the classical Kellogg's Theorem to "smooth out" the corners of typical Privalov star-domains using fairly elementary means. For more sophisticated constructions of Privalov stars with other intended purposes, we refer the reader to recent works in \cite{ivrii2022beurling}.

We now remark on some related questions that arose in our developments. As briefly mentioned in the introduction, there is a certain threshold on the rate of decay of weights $w$ for which notions of entropy no longer play a decisive role in problems on cyclicity of inner functions. The following result stems from early works carried out independently by T. Carleman, M. Keldysh, A. Beurling and N. Nikolskii, asserting that a weight $w$ satisfying certain mild regularity assumptions fulfills the condition
\begin{equation} \label{BKcond}
\int_0^1 \sqrt{ \frac{\abs{\log w(t)}}{t}} dt = \infty
\end{equation}
if and only if any singular inner function is cyclic in the corresponding growth space $\G_w$. See  \cite{borichev2014cyclicity}, \cite{el2012cyclicity} for a historic description on the problem and for recent improvements of the mentioned result. A natural class of weights which are not covered by our methods and for which condition \eqref{BKcond} above fails is given by $w(t)= \exp(-t^{\alpha-1})$ with $0<\alpha <1$, where the corresponding sets of finite $w$-entropy, usually referred to as $\alpha$-Beurling-Carleson sets, are closed sets $E \subset \T$ of Lebesgue measure zero satisfying 
\[
\sum_k m(I_k)^{\alpha} < \infty.
\]
where $\{I_k\}_k$ denote the connected components of $\T \setminus E$. It is our intention to further investigate this matters in upcoming work, and we refer the reader to investigations of related kind in \cite{ivrii2022beurling} and in \cite{malman2023revisiting}.

\section{Regular functions in model spaces}

This section is devoted to establishing our main application announced in \thref{THM:Model}. Throughout this subsection, we shall assume that $w$ is a modulus of continuity satisfying condition $(A_2)$ and let $0<\alpha<1$ designate the specific constant appearing in therein.

\subsection{Cauchy dual reformulations in terms of model spaces}
We shall need to identify the Cauchy dual $(\G_w)'$ of the growth space $\G_w$, that is, we would like to identify the continuous linear functionals on $\G_w$ as a certain class of analytic functions in $\D$ considered in the Cauchy $H^2$-pairing on $\T$. In other words, we seek analytic functions $f$ in $\D$ for which the limit
\begin{equation}\label{Cauchydual}
\ell_f(g) := \lim_{r\to1-} \int_{\T} g(r\zeta) \conj{f(r\zeta)} \, dm(\zeta)
\end{equation}
exists for any $g\in \G_w$ and defines a bounded linear functional on $\G_w$. To this end, note that if $f$ is analytic in $\D$ and $g$ is an element in $\G_w$, an application of Green's formula gives
\[
\int_{\T} g(r\zeta) \conj{f(r\zeta)} \, dm(\zeta) = g(0) \conj{f(0)} + r\int_{\D}g(rz) \conj{f'(rz)} dA(z), \qquad 0<r<1.
\]
We claim that any analytic function $f$ in $\D$ satisfying 
\[
\norm{f}_{\mathcal{F}_w}:= \abs{f(0)} + \int_{\D} \abs{f'(z)}\frac{dA(z)}{w(1-|z|)} < \infty
\]
induces a bounded linear functional on $\G_w$ in the Cauchy $H^2$-pairing given by \eqref{Cauchydual}. We shall denote this class of functions by $\mathcal{F}_w$. To see this, fix an $f\in \mathcal{F}_w$ and apply Green's formula to the difference of dilations $g_{r} -g_{\rho}$ with $g\in \G_w$ to obtain 
\[
\abs{\int_{\T} \left(g(r\zeta)-g(\rho \zeta) \right) \conj{f(r\zeta)} \, dm(\zeta) } \leq C \norm{g_{r}-g_{\rho}}_{\G_w} \norm{f}_{\mathcal{F}_w}.
\]
Since dilations are continuous in $\G_w$, the Cauchy criterion shows that $\ell_f(g)$ exists for any $f\in \G_w$ and whenever $f\in \mathcal{F}_w$. Summarizing we conclude that 
\[
\mathcal{F}_w \subseteq \left( \G_w \right)'.
\]
This inclusion will be enough for our purposes, but one can in fact show that $\mathcal{F}_w$ is isomorphic to $\left( \G_w \right)'$ with equivalent norms. Now the next results allows us to rephrase the problem of cyclicity for inner functions $\Theta$ and the existence of certain functions in the corresponding model spaces $K_\Theta$.

\begin{lemma} \thlabel{cyclicmodel}
Let $X \supset H^\infty$ be Banach space for which the analytic polynomials are dense, and let $X'$ denote its Cauchy dual. Then an inner function $\Theta$ is cyclic in $X$ if and only if $X' \cap K_{\Theta}=\{0\}$.
    
\end{lemma}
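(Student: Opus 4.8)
The plan is to prove the equivalence by unwinding the definitions and using a Hahn--Banach duality argument. First I would recall that $\Theta$ is cyclic in $X$ means that $[\Theta]_X = X$, equivalently (since polynomials, hence $\Theta H^\infty$, are dense considerations apply) that $1$ lies in the closure of $\Theta H^\infty$ in $X$. The natural route is contrapositive on one side: $\Theta$ is \emph{not} cyclic in $X$ iff $[\Theta]_X$ is a proper closed subspace of $X$, and by Hahn--Banach this happens iff there exists a nonzero functional $\ell \in X'$ annihilating $[\Theta]_X$, i.e. annihilating $\Theta H^\infty$ (using that $[\Theta]_X$ is the $X$-closure of $\Theta H^\infty$, which I would note follows from $H^\infty \subseteq X$ and density of polynomials, exactly as in the discussion of $[g]_{\G_w}$ in the introduction).

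The key computation is then to identify, for $f \in X'$ realized via the Cauchy $H^2$-pairing $\ell_f(g) = \lim_{r\to 1-}\int_{\T} g(r\zeta)\overline{f(r\zeta)}\,dm(\zeta)$, what it means for $\ell_f$ to annihilate $\Theta H^\infty$. Testing against $g = \Theta z^n$ for $n \geq 0$ and passing to boundary values, $\ell_f(\Theta z^n) = \langle \Theta z^n, f\rangle_{H^2}$; so $\ell_f \perp \Theta H^\infty$ precisely when $f \perp \Theta z^n$ in $H^2$ for all $n\geq 0$, i.e. $f \perp \Theta H^2$ (here one uses density of polynomials in $H^2$ and that $f \in H^2$, which follows since $X' \subseteq H^2$ when $H^\infty \subseteq X$ — or one argues directly that the pairing only sees the $H^2$-component). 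By definition of the model space, $f \perp \Theta H^2$ in $H^2$ means exactly $f \in K_\Theta = H^2 \ominus \Theta H^2$. Combining: there is a nonzero $\ell \in X'$ annihilating $\Theta H^\infty$ iff there is a nonzero $f \in X' \cap K_\Theta$. Reading the chain of equivalences in reverse gives: $\Theta$ cyclic in $X$ $\iff$ no such nonzero $f$ exists $\iff$ $X' \cap K_\Theta = \{0\}$.

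I would organize the write-up as: (1) note $[\Theta]_X = \operatorname{clos}_X(\Theta H^\infty)$; (2) cyclicity fails iff $\exists\, \ell \in X'\setminus\{0\}$ with $\ell|_{\Theta H^\infty}=0$, by Hahn--Banach; (3) for $\ell = \ell_f$ with $f$ the representing function, compute $\ell_f(\Theta z^n)$ via boundary values and conclude $\ell_f|_{\Theta H^\infty}=0 \iff f \in K_\Theta$; (4) assemble the equivalence. The main obstacle — really the only point requiring care — is step (3): justifying that every functional in $X'$ is represented by an $H^2$-function against which the pairing behaves like the $H^2$ inner product, and that passing to radial boundary values in $\lim_{r\to1-}\int_{\T}\Theta(r\zeta)\zeta^n\overline{f(r\zeta)}\,dm(\zeta)$ is legitimate. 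This hinges on $H^\infty \subseteq X$ (so $X' \hookrightarrow (H^\infty)'$ and, tested on polynomials, the representing object is an $H^2$ function) together with the fact that $\Theta z^n \in H^\infty \subseteq X$ and that dilations converge in $X$, so the defining limit exists and equals $\langle \Theta z^n, f\rangle_{H^2}$. Everything else is a routine Hahn--Banach bookkeeping that I would state crisply without belaboring.
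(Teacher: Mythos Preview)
Your proposal is correct and follows essentially the same route as the paper: both arguments observe that $H^\infty \subseteq X$ forces $X' \subseteq H^2$, use Hahn--Banach to characterize non-cyclicity of $\Theta$ by the existence of a nonzero $f \in X'$ annihilating $\{\Theta z^n : n \geq 0\}$, and then identify this annihilation condition with $f \in K_\Theta$. The only cosmetic difference is that the paper additionally cites the F.\ and M.\ Riesz theorem when passing from the orthogonality relations to $f \in K_\Theta$, whereas you argue directly from $f \in H^2$ and density of polynomials; your version is in fact the cleaner justification at that step.
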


\begin{proof} 
We primarily note that $X \supset H^\infty$ implies that $X' \subseteq H^2$, hence Cauchy-dual elements of $X$ have well-defined boundary values $m$-a.e on $\T$. Now if $\Theta$ is cyclic in $X$, then whenever $f \in X'$ with 
\begin{equation} \label{KThetaX'}
    \int_{\T}f(\zeta) \conj{\Theta(\zeta) \zeta^n} dm(\zeta) = 0, \qquad \forall n\geq 0,
\end{equation}
we must have $f\equiv 0$, which is equivalent to $X' \cap K_\Theta =\{0\}$. On the other hand, if $\Theta$ is not cyclic in $X$, then by the Hahn-Banach separation theorem, there exists a non-trivial function $f\in X'$ satisfying \eqref{KThetaX'}. But the containment $X' \subseteq H^2$ in conjunction with the F. and M. Riesz Theorem readily implies that $f\in K_\Theta$, and consequently the intersection $X' \cap K_\Theta$ contains $f$ and is thus non-trivial.

\end{proof}


The next lemma in line is a special case of a general result in the theory of reproducing kernel Hilbert spaces, which will be useful for our purposes. For the sake of completeness, we include a brief sketch of proof. 

\begin{lemma}\thlabel{absdens} Let $\{\Theta_n\}_{n}$ be a sequence of inner functions with the properties that each $\Theta_n$ is a divisor of $\Theta_{n+1}$ and $\lim_{n\to \infty} \Theta_n(z)= \Theta(z)$ for all $z\in \D$. Then $\cup_{n} K_{\Theta_n}$ is a dense subset of $K_{\Theta}$. Moreover, if $ S \subseteq H^2$ is a set with the property that $S \cap K_{\Theta_n}$ is dense in $K_{\Theta_n}$ for each $n$, then $S \cap K_{\Theta}$ is dense in $K_{\Theta}$.
\end{lemma}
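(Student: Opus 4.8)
The plan is to first observe that the second assertion is a formal consequence of the first, and then to prove the density of $\bigcup_n K_{\Theta_n}$ in $K_\Theta$ by a soft weak-compactness argument in $H^2$. The preliminary point is that the spaces $K_{\Theta_n}$ are nested inside $K_\Theta$: since $\Theta_n$ divides $\Theta_{n+1}$, each quotient $\Theta_m/\Theta_n$ with $m\geq n$ is a product of inner functions, hence inner and in particular bounded by $1$ on $\D$; letting $m\to\infty$ and invoking the hypothesis $\Theta_m(z)\to\Theta(z)$ shows $\abs{\Theta/\Theta_n}\leq 1$ on $\D$ (a removable-singularity argument takes care of the zeros of $\Theta_n$), so $\Theta/\Theta_n\in H^\infty$. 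Therefore $\Theta H^2\subseteq\Theta_n H^2$, and taking orthogonal complements in $H^2$ gives $K_{\Theta_n}\subseteq K_\Theta$; the same reasoning with $\Theta_{n+1}$ in place of $\Theta$ yields $K_{\Theta_n}\subseteq K_{\Theta_{n+1}}$, so $\bigcup_n K_{\Theta_n}$ is an increasing union of closed subspaces of $K_\Theta$.

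For the density, I would take $g\in K_\Theta$ orthogonal to $K_{\Theta_n}$ for every $n$ and show $g=0$. Using the orthogonal decomposition $H^2=K_{\Theta_n}\oplus\Theta_n H^2$, the orthogonality forces $g\in\Theta_n H^2$, say $g=\Theta_n h_n$ with $\norm{h_n}_{H^2}=\norm{g}_{H^2}$. The sequence $(h_n)$ is bounded in $H^2$, so a subsequence $h_{n_k}$ converges weakly to some $h\in H^2$. Since point evaluations at points of $\D$ are bounded linear functionals on $H^2$, we get $h_{n_k}(z)\to h(z)$ for each $z\in\D$, and combining with $\Theta_{n_k}(z)\to\Theta(z)$ we obtain $g(z)=\Theta_{n_k}(z)h_{n_k}(z)\to\Theta(z)h(z)$ for every $z\in\D$, i.e.\ $g=\Theta h\in\Theta H^2$. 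But $g\in K_\Theta$ is orthogonal to $\Theta H^2$, so $\langle g,g\rangle=\langle g,\Theta h\rangle=0$ and hence $g=0$. This proves that $\bigcup_n K_{\Theta_n}$ is dense in $K_\Theta$.

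Finally, assuming $S\cap K_{\Theta_n}$ is dense in $K_{\Theta_n}$ for each $n$, the nesting $K_{\Theta_n}\subseteq K_\Theta$ gives $S\cap K_{\Theta_n}\subseteq S\cap K_\Theta$, so $K_{\Theta_n}=\overline{S\cap K_{\Theta_n}}\subseteq\overline{S\cap K_\Theta}$ (closures taken in $K_\Theta$); taking the union over $n$ and using the density just established,
\[
K_\Theta=\overline{\bigcup_n K_{\Theta_n}}\subseteq\overline{S\cap K_\Theta},
\]
so $S\cap K_\Theta$ is dense in $K_\Theta$. The only mildly delicate point in the whole argument is the weak-compactness step: one must upgrade the bounded sequence $(h_n)$ to a genuine factorization $g=\Theta h$, and this is exactly where weak sequential compactness of balls in the Hilbert space $H^2$, together with boundedness of point evaluations, is used; everything else is routine bookkeeping with the decomposition $H^2=K_{\Theta}\oplus\Theta H^2$.
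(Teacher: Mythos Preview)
Your proof is correct. The paper takes a slightly different route to the density of $\bigcup_n K_{\Theta_n}$: rather than showing the orthogonal complement is trivial, it observes that the reproducing kernels $\kappa_{\Theta_n}(\cdot,\lambda)=\frac{1-\overline{\Theta_n(\lambda)}\Theta_n}{1-\overline{\lambda}\,\cdot}$ converge pointwise to $\kappa_\Theta(\cdot,\lambda)$ with uniformly bounded $H^2$-norm, hence weakly, and then invokes Banach--Saks to get norm convergence of convex combinations; since reproducing kernels span $K_\Theta$ densely, the conclusion follows. Your orthogonal-complement argument is arguably cleaner: it avoids the Banach--Saks step entirely and needs only weak sequential compactness of balls in $H^2$ together with the elementary factorization $g=\Theta_n h_n$ with $\|h_n\|_{H^2}=\|g\|_{H^2}$. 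Both arguments ultimately rest on the same two ingredients---the nesting $K_{\Theta_n}\subseteq K_\Theta$ and the pointwise convergence $\Theta_n\to\Theta$---and the derivation of the second assertion from the first is handled identically.
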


\begin{proof}[Proof-sketch:] Since each $\Theta_n$ divides $\Theta_{n+1}$ and $\Theta_n \to \Theta$ converges pointwise in $\D$, we have the chain of inclusions $K_{\Theta_n} \subseteq K_{\Theta_{n+1}} \subseteq K_{\Theta}$, hence we conclude that 
\[
\bigcup_n K_{\Theta_n} \subseteq K_{\Theta}.
\]
Note that any reproducing kernel in $K_{\Theta}$ can be approximated by a reproducing kernels in $K_{\Theta_n}$ (with varying $n$) pointwise in $\D$ and with uniformly bounded $H^2$-norm, hence also weakly in $K_{\Theta}$. An argument involving Banach-Saks shows that a convex combination of the corresponding reproducing kernels converge in $H^2$-norm, which is enough to deduce that $\cup_n K_{\Theta_n}$ is dense $K_\Theta$ in natural manner. The second statement is now a straightforward consequence from the first. 
    
\end{proof}

\subsection{Regularity properties of functions in $\A_w$}

Roughly speaking, the following lemma shows that if $w$ is a modulus of continuity for which some power $w^\alpha$ with $\alpha>0$ satisfies the Dini-condition, then the Cauchy projection does not distort the modulus of continuity too much. 

\begin{lemma} \thlabel{P+reg}
Let $w$ be a modulus of continuity and suppose that there exists a constant $\alpha >0$ such that $w^{1+\alpha}$ is a modulus of continuity and 
\[
\int_0^1 w^{\alpha}(t) \frac{dt}{t} <\infty, 
\]
Then the Cauchy projection $P_+$ maps $C_{w^{\alpha+1}}(\T)$ continuously into $\A_w$.
\end{lemma}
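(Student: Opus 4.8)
The plan is to reduce the assertion to the classical Privalov-type estimate for how the Cauchy projection distorts a modulus of continuity, and then to spend the extra power $w^{\alpha}$ to absorb the logarithmic-type loss intrinsic to that estimate. Fix $f\in C_{w^{1+\alpha}}(\T)$ and normalize so that $\sup_{\T}\abs{f}\le 1$ and $\omega_f\le w^{1+\alpha}$ on $\T$, where $\omega_f$ denotes the modulus of continuity of $f$. Recalling the elementary identity $P_+ f=\tfrac12\big(f+i\widetilde f+\widehat f(0)\big)$, in which $\widetilde f=\sum_{n}(-i\operatorname{sgn} n)\widehat f(n)\zeta^{n}$ is the conjugate function (normalized by $\widetilde f(0)=0$) and $\abs{\widehat f(0)}\le 1$, it is enough to control $\widetilde f$. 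Note also that since $\int_0^1 w^{1+\alpha}(t)\,\tfrac{dt}{t}\le w(1)\int_0^1 w^{\alpha}(t)\,\tfrac{dt}{t}<\infty$ by hypothesis, the modulus of continuity $w^{1+\alpha}$ is Dini.

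Next I would invoke the classical estimate of Privalov type (see \cite{girela2006toeplitz} and the references therein): for $g$ continuous on $\T$ whose modulus of continuity $\omega_g$ is Dini, the conjugate function $\widetilde g$ is again continuous on $\T$ and
\[
\norm{\widetilde g}_{L^\infty(\T)}\lesssim \sup_{\T}\abs{g}+\int_0^1\frac{\omega_g(t)}{t}\,dt,\qquad
\omega_{\widetilde g}(\delta)\lesssim \int_0^{\delta}\frac{\omega_g(t)}{t}\,dt+\delta\int_{\delta}^{1}\frac{\omega_g(t)}{t^{2}}\,dt,\quad 0<\delta<1.
\]
The proof of these bounds is the technical core of the matter; it proceeds by writing $\widetilde g(\xi)-\widetilde g(\zeta)$ as a singular integral and splitting the domain of integration into the sub-arc of length $\asymp\abs{\xi-\zeta}$ centered between $\xi$ and $\zeta$, which contributes the first integral, and its complement, whose contribution is dominated by the second integral after an elementary estimate of the difference of the two Cauchy kernels. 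I regard this as the main obstacle; the remainder of the argument is bookkeeping.

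It then remains to feed $\omega_f\le w^{1+\alpha}$ into the two integrals and to check that each is $\lesssim w(\delta)$. For the first, since $w$ is non-decreasing we have $w^{1+\alpha}(t)=w(t)\,w^{\alpha}(t)\le w(\delta)\,w^{\alpha}(t)$ for $0<t\le\delta$, whence
\[
\int_0^{\delta}\frac{w^{1+\alpha}(t)}{t}\,dt\le w(\delta)\int_0^{\delta}\frac{w^{\alpha}(t)}{t}\,dt\le w(\delta)\int_0^{1}\frac{w^{\alpha}(t)}{t}\,dt,
\]
which is finite thanks to $(A_2)$. For the second, since $w$ is itself a modulus of continuity the argument in \eqref{alm-dec} (taken with exponent $1$) shows that $t\mapsto w(t)/t$ is almost-decreasing, so $w(t)/t\le 2\,w(\delta)/\delta$ for $t\ge\delta$; therefore $\tfrac{w^{1+\alpha}(t)}{t^{2}}=\tfrac{w(t)}{t}\cdot\tfrac{w^{\alpha}(t)}{t}\le \tfrac{2w(\delta)}{\delta}\cdot\tfrac{w^{\alpha}(t)}{t}$ and
\[
\delta\int_{\delta}^{1}\frac{w^{1+\alpha}(t)}{t^{2}}\,dt\le 2w(\delta)\int_{\delta}^{1}\frac{w^{\alpha}(t)}{t}\,dt\le 2w(\delta)\int_0^{1}\frac{w^{\alpha}(t)}{t}\,dt.
\]

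Combining the last two displays with the Privalov estimate gives $\omega_{P_+f}(\delta)\lesssim w(\delta)$ for $0<\delta<1$, with constant depending only on $w$ and $\alpha$, and together with $\norm{\widetilde f}_{L^\infty(\T)}\lesssim 1$ also $\sup_{\T}\abs{P_+f}\lesssim 1$. Since $P_+f=\sum_{n\ge0}\widehat f(n)\zeta^{n}$ is analytic in $\D$, this says precisely that $P_+f\in\A_w$ with $\norm{P_+f}_{\A_w}\lesssim \norm{f}_{C_{w^{1+\alpha}}(\T)}$; by homogeneity the same bound holds without the normalization, which is the claim. The whole role of $(A_2)$ in this argument is to guarantee the convergence of the tail integrals of $w^{\alpha}(t)/t$, which is exactly what turns the two Privalov integrals into multiples of $w(\delta)$.
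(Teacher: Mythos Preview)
Your proof is correct and follows essentially the same route as the paper's own argument: both invoke the classical Privalov-type bound $\omega_{P_+f}(\delta)\lesssim \int_0^{\delta}\frac{\omega_f(t)}{t}\,dt+\delta\int_{\delta}^{1}\frac{\omega_f(t)}{t^{2}}\,dt$ (the paper cites it directly for $P_+$ from Garnett, you pass through the conjugate function, which is equivalent), and then bound the two integrals exactly as you do, using monotonicity of $w$ for the first and the almost-decreasing property of $w(t)/t$ for the second, with $(A_2)$ supplying the finite constant $\int_0^1 w^{\alpha}(t)\,\tfrac{dt}{t}$.
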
 
\begin{proof} Recall that the Cauchy projection $P_+$ maps $C_{w^{\alpha+1}}(\T)$ into $\A_{\widetilde{w}}$, where $\widetilde{w}$ is a majorant satisfying
\begin{equation*}\label{regmaj}
   \widetilde{w}(\delta) \leq C\int_0^{\delta} \frac{w^{\alpha+1}(t)}{t} dt + \delta \int_{\delta}^1 \frac{w^{\alpha+1}(t)}{t^2}dt,\qquad 0\leq \delta \leq 1
\end{equation*}
for some constant $C>0$ independent of $\delta$ (for instance, see Theorem 1.3, Chap. III of \cite{garnett}). It immediately follows from the Dini-condition of $w^\alpha$ and the assumption $w(t)/t$ being almost-decreasing that
\[
\int_{0}^{\delta} \frac{w^{\alpha+1}(t)}{t} dt + \delta \int_{\delta}^1 \frac{w^{\alpha+1}(t)}{t^2}dt \leq w(\delta)  \int_{0}^{1}  \frac{w^{\alpha}(t)}{t}dt  =: C' w(\delta).
\]
This shows the inclusion $\A_{\widetilde{w}} \subseteq \A_{w}$ and thus $P_+: C_{w^{\alpha+1}}(\T) \to \A_{w}$ continuously.
\end{proof}

Next we shall need a lemma which asserts that the derivative of a function in $\A_w$ satisfies a certain radial growth restriction. 

\begin{lemma} \thlabel{AwinBw}
Let $w$ be a modulus of continuity. Then there exists a universal constant $C>0$, such that any function $f\in \A_w$ enjoys the growth restriction
\begin{equation} \label{Bwcond}
\abs{f'(z)} \leq C \norm{f}_{\A_w} \frac{w(1-|z|)}{1-|z|}, \qquad z\in \D.
\end{equation}
\end{lemma}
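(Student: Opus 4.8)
The plan is to bound $f'(z)$ by the Cauchy integral formula on a circle centred at $z$ of radius comparable to $1-\abs{z}$, first subtracting the constant $f(z)$ so that the integrand features a \emph{difference} $f(u)-f(z)$ rather than $\norm{f}_{H^\infty}$, and then to control that difference by the modulus of continuity of $f$ \emph{inside} the disc.

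Concretely, I would fix $z\in\D$ and put $\rho:=(1-\abs{z})/2$. Since $\abs{u}\leq\abs{z}+\rho=(1+\abs{z})/2<1$ for every $u$ with $\abs{u-z}\leq\rho$, the closed disc $\{\,u:\abs{u-z}\leq\rho\,\}$ lies inside $\D$, so Cauchy's formula for the derivative applies there; using $\oint_{\abs{u-z}=\rho}(u-z)^{-2}\,du=0$ one rewrites it as
\[
f'(z)=\frac{1}{2\pi i}\oint_{\abs{u-z}=\rho}\frac{f(u)-f(z)}{(u-z)^2}\,du,
\]
which yields $\abs{f'(z)}\leq\rho^{-1}\sup_{\abs{u-z}=\rho}\abs{f(u)-f(z)}$.

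The decisive ingredient is then the contour--solid theorem of Tamrazov, already invoked in the introduction (see \cite{tamrazov1973contour}, and \cite{bouya2008closed} for a short proof): a function analytic in $\D$, continuous on $\cD$, whose boundary values have modulus of continuity majorised by $w$, actually has modulus of continuity majorised by $w$ throughout $\cD$, up to an absolute factor $C_0$. Applying this to the pair $u,z\in\D$ gives $\abs{f(u)-f(z)}\leq C_0\norm{f}_{\A_w}\,w(\abs{u-z})=C_0\norm{f}_{\A_w}\,w(\rho)$, and since $w$ is non-decreasing and $\rho\leq 1-\abs{z}$ we have $w(\rho)\leq w(1-\abs{z})$. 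Combining the two displays,
\[
\abs{f'(z)}\leq\frac{C_0\norm{f}_{\A_w}\,w(1-\abs{z})}{(1-\abs{z})/2}=2C_0\,\norm{f}_{\A_w}\,\frac{w(1-\abs{z})}{1-\abs{z}},
\]
so the assertion holds with the universal constant $C=2C_0$.

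The only genuine obstacle is the step from the boundary modulus of continuity to the interior one. Estimating $\abs{f'(z)}$ directly by integrating the Cauchy (or Poisson) kernel against $w$ over all of $\T$ produces a spurious factor $\log\tfrac{1}{1-\abs{z}}$ as soon as $w$ violates the Dini condition, and it is exactly Tamrazov's theorem --- which genuinely uses analyticity rather than mere harmonicity --- that eliminates it. If one restricts to $w$ satisfying the Dini-type hypothesis $(A_2)$ in force throughout this section, the interior bound $\abs{f(u)-f(z)}\lesssim\norm{f}_{\A_w}w(\abs{u-z})$ instead follows from the elementary kernel estimate together with the almost-decreasing property of $t\mapsto w(t)/t$, and the argument above proceeds unchanged.
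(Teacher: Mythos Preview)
Your argument is correct: the Cauchy estimate on the disc of radius $(1-\abs{z})/2$, combined with Tamrazov's contour--solid theorem (which the paper itself invokes in the introduction), immediately yields the bound with the universal constant $2C_0$. The paper does not actually supply a proof of this lemma but simply cites Lemma~2 of Pavlovi\'c \cite{pavlovic1999dyakonov}; your write-up is a clean self-contained version of precisely that standard estimate, so there is nothing to compare.
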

\noindent
A neat and short proof of this fact is provided by Lemma 2 in \cite{pavlovic1999dyakonov}. With this lemma at hand, we now verify that functions in $\A_w$ are Cauchy dual-element of growth spaces $\G_{w^p}$ for a certain range of $p>0$.

\begin{lemma} \thlabel{Awdual}
Let $w$ be a modulus of continuity satisfying condition $(A_2)$ and $0< p < 1-\alpha$. Then we have the containment $\A_w \subseteq \mathcal{F}_{w^p}$. 
\end{lemma}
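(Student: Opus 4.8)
The plan is to reduce the containment to a one-dimensional integral estimate governed by the Dini-type hypothesis $(A_2)$, exploiting the pointwise control on the derivative of $\A_w$-functions recorded in \thref{AwinBw}. Concretely, I will establish the quantitative bound $\norm{f}_{\mathcal{F}_{w^p}} \lesssim \norm{f}_{\A_w}$ for every $f\in\A_w$, which gives the asserted inclusion.

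First I would invoke \thref{AwinBw} to obtain a universal constant $C>0$ with $\abs{f'(z)} \leq C\norm{f}_{\A_w}\,w(1-|z|)/(1-|z|)$ for all $z\in\D$. Dividing by $w(1-|z|)^p$ and integrating over $\D$ then yields
\[
\int_{\D} \abs{f'(z)}\,\frac{dA(z)}{w(1-|z|)^p} \;\leq\; C\norm{f}_{\A_w} \int_{\D} \frac{w(1-|z|)^{1-p}}{1-|z|}\, dA(z),
\]
so, since the term $\abs{f(0)}\leq\norm{f}_{H^\infty}\leq\norm{f}_{\A_w}$ is harmless, it only remains to show that the last integral is finite. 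Passing to polar coordinates and discarding the factor $r\leq 1$, this integral is dominated by a constant multiple of $\int_0^1 w(t)^{1-p}\,t^{-1}\,dt$ (after the substitution $t=1-r$). To estimate it, I would split at a point $t_0\in(0,1]$ with $w\leq 1$ on $[0,t_0]$, which exists since $w$ is continuous with $w(0)=0$: on $[t_0,1]$ the integrand is bounded because $w$ is bounded and $t\geq t_0>0$, while on $(0,t_0]$ the inequality $1-p>\alpha$ — which is precisely where the hypothesis $p<1-\alpha$ is used — together with $w\leq 1$ gives $w(t)^{1-p}\leq w(t)^{\alpha}$, whence $\int_0^{t_0} w(t)^{1-p}\,t^{-1}\,dt \leq \int_0^1 w^{\alpha}(t)\,t^{-1}\,dt <\infty$ by $(A_2)$. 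Combining the two pieces closes the estimate.

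I do not expect a genuine obstacle here: the argument is essentially a one-line application of \thref{AwinBw} followed by the Dini bound of $(A_2)$. The only point requiring slight care is the portion of $\D$ where $w$ might exceed $1$ (if $w(1)>1$), but since $(A_2)$ constrains only the behaviour of $w$ near the origin, that region contributes a bounded integrand and is dispatched trivially; alternatively one may normalize $w$ so that $w(1)\leq 1$, which affects the $\A_w$- and $\mathcal{F}_{w^p}$-norms only by multiplicative constants and removes the need to split at all.
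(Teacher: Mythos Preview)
Your argument is correct and coincides with the paper's own proof: both invoke \thref{AwinBw} to bound $|f'(z)|$ by $C\norm{f}_{\A_w}\,w(1-|z|)/(1-|z|)$, reduce the $\mathcal{F}_{w^p}$-integral to $\int_0^1 w(t)^{1-p}\,t^{-1}\,dt$, and conclude finiteness from $1-p>\alpha$ together with $(A_2)$. Your splitting at $t_0$ to handle the possibility $w(1)>1$ is a harmless extra bit of care that the paper omits.
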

\begin{proof}
Pick an arbitrary function $f\in \A_{w}$. According to \thref{AwinBw}, we have 
\[
\abs{f'(z)} \leq C \norm{f}_{\A_w} \frac{w(1-|z|)}{1-|z|}, \qquad z\in \D.
\]
Using this we get
\[
\int_{\D} \abs{f'(z)} \frac{dA(z)}{w(1-|z|)^p} \leq C \norm{f}_{\A_w} \int_{\D}  \frac{w^{1-p}(1-|z|)}{1-|z|} dA(z) \leq C' \norm{f}_{\A_w} \int_{0}^1 \frac{w^{1-p}(t)}{t} dt.
\]
The assumption $\alpha <1-p$ and condition $(A_2)$ ensure that the integral is finite, hence we conclude that $P_+$ maps $\A_{w} \subseteq \mathcal{F}_{w^p}$.

\end{proof}

As we shall see, the proof of \thref{THM:Model} will require a deep result on inner factors of functions in $\A_w$ due to N.A. Shirokov in \cite{shirokov1982zero}, which will allow us to construct $\A_w$-functions in model spaces $K_{S_{\mu}}$ for singular measures $\mu$ supported on sets of finite $w$-entropy. Shirokov's Theorem asserts that for any positive finite singular Borel measure $\mu$ supported in a set $E\subset \T$ of finite $w$-entropy, there exists an outer function $f \in \A_w$ such that $fS_{\mu} \in \A_w$. For our purposes, we shall however need the following simple 
modification which we state for future references.

\begin{thm}[Shirokov] \thlabel{THM:Shirokov} Let $w$ be a modulus of continuity and $E\subset \T$ be a set of finite $w$-entropy. Then for any positive finite singular Borel measure $\mu$ supported on $E$, there exists an outer function $f$ in $\A_w$ such that the product $\conj{f}S_\mu$ belongs to $C_w(\T)$.
\end{thm}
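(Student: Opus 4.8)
The plan is to deduce this from the original Shirokov theorem quoted just before the statement. By that result, applied to the same set $E$ and measure $\mu$, there exists an outer function $g \in \A_w$ such that $g S_\mu \in \A_w$. The idea is to produce the function $f$ in our statement from $g$ by a simple algebraic manipulation that turns a $\bar{f}S_\mu$-type expression into $g S_\mu$. Specifically, I would like to choose $f$ so that $\conj{f} S_\mu$ agrees $dm$-a.e.\ on $\T$ with $g S_\mu$ times a unimodular factor that lies in $C_w(\T)$; the cleanest route is to exploit that on $\T$ one has $\abs{S_\mu(\zeta)} = 1$ for $dm$-a.e.\ $\zeta$, so $\conj{S_\mu(\zeta)} = 1/S_\mu(\zeta)$ there. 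Thus if I set $f := \conj{S_\mu}\, (g S_\mu) = g\abs{S_\mu}^2$ — no, better to work directly: put $h := g S_\mu \in \A_w$ and define $f$ to be the outer function with $\abs{f} = \abs{g}$ on $\T$ (for instance $f = g$ itself, or its outer part). Then $\conj{f} S_\mu$ has modulus $\abs{g}$ on $\T$ and one checks $\conj{f} S_\mu = \conj{f/S_\mu} \cdot \abs{S_\mu}^2$; since $\abs{S_\mu}=1$ a.e., $\conj{f} S_\mu = \conj{g}\, S_\mu \cdot (\conj{f}/\conj{g})$, and with $f = g$ this is just $\conj{g} S_\mu$. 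So the genuine content is: show $\conj{g} S_\mu \in C_w(\T)$ given that both $g$ and $g S_\mu$ lie in $\A_w$.

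The key computation is the pointwise identity on $\T$:
\[
\conj{g(\zeta)}\, S_\mu(\zeta) = \conj{g(\zeta) \conj{S_\mu(\zeta)}} = \conj{g(\zeta)/S_\mu(\zeta)} = \conj{ \frac{\abs{g(\zeta)}^2}{g(\zeta)}\cdot \frac{g(\zeta) S_\mu(\zeta)}{\abs{g(\zeta)}^2}}\ ,
\]
which is awkward; instead the clean way is to write, for $dm$-a.e.\ $\zeta \in \T$,
\[
\conj{g(\zeta)}\, S_\mu(\zeta) = \overline{\, g(\zeta)\, \conj{S_\mu(\zeta)}\,} = \overline{\, g(\zeta)\, / \, S_\mu(\zeta)\,}.
\]
Now $g/S_\mu$ need not be bounded, so this still requires care. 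The workable approach: observe that $\abs{\conj{g}S_\mu} = \abs{g}$ a.e.\ on $\T$, and for the continuity/modulus-of-continuity estimate, write for $\xi,\zeta \in \T$
\[
\conj{g(\xi)}S_\mu(\xi) - \conj{g(\zeta)}S_\mu(\zeta) = \conj{g(\xi)}\bigl(S_\mu(\xi)-S_\mu(\zeta)\bigr) + \bigl(\conj{g(\xi)}-\conj{g(\zeta)}\bigr)S_\mu(\zeta),
\]
and the second term is controlled by $\norm{g}_{\A_w} w(\abs{\xi-\zeta})$. For the first term I would substitute $S_\mu(\xi) = (gS_\mu)(\xi)/g(\xi)$ whenever $g(\xi)\neq 0$ and use that $gS_\mu \in \A_w$; the difficulty is that $g$ can vanish on $E$ (indeed it must, since $S_\mu$ is not continuous there unless $\mu$ sees $E$). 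The resolution is that on $E$ both $g$ and $gS_\mu$ vanish continuously with modulus of continuity $w$, so $\conj{g}S_\mu$ extends by $0$ to $E$; away from $E$ the function $S_\mu$ is real-analytic, and one patches the local estimates using that $\abs{g(\zeta)} \lesssim w(\textbf{dist}(\zeta,E))^{c}$ — which is exactly the kind of decay the Shirokov construction provides.

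The main obstacle is precisely this boundary-zero bookkeeping near $E$: verifying that the quotient-type manipulation $\conj{g}S_\mu = \overline{(gS_\mu)/|g|^2}\cdot g$ (or whichever normalization one picks) genuinely yields a $C_w(\T)$ function, rather than merely an $L^\infty$ function, when $g$ degenerates on $E$. I expect the honest proof to invoke the finer quantitative output of Shirokov's construction (not just the bare membership $g, gS_\mu \in \A_w$ but the rate $\abs{g(\zeta)} \le w(\textbf{dist}(\zeta,E))^{N}$ for large $N$, analogous to the Carleson function $G_E$ in \thref{Lem:Couter}), and then to combine this polynomial-in-$w$ decay with the Lipschitz-in-$w$ oscillation of $gS_\mu$ to absorb the blow-up of $1/g$. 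Once the estimate $\abs{\conj{g(\xi)}S_\mu(\xi) - \conj{g(\zeta)}S_\mu(\zeta)} \lesssim \norm{g}_{\A_w}\, w(\abs{\xi-\zeta})$ is in hand for $\xi,\zeta$ in a common Whitney arc and for $\xi,\zeta$ straddling $E$, a routine chaining argument over Whitney arcs — identical in spirit to the one in the proof of \thref{wsetcomp} and \thref{Lem:Couter} — upgrades it to all of $\T$, giving $\conj{g}S_\mu \in C_w(\T)$ and completing the proof with $f := g$.
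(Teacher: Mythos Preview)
You start in the right place by taking $f=g$ to be the Shirokov outer function with $g,\,gS_\mu\in\A_w$, but you miss the one-line algebraic trick that dissolves all the ``boundary-zero bookkeeping'' you worry about. Since $|S_\mu(\zeta)|=|S_\mu(\xi)|=1$ for $dm$-a.e.\ $\zeta,\xi\in\T$, conjugation followed by multiplication by the unimodular quantity $S_\mu(\zeta)S_\mu(\xi)$ gives
\[
\bigl|\,\conj{g(\zeta)}S_\mu(\zeta)-\conj{g(\xi)}S_\mu(\xi)\,\bigr|
=\bigl|\,g(\zeta)S_\mu(\xi)-g(\xi)S_\mu(\zeta)\,\bigr|.
\]
Now add and subtract $g(\xi)S_\mu(\xi)$ and $g(\zeta)S_\mu(\zeta)$:
\[
g(\zeta)S_\mu(\xi)-g(\xi)S_\mu(\zeta)
=\bigl(g(\zeta)-g(\xi)\bigr)S_\mu(\xi)-\bigl((gS_\mu)(\zeta)-(gS_\mu)(\xi)\bigr)+\bigl(g(\zeta)-g(\xi)\bigr)S_\mu(\zeta),
\]
so the modulus is bounded by $2|g(\zeta)-g(\xi)|+|(gS_\mu)(\zeta)-(gS_\mu)(\xi)|\lesssim w(|\zeta-\xi|)$ using only $g\in\A_w$ and $gS_\mu\in\A_w$. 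That is the entire proof in the paper.

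Your decomposition $\conj{g(\xi)}\bigl(S_\mu(\xi)-S_\mu(\zeta)\bigr)+\bigl(\conj{g(\xi)}-\conj{g(\zeta)}\bigr)S_\mu(\zeta)$ isolates the naked increment $S_\mu(\xi)-S_\mu(\zeta)$, which is indeed uncontrolled, and then you are forced to compensate by dividing by $g$ and invoking quantitative decay $|g(\zeta)|\le w(\textbf{dist}(\zeta,E))^N$ plus Whitney chaining. That route might be made to work, but it requires strictly more than the bare output of Shirokov's theorem as stated (only membership $g,gS_\mu\in\A_w$), and as written it is a plan rather than a proof. The unimodular trick above avoids ever separating $S_\mu$ from $g$, so no division, no decay rates, and no chaining are needed.
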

\begin{proof}
Let $f$ be the outer function in $\A_w$ that satisfies $fS_\mu \in \A_w$ from Shirokov's Theorem. Now note that if $\zeta, \xi \in \T$, then by triangle inequality
\begin{equation*}
\abs{\conj{f(\zeta)}S_\mu (\zeta) -  \conj{f(\xi)}S_\mu (\xi) } = \abs{ f(\zeta)S_\mu(\xi) - f(\xi)S_\mu(\zeta)} \leq 
2\abs{f(\zeta)-f(\xi)} + \abs{fS_\mu(\xi) - fS_\mu(\zeta) }. 
\end{equation*}
The claim now readily follows from the fact that both $f$ and $fS_\mu$ belong to $\A_w$. 
\end{proof}



\subsection{Approximation in model spaces}

\begin{proof}[Proof of \thref{THM:Model}]
The proof will be divided in different steps.
\proofpart{1}{Existence of $\A_w$ functions:}
We first prove that if $\mu$ does not charge any set of finite $w$-entropy, then $\A_w \cap K_{S_{\mu}} = \{0\}$. Observe that $E$ has finite $w$-entropy if and only if it is finite $w^p$-entropy for any $p>0$, hence by \thref{Thm:GenKR} $S_{\mu}$ is cyclic in $\G_{w^p}$ for any $p>0$. 
Now by \thref{cyclicmodel} this is equivalent to 
\[
(\G_{w^p})' \cap K_{S_\mu} =\{0\}, \qquad p>0.
\]
Choosing $0<p<1-\alpha$ where $0<\alpha <1$ is as in condition $(A_2)$, it now follows from the containment $\A_w \subseteq \mathcal{F}_{w^p}$ in \thref{Awdual} that $\A_w \cap K_{S_\mu}=\{0\}$ as well.

Conversely, suppose that $\mu$ charges a $w$-set $E\subset \T$ and set $\nu := \mu \lvert_E$. We first show that there exists a non-trivial function $f\in \A_w \cap K_{S_\nu}$. That done, part $(i)$ of \thref{THM:Model} is established. Again, observe that since $E$ is a $w$-set is also a $w^p$-set for any $p>0$ and recall that from assumption $(A_2)$ there exists a number $0<\alpha<1$ such that $w^{\alpha +1}$ is a modulus of continuity and $w^{\alpha}$ satisfies the Dini-condition. According to \thref{THM:Shirokov} there exists an outer function $f:\D \to \D$ in $\A_{w^{\alpha +1}}$ such that $\conj{f}S_{\mu} \in C_{w^{\alpha +1}}(\T)$. In particular, if $k_{\lambda}(\zeta) := 1/(1-\conj{\zeta}\lambda)$ denotes the Cauchy kernels with $\lambda \in \D$, we have that each function $g_\lambda :=\conj{f}S_{\nu}k_{\lambda}$ belongs to $C_{w^{\alpha +1}}(\T)$. According to \thref{P+reg}, it follows that $P_+(g_\lambda) \in \A_w$, for each $\lambda \in \D$. Now since Toeplitz operators with co-analytic symbol $f$ preserve smooth functions on $\T$, we have that $T_{\conj{f}}(k_\lambda)$ is also smooth on $\T$, hence in particular belongs to $\A_w$. Now if $\kappa_{S_\nu}(z, \lambda) = \frac{1-\conj{S_\nu(\lambda)}S_{\nu}(z)}{1-\conj{\lambda} z}$ with $z,\lambda \in \D$ denotes the reproducing kernels of the model space $K_{S_\nu}$, we conclude that for any $\lambda \in \D$,
\[
T_{\conj{f}} (\kappa_{S_\nu}(\cdot, \lambda) ) (z) = T_{\conj{f}}(k_{\lambda})(z)  - \conj{S_\nu(\lambda)} P_+(g_\lambda )(z)
\]
is function belonging to $\A_w \cap K_{S_\nu}$. This establishes that $\mu$ does not charge any $w$-set if and only if $\A_w \cap K_{S_\mu}= \{0\}$.

\proofpart{2}{Density of $\A_w$ functions:}
We now turn to proof of the statement about density. To this end, we first establish the claim that the linear span of $\{T_{\conj{f}} (\kappa_{S_\nu}(\cdot, \lambda) )\}_{\lambda \in \D}$ defined in the previous paragraphs is in fact a dense subset of $K_{S_\nu}$. To see this, let $g\in K_{S_\mu}$ be an annihilator of set, that is 
\[
\int_{\T} f(\zeta)g(\zeta) \conj{ \kappa_{S_\nu}(\zeta, \lambda)} dm(\zeta) = 0, \qquad \forall \lambda \in \D.
\]
But this implies that $fg \in (K_{S_\nu})^\perp = S_{\nu}H^2$, and since $f$ is outer, we conclude that the function $g \in K_{S_\nu} \cap S_\nu H^2 = \{0\}$. This proves that $\A_w \cap K_{S_\nu}$ is dense in $K_{S_\nu}$. In fact, since any finite Blaschke product $B_0$ extends analytically across $\T$, the same argument as before actually shows that 
$\A_w \cap K_{B_0 S_\nu}$ is dense in $K_{B_0 S_\nu}$
for any finite Blaschke product $B_0$ and singular measure $\nu$ supported on a $w$-set. 

\proofpart{3}{$\A_w \cap K_{BS_{\mu_P}}$ dense in $K_{BS_{\mu_P}}$:}

To pass to the general claim, fix an arbitrary Blashcke product $B$ and let $\{B_N\}_N$ denote a sequence of finite truncations of $B$ which converge to $B$ uniformly on compact subsets of $\D$. Recall that by definition of $\mu_P$, there exists an increasing union of sets $\{E_N\}_N$ each having finite $w$-entropy (finite unions of sets with finite $w$-entropy again have finite $w$-entropy), such that $\nu_N := \mu_{E_N}$ increase up to $\mu_P$ in total variation norm. We now form the sequence of inner function $\{\Theta_N\}_N$ defined by $\Theta_N = B_N S_{\nu_N}$, and easily verify that $\Theta_N$ converges to $BS_{\mu_P}$ pointwise on $\D$ and that each $\Theta_N$ is by construction a divisor of $BS_{\mu_P}$. By the development in Step 2, $\A_w \cap K_{\Theta_N}$ is dense in $K_{\Theta_N}$ for each $N$, hence invoking \thref{absdens} we conclude that $\A_w \cap K_{BS_{\mu_P}}$ dense in $K_{BS_{\mu_P}}$, which establishes one direction of the statement about density. 

\proofpart{4}{Showing that the closure of $\A_w \cap K_{\Theta}$ equals $K_{BS_{\mu_P}}$:} It remains to deduce the statement phrased in the heading of this step. Write $\Theta= \Theta_P \Theta_C$ where $\Theta_P := BS_{\mu_P}$ and $\Theta_C := S_{\mu_C}$ and recall that a simple computation involving reproducing kernels gives the following orthogonal decomposition in $K_\Theta$ 
\[
K_{\Theta}= K_{\Theta_P} \oplus \Theta_P K_{\Theta_C}.
\]
Now pick an arbitrary element $g \in H^\infty \cap K_{\Theta_C}$ (for instance, any reproducing kernel in $K_{\Theta_C}$ will do) and note that by $(i)$ of \thref{Thm:GenKR}, we can find a sequence of analytic polynomials $\{Q_n\}_n$ such that $\Theta_C Q_n $ converges to $g$ in $\G_{w^p}$ for any $p>0$. Multiplying by $\Theta_P$, we conclude that $\Theta Q_n$ converges to $\Theta_P g$ in $\G_{w^p}$. Now using \thref{Awdual}, we get that $\A_w \subset (\G_{w^p})'$ for $0<p<1-\alpha$, and hence for any $f\in \A_w \cap K_{\Theta}$ we have
\[
\int_{\T} \Theta_P(\zeta) g(\zeta) \conj{f(\zeta)} dm(\zeta) = \lim_{n} \int_{\T} \Theta(\zeta) Q_n(\zeta) \conj{f(\zeta)} dm(\zeta) = 0.
\]
This argument shows that for any $g \in H^\infty \cap K_{\Theta_C}$ the product $\Theta_P g$ is contained in $\left( \A_w \cap K_{\Theta} \right)^{\perp}$. Taking closures and using the fact that reproducing kernels are dense, we obtain the containment $(K_{\Theta_P})^{\perp} = \Theta_P K_{\Theta_C} \subseteq \left( \A_w \cap K_{\Theta} \right)^{\perp}$, which implies that the closure of $\A_w \cap K_{\Theta}$ in $K_\Theta$ is contained in $K_{\Theta_P}$. Using this in conjunction with the previously established claim that $\A_w \cap K_{\Theta_P} \subseteq \A_w \cap K_{\Theta}$ is dense in $K_{\Theta_P}$, we actually conclude that the closure of $\A_w \cap K_\Theta$ equals $K_{\Theta_P}$. This completes the proof of the theorem.

\end{proof}

\bibliographystyle{siam}
\bibliography{mybib}


\end{document}